\newtheorem{thm}{Theorem}[section]
\newtheorem{lem}[thm]{Lemma}
\newtheorem{prop}[thm]{Proposition}
\theoremstyle{definition}
\theoremstyle{definition}
\theoremstyle{definition}
\theoremstyle{definition}
\newtheorem{rem}[thm]{Remark} \numberwithin{equation}{section}
\newcommand{\F}{\mathcal{F}}
\newcommand{\K}{\mathcal{K}}
\newcommand{\R}{\mathbb R}
\newcommand{\E}{\mathbb E}
\newcommand{\T}{\mathbb T}
\def\P{\mathbb P}
\def\1{\mathbb I}
\def\e{\epsilon}
\def\g{\gamma}
\def\l{\lambda}
\def\o{\omega}
\def\ov{\overline}
\definecolor{vert}{rgb}{0,0.4,0}
\begin{document}
\title[Large time behavior of weakly coupled systems]{
{Some results on the large time behavior of weakly 
coupled systems of first-order
Hamilton-Jacobi equations}}

\author{Vinh Duc Nguyen}
\address{IRMAR, INSA de Rennes, 35708 Rennes, France} 
\email{vinh.nguyen@insa-rennes.fr}

\begin{abstract}
Systems of Hamilton-Jacobi equations arise naturally when we study optimal control problems
 with pathwise deterministic
trajectories with random switching. In this work, we are interested in the large time behavior
of  weakly coupled systems of first-order
Hamilton-Jacobi equations in the periodic setting. First results have been obtained by Camilli-Loreti-Ley and the author (2012) and Mitake-Tran (2012) under quite strict conditions. Here, we use a PDE approach to extend the convergence result
proved by Barles-Souganidis (2000) in the scalar case. This result permits us to treat general cases, for instance, systems of nonconvex Hamiltonians and systems of strictly convex Hamiltonians. We also obtain some other convergence results under different assumptions. These results give a clearer view on the large time behavior for systems of Hamilton-Jacobi equations.
\end{abstract}

\maketitle



\section{Introduction}

\subsection{Statement of the problem and recalls of the existing results}\label{intro1}

We study the large time behavior of systems of Hamilton-Jacobi equations
\begin{eqnarray}\label{HJEi}
&& \left\{
\begin{array}{ll}
\displaystyle\frac{\partial u_i}{\partial t}
+  H_i(x, Du_i )+\sum_{j=1}^{m}d_{ij}u_j=0 &
(x,t)\in \T^N\times (0,+\infty),\\[3mm]
u_i(x,0)=u_{0i}(x) & x\in\T^N,
\end{array}
\right.
 i=1,\dots,m,
\end{eqnarray}
where $\T^N$ is the $N$-dimensional flat torus, $H_i\in C(\T^N \times \R^N),~~u_{0i}\in C(\T^N)$, and the coupling $D=(d_{ij})_{i,j=1,\dots,m}$ is  monotone, i.e.,
\begin{eqnarray}\label{S3}
d_{ii}\geq 0, \quad d_{ij}\leq 0 \ {\rm for} \ i\not= j\quad {\rm and}  \quad
\sum_{j=1}^{m}d_{ij}=0~~\text{for  all $i$}
\end{eqnarray}
and the system is fully coupled in some sense which will be precised later (see \ref{irr}).

The aim of this work is to improve the first results obtained by
Camilli-Ley-Loreti and the author \cite{clln12} and Mitake-Tran \cite{mt12}
and, more generally, to generalize to systems of the form~\eqref{HJEi},
the existing results for the case of a single Hamilton-Jacobi equation
\begin{equation}\label{HJE}
\left\{
  \begin{array}{ll}
\frac{\partial u}{\partial t}+  H(x, Du)=0,
   & (x,t)\in \T^N\times (0,+\infty),\\[5pt]
 u(x,0)=u_{0}(x) & x\in\T^N.
  \end{array}
\right.
\end{equation}
It is convenient to start by recalling the existing results for~\eqref{HJE}, and the first results of ~\cite{clln12,mt12} for systems and then to turn to our results.

The large time behavior has been extensively investigated using both PDE methods and dynamical approaches.
The desired result is to find a unique constant $c\in\R, $ the so-called critical value or ergodic constant, and a solution $v$ of the stationary equation
\begin{eqnarray}\label{ergo_sing}
H(x, Dv )=c \quad \text{in } \T^N
\end{eqnarray}
such that 
\begin{eqnarray}\label{cvHJ}
u(x,t)+ct \to v(x)~~\text{uniformly as $t$ tends to infinity}.
\end{eqnarray}

\noindent The first results are of Fathi \cite{fathi98} and Namah-Roquejoffre \cite{nr99} where the convexity of Hamiltonians plays a key role. The result of \cite{fathi98} was proved for uniformly convex Hamiltonians, i.e., there exists a constant $\alpha >0$ such that 
$D^2_{pp}H(x,p) \ge \alpha I,~~\text{for all}~~(x,p) 
\in \T^N \times \mathbb{R}^N.$

In Davini-Siconolfi~\cite{ds06}, the authors extended this result to strictly convex Hamiltonians, i.e., for any $0<\lambda<1,~~x\in \T^N,~~p \neq q$, we have
\begin{eqnarray}\label{strictconvex}
H(x,\lambda p+(1-\lambda)q) < \lambda H(x,p)+(1-\lambda)H(x,q).
\end{eqnarray}

In \cite{nr99}, the result was proved for Hamiltonians of the form 
\begin{eqnarray*}
H(x,p)=F(x,p)-f(x),\quad {\rm with} \ F(x,p)\geq F(x,0)=0,
\end{eqnarray*}
and $F$ is coercive and convex with respect to $p.$ 
In this framework, the set
\begin{eqnarray}\label{Fscalar}
\F_{scalar}=\{x_0\in\T^N:\,f(x_0)={\rm min}_{x \in \T^N}f(x)\}
\end{eqnarray}
plays a crucial role.
It appears to be a {\it uniqueness set} for the stationary 
equation~\eqref{ergo_sing}, i.e., the solution of \eqref{ergo_sing} 
is uniquely characterized by its value on this set.
\smallskip

Barles and Souganidis~\cite{bs00} succeeded in relaxing a bit the convexity condition on $H$.
Under the two following sets of assumptions, the first generalizing ~\cite{nr99} and the second generalizing ~\cite{fathi98},
they obtain the convergence~\eqref{cvHJ}. From now, we always denote ${\rm dist}(x,K)$ the usual distance function which is defined to be  plus infinity if $K$ is empty, and $\psi(\eta)$ is a positive constant depending only on $\eta$.
Assume that \eqref{ergo_sing} is solved for $c=0$ and we introduce the assumptions on $H$ (in general, the assumptions are made on $H-c$)
\begin{eqnarray}\label{H5}
&& \left\{
\begin{array}{ll}
\text{$(i)~~|H(x,p)-H(y,p)| \le m(|x-y|(1+|p|))$,}\\
\text{\quad \quad where $m$ is a nonnegative function such that $m(0^{+})=0$.}\\
\text{$(ii)~~$There exists a, possibly empty, compact subset $K$ of $\mathbb{T}^{N}$ such that} \\  
\text{$\quad (a)~~H(x,p) \ge 0$ ~~\text{on} ~~$K \times \mathbb{R}^{N}$,}\\
\text{$\quad (b)~~$If ${\rm dist}(x,K) \ge \eta >0$, $H(x,p+q)\ge \eta$ and $H(x,q)\le 0$},\\
\text{\quad \quad \quad we have $H_p(x,p+q)p-H(x,p+q)\ge \psi(\eta)>0$}.
\end{array}
\right.
\end{eqnarray}
or
\begin{eqnarray}\label{H7}
&& \left\{
\begin{array}{l}
\text{$(i)~~$The function $p \mapsto H(x,p)$ is differentiable a.e. in $x \in \T^{N}$,}\\
\text{$(ii)~~$There exists a, possibly empty, compact set $K$ of $\mathbb{R}^{N}$ such that}\\
\text{$\quad (a)~~H(x,p) \ge 0$ on $K \times \mathbb{R}^{N}$ ,}\\
\text{$\quad (b)$~~If $H(x,p) \ge \eta > 0$ and ${\rm dist}(x,K) \ge \eta $,}\\
\text{\quad \quad \quad we have $H_{p}(x,p)p-H(x,p) \ge \psi(\eta)>0$}.
\end{array}
\right.
\end{eqnarray}


Let us mention Fathi \cite{fathi10}, Roquejoffre \cite{roquejoffre01} for other related results in the periodic setting. Some of these results have been also extended beyond the periodic setting, see Barles and Roquejoffre \cite{br06}, Ishii \cite{ishii08},
Ichihara and Ishii \cite{ii09}, and for problems with periodic boundary conditions, see for instance 
Mitake~\cite{mitake08b, mitake08a, mitake09}. We refer also the reader to
Ishii~\cite{ishii06, ishii09} for an overview.
\smallskip

In the case of systems, we are interested in finding an ergodic constant vector 
$(c_1,\dots,c_m)\in \R^m$ and a function $(v_1,\dots,v_m)$ such that
\begin{eqnarray}\label{HJEista}
H_i(x, Dv_i )+\sum_{j=1}^{m}d_{ij}v_j=c_i,~~x \in \T^N,~~i=1,\dots,m
\end{eqnarray}
and, for all $i=1,\dots,m,$
\begin{eqnarray}\label{tosim_con}
u_i(x,t)+c_it \to v_i(x)~~\text{uniformly as $t$ tends to infinity,}
\end{eqnarray}
 where $u$ is the solution of \eqref{HJEi}.
\smallskip

First results for the system~\eqref{HJEi} were obtained in~\cite{clln12} and~\cite{mt12}. It was proved that \eqref{HJEista} has a solution $((c_1,\dots,c_1),v) \in \R^m \times W^{1,\infty}(\T^N)^m$ for convex, coercive Hamiltonians. By the way, we extend this result to systems with a coupling matrix depending on $x$ (see Theorem \ref{ergo3}). The convergence \eqref{tosim_con} was obtained when
$H_i(x,p)=F_i(x,p)-f_i(x)$ 
satisfying the 
same properties as in~\cite{nr99} (see above and~\eqref{coco}). The set $\F_{scalar}$ defined in \eqref{Fscalar} was replaced by
\begin{eqnarray}\label{same}
\F=\{x_0\in \T^N:f_i(x_0)=\min_{x \in \T^N}f_j(x)~~\text{for all $i, j=1,\dots,m$}\}.
\end{eqnarray} 
One of the main (and restrictive) assumption was to suppose that $\F \neq  \emptyset$, i.e., {\it the $f_i$'s attain their minima at the same point with the same value.} Then, they proved that $\F$ is also a uniqueness set to derive the convergence. In this case, the interpretation of the convergence 
in terms of optimal control of pathwise deterministic trajectories with random switching (see \cite[Section 6]{clln12} and Appendix) is clear. One should rather drive the trajectories to a common minimum of the $f_i$'s
since these latters play the role of the running costs of the control problem.

The extension of such a result to the case $\F = \emptyset$
was the most challenging issue
which was addressed in~\cite{clln12} and one of the motivation of this paper.

\subsection{Main results}\label{main}
We need some conditions on the coupling. The irreducibility is a classical assumption when dealing with systems of PDEs. Roughly speaking, when the coupling is irreducible, the system is not separated into many smaller systems. 
\begin{eqnarray}\label{irr}
&&
\begin{array}{l}
\text{$D$ is irreducible: if $\forall~ \mathcal{I}\varsubsetneq \{1,\cdots ,m\},$ there exist $i\in \mathcal{I}$, $j\not\in  \mathcal{I}$ such that $d_{ij}\not= 0$.}
\end{array}
\end{eqnarray}
We also need the following assumption
\noindent
\begin{eqnarray} \label{row_nonzero}
\text{ $D$ has a nonzero coefficients line: if there exists $i$ such that }
d_{ij}\not= 0,~~\text{$\forall  j$}.
\end{eqnarray}
We assume that \eqref{HJEi} has a unique solution $u$ such that $u+ct\in W^{1,\infty}(\T^N \times (0,\infty))^m$, and \eqref{HJEista} has a solution $(c,v)\in \R^m \times W^{1,\infty}(\T^N )^m$. By the change of function $u(x,t) \to u(x,t)+ct$ and replacing $H_i$ with $H_i-c_i$, we may assume that $c=0$. We now introduce our assumption which replaces \eqref{H7} (notice that when $c \neq 0$, the assumptions have to be written for $H_i-c_i$). 

For $i=1,\dots ,m$
\begin{eqnarray}\label{H10}
&& \left\{
\begin{array}{ll}
\text{$(i)~~$The function $p \mapsto H_i(x,p)$ is differentiable a.e. in $x \in \T^{N}$.}\\[2mm]
\text{$(ii)~~(H_i)_p~~p-H_i\ge 0$ for  a.e. $(x,p) \in \T^N \times \mathbb{R}^{N}$,}\\[2mm]
\text{$(iii)~~$There exists a, possibly empty, compact set $K$ of $\mathbb{T}^{N}$ such that}\\
\text{$\quad(a)~~H_i(x,p) \ge 0$ on $K \times \R^{N}$ ,}\\
\text{$\quad(b)~~$If $H_i(x,p) \ge \eta > 0$ and ${\rm dist}(x,K) \ge \eta$, then $(H_i)_p~~p-H_i \ge \psi(\eta)>0$.}
\end{array}
\right.
\end{eqnarray}
This  assumption seems to be a natural extension 
of~\eqref{H7} to systems. The reasons to introduce such an assumption is explained in Section \ref{bs1}. But let us make some comments and give some motivations. In general, one does not know the exact value of $c$. However \eqref{H10} can be applied for some important cases: strictly convex Hamiltonians and some nonconvex Hamiltonians.

We are now able to state our main result,
the proof of which is given in Section~\ref{bs1}
\begin{thm}\label{mainresult}{\rm (Main convergence result)}
Suppose that $D$ satisfies \eqref{irr} and \eqref{row_nonzero}, $H_i$ satisfies \eqref{H10}. Then, the solution $u=(u_1,\dots,u_m) \in W^{1,\infty}(\T^N \times (0,\infty))^m$ of~\eqref{HJEi} converges uniformly to a solution $(v_{\infty 1 },\dots,v_{\infty m })$ of \eqref{HJEista} with $(c_1,\dots,c_m)=0$. 
\end{thm}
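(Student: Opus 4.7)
The plan is to follow the structure of the Barles--Souganidis proof for the scalar case, but carried out componentwise, with the coupling and the structural assumptions on $D$ supplying the new ingredients. Since we are in the regime $c=0$, we already have $u=(u_1,\dots,u_m)\in W^{1,\infty}(\T^N\times(0,\infty))^m$ with uniform bounds. Introduce the half-relaxed limits
\[
\underline v_i(x)=\liminf_{\substack{(y,s)\to(x,\infty)}} u_i(y,s),\qquad
\overline v_i(x)=\limsup_{\substack{(y,s)\to(x,\infty)}} u_i(y,s),
\]
which are finite and Lipschitz, and satisfy $\underline v_i\le\overline v_i$. By the standard stability of viscosity solutions (the coupling is linear and continuous in $v$), $\overline v=(\overline v_1,\dots,\overline v_m)$ is a subsolution and $\underline v$ a supersolution of \eqref{HJEista} with $c=0$. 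The goal is to show $\overline v=\underline v$ and that convergence in~\eqref{tosim_con} actually holds, not merely in a half-relaxed sense.

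The core step is to extend the scalar asymptotic monotonicity argument of Barles--Souganidis to systems. In the scalar setting, one takes a test function at a maximum of $\overline v-\underline v$ (after suitable doubling of variables) and uses the strict inequality $H_p\cdot p-H\ge\psi(\eta)>0$ from \eqref{H10}(iii)(b), valid outside a neighborhood of $K$ and where $H\ge\eta>0$, to derive a contradiction from the sub/super solution inequalities. In the system, one tests with a sum $\sum_i\lambda_i(\overline v_i-\underline v_i)$ for positive weights $\lambda_i$ and picks the index $i_0$ where the maximum is realized: the Hamiltonian contribution is handled exactly as in Barles--Souganidis (assumption \eqref{H10} was tailored for this), while the coupling contribution is
\[
\sum_{j=1}^m d_{i_0 j}\bigl(\overline v_j(x_0)-\underline v_j(x_0)\bigr),
\]
which has a definite sign: because $D$ is monotone, $d_{i_0 i_0}\ge0$ and $d_{i_0 j}\le0$ for $j\ne i_0$, and $\overline v_j-\underline v_j\ge 0$ with $\overline v_{i_0}-\underline v_{i_0}$ maximal, this quantity is nonnegative and hence does not spoil the scalar argument. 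The nonzero-line condition \eqref{row_nonzero} guarantees that for one distinguished $i_0$ the coupling is \emph{strictly} active on every other component, producing genuine strict positivity when any $\overline v_j>\underline v_j$ at $x_0$, which is what gives equality $\overline v_{i_0}(x_0)=\underline v_{i_0}(x_0)$ and, by an irreducibility bootstrap on the "coincidence set" $\mathcal I=\{i:\overline v_i=\underline v_i\}$, equality on every index: if $\mathcal I\subsetneq\{1,\dots,m\}$, \eqref{irr} produces $i\in\mathcal I$ and $j\notin\mathcal I$ with $d_{ij}\ne0$, and repeating the test-function argument at index $i$ forces $j$ to join $\mathcal I$.

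Once $\underline v=\overline v=:v_\infty$, the half-relaxed limits collapse, giving uniform convergence $u_i(\cdot,t)\to v_{\infty i}$ and the fact that $v_\infty$ solves~\eqref{HJEista} with $c=0$. The step I expect to be the main obstacle is the doubling-of-variables/test-function computation in the system setting: one must carefully combine the scalar Barles--Souganidis machinery (which is delicate near the compact set $K$ where the strict inequality in \eqref{H10}(iii)(b) degenerates) with the coupling, making sure that the sign of the coupling contribution is preserved after regularization and localization, and that the maximum index $i_0$ can be taken to lie in the row provided by \eqref{row_nonzero} when needed. The irreducibility bootstrap is standard but requires that the scalar-type argument yield not only $\overline v_{i_0}-\underline v_{i_0}=0$ at the maximum but in fact identically, which is itself done by a further comparison within the same framework.
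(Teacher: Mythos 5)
The route you propose --- pass to half-relaxed limits $\underline v,\overline v$, show they are a super- and subsolution of the stationary system~\eqref{HJEista}, then conclude $\overline v=\underline v$ by a doubling-of-variables comparison exploiting the sign structure of $D$ --- cannot work, and is in fact structurally different from what the paper does. The obstacle is that the ergodic/stationary problem~\eqref{HJEista} (with $c=0$) has \emph{no} comparison principle: its solution set is non-trivial (at least a one-parameter family $v+\lambda(1,\dots,1)$, and typically larger). If comparison between a subsolution $\overline v$ and a supersolution $\underline v$ were available, it would force uniqueness of the stationary solution up to nothing, which is false. Concretely, run your own doubling argument at a global maximum over $i$ and $x$ of $\overline v_i-\underline v_i$: the two Hamiltonian terms cancel (same gradient), and using $\sum_j d_{i_0 j}=0$ the coupling contribution becomes
\begin{equation*}
\sum_{j\neq i_0}(-d_{i_0 j})\Bigl[(\overline v_{i_0}-\underline v_{i_0})(x_0)-(\overline v_j-\underline v_j)(x_0)\Bigr],
\end{equation*}
which is $\ge0$ by maximality and $\le0$ by the viscosity inequalities, hence $=0$. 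This only propagates the value of the maximum across indices; it never forces that value to be zero, and assumption~\eqref{H10}(iii)(b) ($H_p\cdot p-H\ge\psi(\eta)$) never enters because at the comparison touching point the gradients coincide so $H$ cancels rather than being evaluated at two different levels. The irreducibility bootstrap on the ``coincidence set'' you describe therefore yields only that $\overline v-\underline v$ is a constant vector, not that it is the zero vector --- which is precisely the trivial non-uniqueness of~\eqref{HJEista}.

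The paper avoids this trap by arguing about time-monotonicity, not stationary comparison. After the exponential change of variable $\exp(w_i)=u_i$ (which you omit, and which is what converts~\eqref{H10}(ii) into the crucial zeroth-order monotonicity $(F_i)_w\ge0$ for the new Hamiltonians $F_i$), it introduces the functional $P_\eta[\varphi](t)=\sup_{x,\,s\ge t}[\varphi(x,t)-\varphi(x,s)-2\eta(s-t)]$ measuring the failure of $\varphi$ to be nondecreasing at rate $-2\eta$. The technical heart, Theorem~\ref{IM1}(i), is the \emph{same-maximizer} property: the $P_\eta[v_i]$'s are all equal and, for each $\tau$, are all attained at one common $(x_\tau,s_\tau)$. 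This is the system-specific idea replacing your comparison step, it is where~\eqref{row_nonzero} and $(F_i)_w\ge0$ are really used, and it allows one to write down the viscosity inequalities for the \emph{same} component at the \emph{same} point, after which~\eqref{H13}(ii) (the transform of~\eqref{H10}(iii)(b)) gives the strict gain $\psi(\eta)c(\eta)\le0$, hence $c(\eta)=0$. From $P_\eta[v_i]\equiv0$ one gets $v_i$ nondecreasing in $t$, hence convergent, and the uniform convergence follows from the sandwich argument (Steps 2--4 of the proof). Your proposal, while using the correct sign structure of $D$ at one point, replaces this asymptotic monotonicity machinery with a stationary comparison that has no chance of closing.
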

In general, the existence of a Lipschitz continuous solution of~~\eqref{HJEi} is  followed by the coercivity of the Hamiltonians (see Theorem~\ref{ergo3}). 

Comparing~\eqref{H10} and~\eqref{H7},
we realize that~\eqref{H10} (ii) is the only additional assumption, which is crucial
in the proof of~\eqref{main_rough4} (see Theorem~\ref{IM1} $(i)$). 
The important examples satisfying Theorem~~\ref{mainresult} are
nonconvex Hamiltonians. The following example is drawn from \cite{bs00}: $H_i(x,p)=\psi_i(x,p)F_i(x,\frac{p}{|p|})-f_i(x)$,
where $f_i \in C(\T^N)$ is nonnegative, $F_i\in C(\T^N \times \R^N)$ is
strictly positive, and $\psi_i(x,p)=|p+q_i(x)|^2-|q_i(x)|^2$ with $q_i\in C(\T^N; \R^N)$. Moreover, we assume that
$K=\{x\in \T^N : f_i(x)=|q_j(x)|=0~~\text{for all $i, j=1,\dots,m$}\} \neq \emptyset.$
Under these conditions, we can show that $c=(0,\dots,0)$ (see \cite[Lemma 4.1, Theorem 4.2]{clln12} for the same calculations) and since
\begin{eqnarray*}
(H_i)_p(x,p)~~p-H_i(x,p)=|p|^2F_i(x,\frac{p}{|p|})+f_i(x),
\end{eqnarray*} 
it is clear that $H_i$ satisfies \eqref{H10} with $K$ defined as above.

Sometimes, we cannot use directly \eqref{H10}. The following example is very typical from mechanic. Notice that $f_1,~~f_2$ may not attain their minima at the same point.
\begin{equation}\label{diff_mini}
\left\{
  \begin{array}{ll}
\frac{\partial u_1}{\partial t}+|Du_1|^2+u_1-u_2=f_1(x),
   & \\[5pt]
 \frac{\partial u_2}{\partial t}+|Du_2|^2+u_2-u_1=f_2(x).
  \end{array}
\right.
\end{equation}
However, and it is one of the main achievement of this paper,
we will see below how our main result Theorem \ref{mainresult} can be applied 
to give a full answer to this problem, see~Theorem \ref{exist_smoo}. Roughly speaking, in some cases, Theorem \ref{mainresult} will not be applied directly but after a change of Hamiltonians. The following result is an important application of Theorem~~\ref{mainresult}
for systems of strictly convex Hamiltonians.
\begin{thm}\label{exist_smoo}
Suppose that $D$ satisfies 
\eqref{irr} and \eqref{row_nonzero},
$H_i$ satisfies \eqref{strictconvex} and coercive in $p$ uniformly in $x\in \T^N$. 
Then, there exists
$c=(c_1,...,c_1)$ and  a solution $v \in W^{1,\infty}(\T^N)^m$ of \eqref{HJEista} 
such that
$u+ct \to v $ in $C(\T^N)^m$, where $u$ is the solution of \eqref{HJEi}.
\end{thm}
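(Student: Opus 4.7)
The plan is to apply Theorem~\ref{mainresult} after a \emph{change of Hamiltonians} that resets the critical vector to zero and puts the system into the form required by hypothesis~\eqref{H10}. First, since each $H_i$ is convex (being strictly convex in the sense of~\eqref{strictconvex}) and coercive, Theorem~\ref{ergo3} yields an ergodic vector $c=(c_1,\ldots,c_1)\in\R^m$ with all components equal to some $\bar c$, together with a Lipschitz solution $v=(v_1,\ldots,v_m)\in W^{1,\infty}(\T^N)^m$ of~\eqref{HJEista}. Since $\sum_j d_{ij}=0$, we have $\sum_j d_{ij}c_j=\bar c\sum_j d_{ij}=0$. I would then set $w_i(x,t)=u_i(x,t)+c_i t-v_i(x)$ and, substituting $u_i=w_i-c_i t+v_i$ into~\eqref{HJEi} and using the stationary equation, verify that $w=(w_1,\ldots,w_m)$ solves
\begin{equation*}
\partial_t w_i+\tilde H_i(x,Dw_i)+\sum_{j=1}^m d_{ij}w_j=0,\qquad w_i(x,0)=u_{0i}(x)-v_i(x),
\end{equation*}
with modified Hamiltonians $\tilde H_i(x,p)=H_i(x,p+Dv_i(x))-H_i(x,Dv_i(x))$. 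In particular $\tilde H_i(x,0)=0$ and the ergodic constants of the transformed system all vanish.

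Next I would verify that each $\tilde H_i$ satisfies~\eqref{H10} with $K=\emptyset$. Strict convexity in $p$ and uniform coercivity are inherited from $H_i$ (using $Dv_i\in L^\infty$). Convexity together with $\tilde H_i(x,0)=0$ immediately yields, at points of differentiability, $(\tilde H_i)_p\cdot p-\tilde H_i\ge 0$, i.e.~condition~(ii). For condition~(iii) only the quantitative implication ``$\tilde H_i(x,p)\ge\eta\Rightarrow(\tilde H_i)_p\cdot p-\tilde H_i\ge\psi(\eta)>0$'' has to be proved. The quantity $\phi(x,p):=(\tilde H_i)_p\cdot p-\tilde H_i$ coincides with the Fenchel conjugate $\tilde H_i^{*}(x,(\tilde H_i)_p(x,p))$ and is strictly positive for $p\ne 0$ by strict convexity. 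The uniform positive lower bound then follows from a two-regime argument: on any fixed ball $|p|\le R$, continuity of $\phi$ and compactness of $\T^N$ force a positive minimum over the closed set $\{\tilde H_i\ge\eta\}$ (which is separated from $p=0$ because $\tilde H_i(\cdot,0)=0$); for $|p|\ge R$ large enough, the uniform superlinearity of $\tilde H_i$ forces $\phi$ itself to be large, uniformly in $x\in\T^N$.

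The last step is to invoke Theorem~\ref{mainresult} for the $w$-system, whose Hamiltonians satisfy~\eqref{H10} with vanishing ergodic vector. This yields a stationary solution $\tilde v\in W^{1,\infty}(\T^N)^m$ of the $\tilde H_i$-system and uniform convergence $w(\cdot,t)\to\tilde v$. Translating back, $u(x,t)+ct\to v(x)+\tilde v(x)$ uniformly in $C(\T^N)^m$, and $v+\tilde v$ is easily checked to solve~\eqref{HJEista} with ergodic vector $c$.

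The main obstacle I anticipate is the verification of condition~(iii) of~\eqref{H10}: the pointwise strict convexity~\eqref{strictconvex} is not a uniform property, so the quantitative bound $\psi(\eta)$ requires the combined use of compactness of $\T^N$ and uniform superlinearity as described above, rather than being a direct consequence of~\eqref{strictconvex}. A secondary technical point is condition~(i): mere strict convexity does not guarantee everywhere-differentiability in $p$, so one may need to approximate each $H_i$ by smoother strictly convex coercive $H_i^{\varepsilon}$, apply the argument above for every $\varepsilon>0$, and pass to the limit using the stability of viscosity solutions together with uniform Lipschitz bounds.
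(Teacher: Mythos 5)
Your overall plan — subtract a stationary solution, reduce to a system with vanishing ergodic vector, and invoke Theorem~\ref{mainresult} — is exactly the paper's strategy (see the formal outline after Theorem~\ref{exist_smoo} in the introduction). The gap is in the reduction step, and it is not the one you flagged.

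The stationary solution $v$ from Theorem~\ref{ergo3} is only Lipschitz, so $Dv_i$ is in $L^\infty$ but not continuous. Consequently $\tilde H_i(x,p)=H_i(x,p+Dv_i(x))-H_i(x,Dv_i(x))$ is \emph{not} continuous in $x$, and the $w$-system you wrote down is not a system with Hamiltonians in $C(\T^N\times\R^N)$. This breaks the argument in several places at once: (a) the very change of unknown $w_i=u_i+c_it-v_i$ does not translate cleanly to the viscosity framework when $v$ is only Lipschitz; (b) Theorem~\ref{mainresult} (and the entire apparatus of Section~\ref{bs1}: stability, the doubling-of-variables steps, Proposition~\ref{PrComp}) presupposes continuous Hamiltonians; (c) your two-regime argument for condition~(iii) of~\eqref{H10} relies on continuity of $\phi(x,p)=(\tilde H_i)_p\cdot p-\tilde H_i$ in $x$ and compactness of $\T^N$, which fails for a discontinuous $\tilde H_i$. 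The ``secondary technical point'' you mention (approximating $H_i$ by smoother $H_i^\varepsilon$ to handle $p$-differentiability) is aimed at the wrong object: the obstruction is the regularity of $v$, not of $H_i$; $p$-a.e.~differentiability of convex $H_i$ already holds by Rademacher.

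The paper's actual proof resolves precisely this issue. It mollifies the fixed Lipschitz (sub)solution $V$ to obtain $V^\delta\in C^1(\T^N)^m$ satisfying the approximate subsolution inequality~\eqref{HJEista_ep} (convexity of $H_i$ is used here so that mollification of a subsolution gives an approximate one), performs the exponential change $\exp(w_i^\delta)=u_i-V_i^\delta$, and obtains a system~\eqref{inter1_ep} with continuous coefficients but an extra error of size~$O(\delta)$. It then cannot simply cite Theorem~\ref{mainresult} as a black box: it re-derives a $\delta$-uniform analogue (Lemma~\ref{IM4_ep}) of the key estimates of Theorem~\ref{IM1}, taking care that the constant $\psi(\eta)$ in~\eqref{H13} is independent of $\delta$, and only at the end sends $\varepsilon\to0$ and then $\delta\to0$. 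So the right fix is to insert a mollification layer on $v$ and to carry a quantitative $\delta$-error through the proof of Theorem~\ref{mainresult} — a genuine extra step, not a routine approximation remark.
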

\noindent We provide hereunder some formal ideas so that the reader can see how we use Theorem \ref{mainresult}. The general proof is given in Section \ref{strict}.

We fix a Lipschitz continuous subsolution $V$ of \eqref{HJEista} and the associated ergodic constant $c=(c_1,\dots,c_1)$ (see Theorem~\ref{ergo3}). We assume that $V$ is {\em $C^1$} to perform a formal proof. Set $w_i=u_i+c_1t-V_i$, it is clear that $w$ is the bounded solution of
\begin{eqnarray*}
\displaystyle\frac{\partial w_i}{\partial t}
+ H_i(x, DV_i+Dw_i )-H_i(x, DV_i )-g_i(x)+\sum_{j=1}^{m}d_{ij}w_j=0,~~i=1,\dots,m.
\end{eqnarray*} 
Here,  $g_i(x):=-H_i(x, DV_i )-\sum_{j=1}^{m}d_{ij}V_j+c_1 \ge 0$ and $g_i \in C(\T^N)$ for all $i=1,\dots,m$
since $V$ is a {\em $C^1$} subsolution of \eqref{HJEista}. 
We define the new Hamiltonians $G_i(x,p)=H_i(x, p+DV_i )-H_i(x, DV_i )-g_i(x).$ 
Since $H_i$ is coercive, the solutions $u$ of \eqref{HJEi} and $v$ of \eqref{HJEista} are Lipschitz continuous.
We aim at applying Theorem~\ref{mainresult} to get the desired convergence, it is then sufficient to check
that~\eqref{H10} holds with $K=\emptyset$ and $p$ bounded. This is done thanks to the strict convexity of $H$.


\noindent This theorem extends the result of \cite{ds06} to systems.
It also gives a full answer to the Eikonal type Hamiltonians
case of~\cite{clln12,mt12}: when the Hamiltonians are
strictly convex, one has the convergence 
regardless $\mathcal{F}$ defined in \eqref{same} is empty or not. 
In particular, we can prove the convergence for~\eqref{diff_mini}.

We learnt very recently that Mitake and Tran~\cite{mt12d}
obtained the same result as Theorem~\ref{exist_smoo} using a  dynamical approach which corresponds, in the case
of systems, to the method of~\cite{ds06}. Here, the result is a particular case
of a general PDE approach.

\subsection{Miscellaneous convergence results} \label{miscell}
We obtain some particular results under different assumptions on the
Hamiltonians. These results are not completely covered by the main result
and bring to light some interesting phenomena. 
\smallskip

\subsubsection{Hamiltonians of Eikonal type.} \label{eikonal-intro}
We focus on the setting of Namah and Roquejoffre \cite{nr99}, i.e., for the Hamiltonians of the form
\begin{eqnarray*}
H_i(x,p)=F_i(x,p)-f_i(x), \quad x\in \T^N, p\in\R^N,
\end{eqnarray*}
where $F_i\in C(\T^N\times \R^N),~~f_i\in C(\T^N)$, and for $i=1,\cdots,m$,
\begin{eqnarray}
   & & \text{$F_i(x,\cdot)$ is convex, coercive in $p$ uniformly in $x\in \T^N$
 and }
  F_i(x,p)\geq F_i(x,0)=0\; . \label{coco}
\end{eqnarray}
We can extend the results of \cite{mt12,clln12} when $\mathcal{F}$
defined in \eqref{same} is replaced by 
\begin{eqnarray}\label{samemin}
\mathcal{S}:=\{x_0 \in \T^N,f_i(x_0)=\min_{x \in \T^N}f_i(x),~~\text{for all $i$}
\} \neq \emptyset.
\end{eqnarray}
This latter condition means that the $f_i$'s attain their minima at the same point but {\em their~value at this point may be different.} 

The main point in this result is that it applies to convex Hamiltonians without the requirement to be strictly convex (in this better case, we can apply Theorem \ref{exist_smoo}).  The idea is that we can find the explicit
formula for $c_i$ together with a constant solution of the ergodic problem. 
It is then possible to transform the system such that the new one satisfies the conditions of Theorem \ref{mainresult}.
The problem is open if $D$ depends on $x$.
\begin{thm}\label{largenew}
Suppose that $D$ satisfies \eqref{irr}-\eqref{row_nonzero}, and 
\eqref{coco}-\eqref{samemin} hold. Then there exist 
$c=(c_1,...,c_1) \in \R^m$ and $u_\infty \in W^{1,\infty}(\T^N)^m$ solution of \eqref{HJEista} such that $u+ct \to u_\infty $ in $C(\T^N)^m$, where $u$ is the solution of \eqref{HJEi}.
\end{thm}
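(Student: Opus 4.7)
The plan is to reduce Theorem~\ref{largenew} to Theorem~\ref{mainresult} by a constant shift of each component of $u$ that absorbs the minima $\mu_i:=\min_{\T^N} f_i$ and produces a system with vanishing ergodic constants satisfying~\eqref{H10}. By coercivity of $F_i$ and Theorem~\ref{ergo3}, there exist a common ergodic constant $c\in\R$, a Lipschitz solution $v$ of~\eqref{HJEista} with constants $(c,\dots,c)$, and a Lipschitz solution $u$ of~\eqref{HJEi} with $u+ct$ bounded. The main task is to identify $c$ in closed form.

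Since $D$ is irreducible with $D(1,\dots,1)^T=0$ and nonpositive off-diagonal entries, a standard Perron--Frobenius-type argument for the Q-matrix $-D$ produces a unique positive left null vector $\pi=(\pi_1,\dots,\pi_m)$ with $\pi^T D=0$ and $\sum_i \pi_i=1$. Since ${\rm Range}(D)=\pi^\perp$, the linear system $\sum_j d_{ij}\lambda_j=c+\mu_i$ has a solution $\lambda\in\R^m$ if and only if $\sum_i\pi_i(c+\mu_i)=0$, that is,
\[
c=-\sum_{i=1}^m \pi_i\mu_i.
\]
This value is indeed the ergodic constant: the constant vector $v\equiv \lambda$ satisfies $-f_i(x)+\sum_j d_{ij}\lambda_j=c-\tilde f_i(x)\leq c$ (with $\tilde f_i:=f_i-\mu_i\geq 0$), hence is a subsolution of~\eqref{HJEista} at level $c$, so the ergodic value is at most $c$; conversely, any Lipschitz viscosity solution of~\eqref{HJEista} is differentiable a.e., so weighting the ergodic equation by $\pi_i$ and summing (using $\pi^T D=0$ and $F_i\geq 0$) gives $c\geq -\sum_i\pi_i f_i(x)$ a.e., and $\mathcal{S}\neq \emptyset$ forces $\inf_x\sum_i\pi_i f_i(x)=\sum_i\pi_i\mu_i$, closing the identification.

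With this $c$ and a corresponding $\lambda$, set $\tilde u_i(x,t):=u_i(x,t)+ct-\lambda_i$. Using $\sum_j d_{ij}=0$, a direct computation shows that $\tilde u$ is bounded, Lipschitz, and solves
\[
\frac{\partial \tilde u_i}{\partial t}+\tilde H_i(x,D\tilde u_i)+\sum_{j=1}^m d_{ij}\tilde u_j=0, \qquad \tilde H_i(x,p):=F_i(x,p)-\tilde f_i(x),
\]
with vanishing ergodic constants (indeed $\tilde v:=v-\lambda$ is a bounded Lipschitz stationary solution at level $0$). It remains to verify~\eqref{H10} for $\tilde H_i$ with $K=K_i:=\{x\in\T^N:f_i(x)=\mu_i\}$, a nonempty compact set containing $\mathcal{S}$. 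Item (i) follows from convexity of $F_i$ in $p$; for (ii), the subgradient inequality combined with $F_i(x,0)=0$ gives $(F_i)_p(x,p)\cdot p\geq F_i(x,p)\geq 0$ a.e., whence
\[
(\tilde H_i)_p(x,p)\cdot p-\tilde H_i(x,p)=(F_i)_p(x,p)\cdot p-F_i(x,p)+\tilde f_i(x)\geq \tilde f_i(x)\geq 0.
\]
Item (iii)(a) is immediate since $\tilde f_i\equiv 0$ on $K_i$, and for (iii)(b) the continuity of $\tilde f_i$ together with $\tilde f_i^{-1}(0)=K_i$ supply $\psi(\eta):=\min\{\tilde f_i(x):{\rm dist}(x,K_i)\geq \eta\}>0$, yielding $(\tilde H_i)_p\cdot p-\tilde H_i\geq \psi(\eta)$ whenever ${\rm dist}(x,K_i)\geq \eta$. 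Theorem~\ref{mainresult} then produces uniform convergence $\tilde u\to \tilde v_\infty$, and translating back gives $u+ct\to u_\infty:=\tilde v_\infty+\lambda$ in $C(\T^N)^m$ with $u_\infty$ a solution of~\eqref{HJEista} at $(c,\dots,c)$.

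The principal obstacle is the explicit identification of $c$ through the invariant measure $\pi$: irreducibility is used crucially to guarantee uniqueness and positivity of $\pi$, while $\mathcal{S}\neq\emptyset$ is indispensable both for the lower bound $c\geq -\sum_i\pi_i\mu_i$ (so that the infimum of $\sum_i\pi_i f_i$ is attained at the common minimum value) and for the verification of~\eqref{H10}(iii)(a) for the shifted Hamiltonians. Once the correct $c$ is pinpointed, the remaining verification of~\eqref{H10} for $\tilde H_i$ is a direct extension of the scalar Namah--Roquejoffre computation.
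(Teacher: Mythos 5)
Your proof is correct and follows essentially the same route as the paper: identify the ergodic constant $c$ explicitly via the positive left null vector of $D$ (your normalized $\pi$ is the paper's $\Lambda/\sum_j\Lambda_j$), use the solvability of the linear system $\sum_j d_{ij}\lambda_j=c+\mu_i$ (which the paper obtains via Lemmas~\ref{del_inver}--\ref{construct} instead of the Fredholm condition) to shift the unknown by a constant vector, and apply Theorem~\ref{mainresult} to the resulting system with Hamiltonians $F_i-\tilde f_i$. Your explicit verification of~\eqref{H10}, with the set $K$ in item~(iii) taken to be $K_i=\{f_i=\mu_i\}$ for the $i$th equation, is precisely the reading the paper needs when it asserts that the shifted system satisfies the hypotheses of Theorem~\ref{mainresult}.
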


\subsubsection{The case when all Hamiltonians are identical} 
In \cite{mt12}, the authors obtained the convergence of the solution of \eqref{HJEi} with $m=2$, $H_1=H_2$ satisfying \eqref{H5}. The proof 
is based on the ideas used for single equations, so it only works under the set of conditions \eqref{H5}. In this setting, we can give a very easy proof  of the convergence of solutions of \eqref{HJEi} for a very wide class of Hamiltonians.
In fact, we observe that the convergence of solutions of systems
is actually {\em inherited from the convergence of solution of the corresponding scalar equation.} We have the following theorem.

\begin{thm}\label{identical}
Suppose that $D(x)=(d_{ij}(x))_{1 \le i,j \le m}$, where the $d_{ij}$'s are continuous functions and for any $x \in \T^N$, $D(x)$ satisfies \eqref{nonzero}. Assume that $H_i=H$ for all $i=1,\dots,m$ and the solution of equation \eqref{HJE} converges as $t$ tends to infinity. Then any solution $u(.,t)\in BUC(\T^N \times (0,\infty))^m$ of \eqref{HJEi} converges to a solution $v$ of \eqref{ergo_sing}.
\end{thm}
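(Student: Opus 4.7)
The crucial observation is that when $H_i\equiv H$, the diagonal embedding $w\mapsto(w,\dots,w)$ sends scalar solutions of \eqref{HJE} to solutions of \eqref{HJEi}: the coupling $\sum_j d_{ij}(x)\,w(x,t)=w(x,t)\sum_j d_{ij}(x)$ vanishes because each row of $D(x)$ sums to zero. Combined with the $L^\infty$--contraction property of the system semigroup (which follows from the usual comparison principle for weakly coupled monotone systems: shifting a supersolution by a constant leaves it a supersolution), this reduces the proof to showing that the components of $u$ become uniformly equal as $t\to\infty$; the scalar hypothesis then identifies the common limit.

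\textbf{Step 1 (Decay of the spread).} I would introduce
\[
S(t) \;:=\; \sup_{x\in\T^N}\Bigl(\max_i u_i(x,t)-\min_j u_j(x,t)\Bigr)
\]
and first show that $t\mapsto S(t)$ is non-increasing. This goes by a standard doubling-of-variables at a triple $(i^*,j^*,x^*)$ realising $S(t)$: since $H_{i^*}=H_{j^*}=H$, the Hamiltonian contributions cancel after passage to the limit in the penalization, and the residual coupling term $\sum_k[d_{i^*k}(x^*)-d_{j^*k}(x^*)]\,u_k(x^*,t)$ is nonnegative because $d_{\alpha\beta}\le 0$ for $\alpha\ne\beta$, $\sum_\beta d_{\alpha\beta}=0$, and $u_{j^*}(x^*)\le u_k(x^*)\le u_{i^*}(x^*)$ at the extremum. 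To upgrade monotonicity to $S(t)\to 0$, I would pass, along $t_n\to\infty$, to an $\omega$-limit profile $\tilde u$ of the relatively compact orbit in $BUC(\T^N)^m$: on $\tilde u$ the spread is constant in time, so the above inequality must be saturated. Equality in a sum of products of two nonpositive quantities forces $u_k(x^*)=u_{i^*}(x^*)$ whenever $d_{i^*k}(x^*)\ne 0$, and the irreducibility built into \eqref{nonzero} then propagates this equality through the full index set, contradicting a positive limiting spread.

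\textbf{Step 2 (Identification of the limit via the scalar equation).} For each fixed $T\ge 0$, let $w^T$ denote the scalar solution of \eqref{HJE} issued at time $0$ from $u_1(\cdot,T)$; its diagonal extension $(w^T,\dots,w^T)$ solves \eqref{HJEi}, and the contraction principle gives, for every $t\ge 0$,
\[
\bigl\|u(\cdot,T+t)-(w^T(\cdot,t),\dots,w^T(\cdot,t))\bigr\|_\infty \;\le\; \max_i\|u_i(\cdot,T)-u_1(\cdot,T)\|_\infty \;\le\; S(T).
\]
By the scalar convergence assumption, $w^T(\cdot,t)\to v_T$ uniformly for some $v_T$ solving \eqref{ergo_sing}, hence $\limsup_{t\to\infty}\|u_i(\cdot,t)-v_T\|_\infty\le S(T)$ for every $i$. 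A triangle inequality applied at two large times $T_1<T_2$ then yields $\|v_{T_1}-v_{T_2}\|_\infty\le S(T_1)+S(T_2)$; since $S(T)\to 0$, the family $\{v_T\}$ is Cauchy and converges uniformly to some $v$ which, being a uniform limit of solutions of \eqref{ergo_sing}, still solves \eqref{ergo_sing}. Letting $T\to\infty$ in the bound above gives $u_i(\cdot,t)\to v$ uniformly for every $i$.

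\textbf{Main obstacle.} The delicate point is the strict decay in Step~1: promoting the mere non-increase of $S(t)$ to $S(t)\to 0$ genuinely requires the full monotone-plus-irreducible structure of $D(x)$ encoded in \eqref{nonzero}, through a LaSalle-type rigidity argument on the $\omega$-limit. A secondary technical difficulty is that the doubling-of-variables argument must be performed in the viscosity sense and must accommodate the $x$-dependence of the coupling, where only continuity (not Lipschitz regularity) of the $d_{ij}$ is assumed; care is needed to ensure the coupling terms pass to the limit correctly.
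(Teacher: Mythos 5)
Your proposal is correct and follows the same two-stage architecture as the paper: (i) show the spread $S(t)=\sup_x(\max_i u_i-\min_j u_j)$ tends to zero, then (ii) compare $u$ against the diagonal lift of the scalar solution issued from $u_1(\cdot,T)$, use the nonexpansiveness of the system semigroup plus the scalar convergence hypothesis to extract a Cauchy family of ergodic solutions, and conclude. Your Step~2 is, line for line, the paper's proof of Theorem~\ref{identical} (with $\Phi^n$ there playing the role of your $w^T$).

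Where you diverge is in the proof of the spread-decay (the paper's Lemma~\ref{samelimit}). You establish only that $S$ is nonincreasing, and then upgrade this to $S(t)\to 0$ by a LaSalle-type rigidity argument on an $\omega$-limit profile: on such a profile the inequality must saturate, each term of the nonnegative coupling sum must vanish, and \eqref{nonzero} forces the extremal components to coincide, contradicting a positive limiting spread. The paper instead proves a \emph{strict} differential inequality $\Phi'+\delta\Phi\le 0$ with an explicit $\delta>0$ built directly from \eqref{nonzero} (via a sign-splitting of $d_{2j}-d_{1j}$ into the sets $J^+$ and $J^-$), which yields exponential decay $\Phi(t)\le\Phi(0)e^{-\delta t}$ and requires no compactness, no stability passage, and no rigidity step. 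Your route is conceptually transparent but carries extra technical overhead (relative compactness of the time-shifted orbit in $C(\T^N)^m$, stability of the system under locally uniform limits, and care that the viscosity inequality saturates on the $\omega$-limit); the paper's route is more quantitative and self-contained. One small imprecision in your write-up: \eqref{nonzero} is not irreducibility \eqref{irr}, and no propagation through the whole index set is needed — a single index $k$ with $d_{i^*k}d_{j^*k}\neq 0$ already pins $u_{i^*}(x^*)=u_k(x^*)=u_{j^*}(x^*)$ (with a trivial adjustment when $k\in\{i^*,j^*\}$), so the contradiction is immediate.
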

\smallskip


\subsection{Organization of the paper}
In Section \ref{preli}, preliminary results for the coupling are given,
the ergodic problem for coercive Hamiltonians is solved and 
basic properties of the solutions like existence, uniqueness and Lipschitz 
regularity are presented. 
Next sections are devoted to the proofs of the theorems stated in the introduction. Section \ref{bs1} contains the proof of Theorem \ref{mainresult}, which
is the most technical and involved part. Next, we give the proof of Theorem \ref{exist_smoo} in Section \ref{strict}. Since the ideas of this proof are based on the ideas used in the proof Theorem \ref{mainresult}, we strongly recommend the reader to read it
after reading Section \ref{bs1}. 
The proof of Theorem \ref{largenew} is given in Section \ref{nr} and the proof of Theorem \ref{identical} 
is given in Section~\ref{bs2}. 

\smallskip

\noindent Notations: Since we only work with viscosity solutions in this paper, we will drop the term ``viscosity'' hereafter. We denote by $C(\T^N)^m$ ($BUC(\T^N)^m$, $W^{1,\infty}(\T^N)^m$) the set of functions $u=(u_1,\dots,u_m)$, where $u_i:\T^N \to \R$ is continuous (bounded uniformly continuous, Lipschitz continuous respectively) for all $i=1,\dots,m$.
\smallskip

\noindent{\sc Acknowledgments.} I would like to thank O. Ley without whom I would have never finished this paper. I thank 
G.~Barles who gave me a lot of useful advice to improve the
first version of this work and M.~Camar-Eddine for useful discussions. 
I thank H. Mitake and H. V. Tran for letting the author know about
their recent work. Last but not least, I thank the two referees for their careful reading and their suggestions for this paper.
\smallskip

\section{Some preliminary results}\label{preli}
In this section, we state the results for the coupling matrices which may depend on $x$. The reason is that we can solve the ergodic problem- Theorem \ref{ergo3} in that case which is an interesting result by itself. 
\subsection{Preliminaries on coupling matrices.} 

We begin with a key property of irreducible matrices.
\begin{lem} \label{lem-rang}{\rm (\cite{clln12})}
Suppose that $D(x)=(d_{ij}(x))_{1 \le i,j \le m}$, where the $d_{ij}$'s are continuous functions and for any $x \in \T^N$, $D(x)$ satisfies \eqref{irr}. Then, for all $x\in \T^N,$ $D(x)$ is degenerate of rank $m-1,$
${\rm ker}(D(x))={\rm span}\{(1, \cdots, 1)\}$
and the real part of each nonzero complex eigenvalue of
$D(x)$ is positive. Moreover
there exists a positive continuous function
$\Lambda=(\Lambda_1,\dots,\Lambda_m): \T^N \to\R^m$ such that
$\Lambda(x) >0$ and $D(x)^T\Lambda(x) =0$ for all $x\in \T^N .$
\end{lem}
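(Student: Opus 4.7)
My plan is to first reduce the linear-algebraic claims about $D(x)$ at a fixed $x$ to the Perron-Frobenius theorem by shifting to a nonnegative matrix, and then handle the continuity of $\Lambda$ in $x$ separately.

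First I would fix $x\in\T^N$, choose a constant $c > \max_{i,y} d_{ii}(y)$ (the $d_{ii}$'s being continuous on the compact set $\T^N$), and set $A(x) := cI - D(x)$. By the sign conventions in \eqref{S3}, $A(x)$ is entrywise nonnegative, and the combinatorial irreducibility assumed in \eqref{irr} for $D(x)$ transfers verbatim to $A(x)$, giving Perron-Frobenius irreducibility. The row-sum condition $\sum_j d_{ij}=0$ moreover yields $A(x)(1,\dots,1)^T = c\,(1,\dots,1)^T$, so $c$ is an eigenvalue of $A(x)$ with strictly positive eigenvector. The Perron-Frobenius theorem then forces $c = \rho(A(x))$, the spectral radius, and guarantees that this eigenvalue is algebraically simple with one-dimensional eigenspace. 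Translating back via $D(x) = cI - A(x)$ gives $\ker D(x) = \mathrm{span}\{(1,\dots,1)\}$ and $\mathrm{rank}\,D(x) = m-1$.

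For the claim on nonzero eigenvalues, I would apply Gershgorin's circle theorem directly to $D(x)$: each Gershgorin disk is $\{z : |z - d_{ii}(x)| \le \sum_{j \neq i}|d_{ij}(x)| = d_{ii}(x)\}$, which lies in the closed right half-plane $\{\Re z \ge 0\}$ and meets the imaginary axis only at the point $0$; hence every eigenvalue either equals $0$ or has strictly positive real part. To produce $\Lambda(x)$, I would apply the same shift-and-Perron-Frobenius argument to $cI - D(x)^T$, which is again nonnegative and irreducible; this delivers a strictly positive left-eigenvector of $D(x)$ for the eigenvalue $0$.

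Continuity of $\Lambda$ on $\T^N$ is the last and subtlest point. Since $0$ is an isolated simple eigenvalue of $D(x)^T$ and the entries depend continuously on $x$, standard perturbation theory for spectral projectors implies that the one-dimensional null eigenspace of $D(x)^T$ varies continuously with $x$. Concretely, one can take $\Lambda(x)$ proportional to a suitable fixed column of the adjugate matrix $\mathrm{adj}(D(x)^T)$, which depends polynomially, hence continuously, on the entries of $D$, and then renormalize, for instance by $\sum_i \Lambda_i(x)=1$; positivity of this continuous $\Lambda$ on all of $\T^N$ then follows because it is pointwise positive and never vanishes. This step is the main obstacle, since the pointwise Perron-Frobenius output only yields existence at each $x$ and must be combined with an adjugate or spectral-projector construction to produce a genuinely continuous positive selection.
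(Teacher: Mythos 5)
The paper does not actually prove Lemma~\ref{lem-rang} here; it is quoted verbatim from~\cite{clln12}, so there is no in-text proof to compare against. Judged on its own terms, your argument follows the standard and, I believe, essentially the same route as~\cite{clln12}: shift $D(x)\mapsto A(x)=cI-D(x)$ to land in the Perron--Frobenius setting, read off the simplicity of the zero eigenvalue and the span of the kernel, apply Gershgorin to $D(x)$ for the location of the remaining eigenvalues, and repeat Perron--Frobenius for $D(x)^{T}$ to get a positive left null vector. All of this is correct.

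The one genuine gap is in the continuity step. You propose to take ``a suitable fixed column'' of $\mathrm{adj}(D(x)^{T})$ and assert that it ``never vanishes,'' but nothing in your argument shows that a \emph{single} column index works for \emph{all} $x$: a column nonzero at a basepoint could a priori vanish at another $x$, with a different column carrying the rank there. You need one of the following to close this. (a) Use the fact that for a singular irreducible $M$-matrix every proper principal minor is strictly positive (Fiedler--Pt\'ak / Berman--Plemmons); then the diagonal entries $\det\bigl(D(x)_{(i,i)}\bigr)$ of $\mathrm{adj}(D(x)^{T})$ are everywhere positive, so $\Lambda_i(x):=\det\bigl(D(x)_{(i,i)}\bigr)$ itself is a continuous positive null vector of $D(x)^{T}$ and no choice of column is even needed. (b) Avoid minors entirely: since $\mathrm{rank}\,D(x)^{T}=m-1$, the adjugate is a nonzero rank-one matrix whose columns lie in $\ker D(x)^{T}=\mathrm{span}\{\Lambda_{\rm PF}(x)\}$ and whose rows lie in $\ker D(x)=\mathrm{span}\{\mathbf 1\}$, so $\mathrm{adj}(D(x)^{T})=\kappa(x)\,\Lambda_{\rm PF}(x)\,\mathbf 1^{T}$ with $\kappa(x)\neq 0$; then $\widetilde\Lambda(x):=\mathrm{adj}(D(x)^{T})\mathbf 1=m\,\kappa(x)\,\Lambda_{\rm PF}(x)$ is continuous, nowhere zero, and at each $x$ is either entrywise positive or entrywise negative, so by connectedness of $\T^{N}$ the sign is constant and $\pm\widetilde\Lambda$ (normalized) is the desired $\Lambda$. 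Note that you should not close the gap by appealing to Lemma~\ref{del_inver} of the present paper, since that lemma is itself deduced from Lemma~\ref{lem-rang} and the argument would be circular.
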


\subsection{Ergodic problem for systems of Hamilton-Jacobi equations with coercive Hamiltonians.}\label{ge}
The classical result for first-order Hamilton-Jacobi equations is due to 
Lions-Papanicolaou-Varadhan~\cite{lpv86}.
In general, the ergodic problem \eqref{ergo_sing} is solved in the following way: we first prove a gradient bound for the regularized equation $ \l v^\l+ H(x,Dv^\l)=0$. Due to the coercivity of $H$, this gradient bound is independent of $\l$. Since $v^\l$ may not be bounded, we make a change of function $w^\l=v^\l-v^\l(x_0)$ in the equation. It follows $|Dw^\l| \le L$ and $w^\l$ is uniformly bounded thanks to the compactness of $\T^N$. And then, the requirements of Ascoli's theorem are fulfilled.
However, for systems with a $x$-dependent coupling matrix, the change of variable $w_i^\l=v_i^\l-v_i^\l(x_0)$ leads to additional terms in the system which are difficult to control. That is why we required the coupling matrix to be independent of $x$ to prove \cite[Theorem 4.3]{clln12}, see also ~\cite{mt12}. Here we can overcome this difficulty by noting that we only need the uniform bound for $v_i^\l-v_1^\l(x_0)$ (see the  estimate~\eqref{keyes}) and hence the change of variable $w_i^\l=v_i^\l-v_1^\l(x_0)$ turns out to be a good one since it does not appear additional terms. Let us point out that we cannot use the gradient bound to obtain the bound for $v_i^\l-v_1^\l(x_0)$.
\begin{thm}\label{ergo3}
Suppose that $D(.)$ satisfies the conditions of Lemma \ref{lem-rang}, $H_i(x,.)$ is coercive in $p$ uniformly in $x$. Then, there is a solution $((c_1,...,c_1),v)\in\R^m\times W^{1,\infty}(\T^N)^m$ to~\eqref{HJEista} with $(c_1,...,c_1)\in {\rm ker}\,D(x)$ for all $x \in \T^N$. Moreover, $(c_1,...,c_1)$ is unique in ${\rm ker}\,D$.
\end{thm}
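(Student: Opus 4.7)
The plan is to follow the Lions--Papanicolaou--Varadhan vanishing discount method, adapted to the system setting in the manner suggested in the paragraph preceding the theorem. I would introduce the regularized system
\begin{eqnarray*}
\l v^\l_i + H_i(x, Dv^\l_i) + \sum_{j=1}^m d_{ij}(x) v^\l_j = 0, \qquad i=1,\dots,m,
\end{eqnarray*}
which admits a unique continuous solution $v^\l=(v^\l_1,\dots,v^\l_m)$ by Perron's method, since the monotone coupling preserves comparison. The constant vectors $\pm \|H(\cdot,0)\|_\infty/\l \cdot (1,\dots,1)$ are a super/subsolution (using that $(1,\dots,1) \in \ker D(x)$), so $\l\|v^\l\|_\infty \le C$ uniformly in $\l$. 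Coercivity of the $H_i$'s, together with the standard doubling-variable argument adapted to monotone systems, gives a Lipschitz bound $|Dv^\l_i|\le L$ independent of $\l$.

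The main obstacle, as the author explicitly flags, is that neither the gradient bound nor the bound on $\l v^\l$ is enough to produce an $L^\infty$ bound on the differences $v^\l_i - v^\l_1(x_0)$: the gradient controls oscillations within each component but not the relative heights between components. To overcome this, I would fix $x_0 \in \T^N$ and evaluate the $i$-th equation at $x_0$ to get
\begin{eqnarray*}
\Bigl|\sum_{j=1}^m d_{ij}(x_0) v^\l_j(x_0)\Bigr| \le \l|v^\l_i(x_0)| + |H_i(x_0, Dv^\l_i(x_0))| \le C',
\end{eqnarray*}
uniformly in $\l$. By Lemma \ref{lem-rang}, $\ker D(x_0) = \mathrm{span}\{(1,\dots,1)\}$ so $D(x_0)$ is an isomorphism on the orthogonal complement; consequently the vector $v^\l(x_0) - v^\l_1(x_0)(1,\dots,1)$ is bounded uniformly in $\l$. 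Combining this with the Lipschitz bound gives a uniform $L^\infty$ bound on $w^\l_i := v^\l_i - v^\l_1(x_0)$, which is the estimate \eqref{keyes}.

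Having $\{w^\l\}$ equi-Lipschitz and equi-bounded, Ascoli extracts a subsequence $w^\l \to v$ in $C(\T^N)^m$. Simultaneously, $\l v^\l_1(x_0)$ lies in a compact interval, so up to a further subsequence $\l v^\l_1(x_0) \to -c_1$ for some $c_1 \in \R$. I would then rewrite the regularized equation in terms of $w^\l$ using the crucial identity $\sum_j d_{ij}(x) = 0$ (which makes the substitution $v^\l_i = w^\l_i + v^\l_1(x_0)$ leave the coupling term unchanged):
\begin{eqnarray*}
\l w^\l_i + \l v^\l_1(x_0) + H_i(x, Dw^\l_i) + \sum_{j=1}^m d_{ij}(x) w^\l_j = 0.
\end{eqnarray*}
Passing to the limit in the viscosity sense gives $H_i(x, Dv_i) + \sum_j d_{ij}(x) v_j = c_1$, i.e. $(v, c)$ solves \eqref{HJEista} with $c=(c_1,\dots,c_1)$. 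This automatically lies in $\ker D(x)$ for every $x$.

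For uniqueness of the ergodic constant within $\ker D$, I would argue by contradiction: suppose $(v, (c_1,\dots,c_1))$ and $(v', (c'_1,\dots,c'_1))$ are two solutions with $c_1 > c'_1$. Then $v(x) - (c_1 - c'_1) t$ and $v'(x)$, viewed as time-dependent sub/supersolutions of \eqref{HJEi} (after subtracting $c'_1$), would have to satisfy the comparison principle for the monotone coupled system; letting $t\to+\infty$ would force the unbounded difference $(c_1-c'_1)t$ to remain bounded, a contradiction. (Equivalently, one may pair the equation with the positive left null-vector $\Lambda(x)$ from Lemma \ref{lem-rang}, since $D(x)^T \Lambda(x)=0$ makes the coupling terms cancel and reduces the uniqueness to that of a single averaged equation.) The principal difficulty is the key estimate on $v^\l_i - v^\l_1(x_0)$; everything else is a routine transcription of the scalar vanishing-discount argument.
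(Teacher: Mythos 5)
Your overall strategy is the same as the paper's (vanishing discount via the regularized system \eqref{HJLm}, the change of variable $w^\l_i = v^\l_i - v^\l_1(x^*)$ which the author explicitly flags as the right normalization, Ascoli, stability). The genuine difference is in Step~4, the proof of the key estimate \eqref{keyes}, which the author singles out as the crux of the result. You argue by pure linear algebra at the single point $x_0$: from the equation, $|D(x_0)v^\l(x_0)|\le C'$ uniformly in $\l$; since $\ker D(x_0)=\mathrm{span}\{(1,\dots,1)\}$, the restriction of $D(x_0)$ to any complement of the kernel (here $\{w:w_1=0\}$, to which $v^\l(x_0)-v^\l_1(x_0)(1,\dots,1)$ belongs --- your mention of the ``orthogonal complement'' is slightly imprecise but harmless) is injective and hence bounded below, giving $|v^\l_i(x_0)-v^\l_1(x_0)|\le C'/c_0$; the Lipschitz bound then propagates this to all $x$. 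The paper instead bounds $\sup_x|\sum_{j\ge 2}d_{ij}(x)w_j(x)|$ and runs a more elaborate inductive reduction using the sign pattern of the monotone matrix and the positive left null-vector $\Lambda(x)$, together with a uniform lower bound $d_{11}(x)\ge\delta_0>0$ deduced from irreducibility and compactness. Your route is shorter and arguably more transparent, since the constant $c_0$ only concerns the fixed matrix $D(x_0)$; the paper's route is more structural (it works row-by-row with the weights $\Lambda_i$) but yields the same bound. One small technical point you should not skip over: ``evaluating the $i$-th equation at $x_0$'' is not immediate for a viscosity solution, but since $v^\l$ is Lipschitz the pointwise equation holds a.e.\ by Rademacher, and the bound on the continuous function $x\mapsto\sum_jd_{ij}(x)v^\l_j(x)$ then extends from a.e.\ to every $x$, in particular $x_0$. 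The existence, $\l$-uniform gradient bound, and the uniqueness of $c$ via time comparison with $v_i(x)-(c_1-c'_1)t$ are handled the same way in both (the paper cites \cite{clln12} for the last two).
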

The proof is given in Appendix.

\subsection{Maximum principle and compactness properties of the solution.}
The following results are proven in \cite{clln12}. 
\begin{prop}\label{PrComp}
Suppose that $D(.)$ satisfies the conditions of Lemma \ref{lem-rang} and either the $H_i$'s are coercive in $p$ or $u$ (or $v$) is Lipschitz. Let $u$, $v$ be
a subsolution and a supersolution of ~\eqref{HJEi}, respectively . We have
\begin{itemize}
  \item [$(i)$] for any $t\geq 0,$
\begin{eqnarray}\label{comp-max}
\mathop{\rm max}_{1\leq i\leq m}\mathop{\rm sup}_{\T^N}\, (u_i(\cdot,t)-v_i(\cdot,t))
\leq \mathop{\rm max}_{1\leq i\leq m}\mathop{\rm sup}_{\T^N}
\left(u_{i} (\cdot,0)-v_{i} (\cdot ,0)\right)^+.
\end{eqnarray}
  \item [$(ii)$]for any $u_0\in C(\T^N)^m$, there exists a unique continuous solution 
of \eqref{HJEi} which admits $u_0$ as a initial condition.
\end{itemize}
\end{prop}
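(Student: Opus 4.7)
My plan for \emph{(i)} is to argue by contradiction using the doubling of variables technique adapted to the coupled structure. Assume for contradiction that there exist $T > 0$ and $\delta > 0$ with $\max_i \sup_{\T^N}(u_i - v_i)(\cdot, T) > M_0^+ + \delta$, where $M_0^+ := \max_i \sup_{\T^N}(u_i(\cdot, 0) - v_i(\cdot, 0))^+$. I would then introduce, for small $\e, \sigma > 0$, the auxiliary function
\begin{equation*}
\Psi_\e(x, y, t, i) := u_i(x, t) - v_i(y, t) - \frac{|x - y|^2}{\e^2} - \sigma t
\end{equation*}
on $\T^N \times \T^N \times [0, T] \times \{1, \dots, m\}$. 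By compactness, this attains a maximum at some $(x_\e, y_\e, t_\e, i_\e)$, and standard penalization arguments should yield $t_\e > 0$ and $|x_\e - y_\e|^2/\e^2 \to 0$ for $\sigma, \e$ sufficiently small (using continuity of $u$ and $v$ at $t = 0$ to control $\Psi_\e|_{t=0}$).

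The next step is to write down the subsolution inequality for $u_{i_\e}$ at $(x_\e, t_\e)$ and the supersolution inequality for $v_{i_\e}$ at $(y_\e, t_\e)$, both with common momentum $p_\e := 2(x_\e - y_\e)/\e^2$, and subtract them to obtain
\begin{equation*}
\sigma + \bigl[H_{i_\e}(x_\e, p_\e) - H_{i_\e}(y_\e, p_\e)\bigr] + \sum_j d_{i_\e j}(x_\e) u_j(x_\e, t_\e) - \sum_j d_{i_\e j}(y_\e) v_j(y_\e, t_\e) \leq 0.
\end{equation*}
Under either alternative hypothesis (coercivity of the $H_i$'s, which bounds $|p_\e|$ through the sub/supersolution inequalities themselves, or Lipschitz regularity of $u$ or $v$, which bounds $|p_\e|$ via the maximization property of $\Psi_\e$), the Hamiltonian difference tends to zero as $\e \to 0$ by uniform continuity of $H_{i_\e}$ in $x$. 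The crucial monotonicity step uses that $i_\e$ maximizes $\Psi_\e$: one gets $u_{i_\e}(x_\e, t_\e) - v_{i_\e}(y_\e, t_\e) \geq u_j(x_\e, t_\e) - v_j(y_\e, t_\e)$ for every $j$, and combined with $\sum_j d_{i_\e j}(x_\e) = 0$ and $d_{i_\e j}(x_\e) \leq 0$ for $j \neq i_\e$ this yields $\sum_j d_{i_\e j}(x_\e)[u_j(x_\e, t_\e) - v_j(y_\e, t_\e)] \geq 0$. Together with $\sum_j [d_{i_\e j}(y_\e) - d_{i_\e j}(x_\e)] v_j(y_\e, t_\e) = o(1)$ from continuity of $D$ and boundedness of $v$, this leaves $\sigma \leq o(1)$, the desired contradiction.

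For part \emph{(ii)}, uniqueness follows immediately from (i) applied to two solutions sharing the same initial datum. For existence, I would apply Perron's method in the coupled setting: construct barriers of the form $u_{0,i} \pm C t$ (after smoothing $u_0$ if coercivity is invoked) which are, respectively, sub- and supersolutions of \eqref{HJEi} for $C > 0$ large enough, and define the candidate solution componentwise as the supremum of all subsolutions dominated by the supersolution; the comparison from (i) then ensures continuity of the result up to $t = 0$ and agreement with $u_0$. The main technical obstacle is the $x$-dependence of the coupling matrix: in the comparison argument one must carefully track and absorb the error $\sum_j [d_{i_\e j}(x_\e) - d_{i_\e j}(y_\e)] v_j(y_\e, t_\e)$, which requires continuity of $D$ combined with an a priori bound on $v$ on compact sets; in the constant-$D$ setting this term vanishes identically and the argument is substantially cleaner.
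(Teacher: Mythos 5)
The paper does not prove Proposition~\ref{PrComp} itself but defers to~\cite{clln12}, so there is no in-text proof to compare against; your proposal is the standard doubling-of-variables argument for monotone weakly coupled systems and is, in outline, the correct one. The central observation---that because $i_\e$ maximizes $\Psi_\e$ one has $u_{i_\e}(x_\e,t_\e)-v_{i_\e}(y_\e,t_\e)\geq u_j(x_\e,t_\e)-v_j(y_\e,t_\e)$ for all $j$, which combined with $\sum_j d_{i_\e j}(x_\e)=0$ and $d_{i_\e j}(x_\e)\leq 0$ for $j\neq i_\e$ gives $\sum_j d_{i_\e j}(x_\e)\bigl[u_j(x_\e,t_\e)-v_j(y_\e,t_\e)\bigr]\geq 0$---is exactly where the monotonicity \eqref{S3} of the coupling enters, and you use it correctly, as you do the continuity of $D(\cdot)$ to absorb the $x$-dependent error. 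Two points should be tightened. First, with a single time variable in $\Psi_\e$, the map $t\mapsto v_{i_\e}(y_\e,t)$ is not an admissible test function for the subsolution $u_{i_\e}$; you must either double time (add $|t-s|^2/\gamma^2$) or invoke the first-order parabolic sub/superjet lemma to produce $a-b=\sigma$ with $(a,p_\e)\in\overline D^{1,+}u_{i_\e}(x_\e,t_\e)$ and $(b,p_\e)\in\overline D^{1,-}v_{i_\e}(y_\e,t_\e)$; one should also penalize with $\sigma/(T'-t)$ rather than $\sigma t$ to keep $t_\e$ strictly below the terminal time. Second, the claim that coercivity ``bounds $|p_\e|$ through the sub/supersolution inequalities themselves'' needs a line: under coercivity one first observes that a bounded subsolution $u_i(\cdot,t)$ satisfies $H_i(x,Du_i)\leq C$ in the viscosity sense (the coupling term being bounded), hence is Lipschitz in $x$, after which $|p_\e|$ is controlled exactly as in the Lipschitz alternative. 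For part~(ii), your Perron construction is the expected one; note that $u_{0,i}\pm Ct$ are genuine sub/supersolutions only for Lipschitz $u_{0}$, so merely continuous data must be handled by regularization plus the stability furnished by estimate~\eqref{comp-max}, as you indicate.
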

Using the existence of solutions of the ergodic problem proved in Theorem \ref{ergo3}, we obtain
\begin{prop}\label{PrReg}{\rm (\cite{clln12})}
Under the assumptions of Theorem \ref{ergo3}, let $u_0\in W^{1,\infty}(\T^N)^m$ and $u$ be the solution of \eqref{HJEi} with
initial data $u_0$. Then, there exists constant $C$ such that
\begin{align*}
   &|u(x,t)+ct|\le C, & x \in\T^N,\,  t\in [0,\infty),\\
   &|u(x,t)-u(y,s)|\leq C(|x-y|+|t-s|),
& x,y\in\T^N,\,  t,s\in [0,\infty),
\end{align*}
where the vector $c=(c_1,\dots,c_1)$ is given in Theorem \ref{ergo3}.
\end{prop}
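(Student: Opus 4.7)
The plan is to combine the ergodic pair produced by Theorem~\ref{ergo3} with the comparison principle of Proposition~\ref{PrComp}, and then to promote the Lipschitz continuity of the initial data together with the coercivity of the $H_i$'s into uniform Lipschitz estimates in both variables.

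First, I would fix a stationary pair $((c_1,\dots,c_1),v)\in\R^m\times W^{1,\infty}(\T^N)^m$ given by Theorem~\ref{ergo3}. Since $c\in\ker D(x)$ for every $x\in\T^N$, a direct substitution shows that $V(x,t):=v(x)-ct$ is a (classical in $t$) solution of~\eqref{HJEi}: indeed $\partial_t V_i=-c_i$, $\sum_j d_{ij}(-c_jt)=-t\sum_j d_{ij}c_j=0$, and $H_i(x,DV_i)+\sum_j d_{ij}v_j=c_i$. Applying Proposition~\ref{PrComp}(i) to the ordered pairs $(u,V)$ and $(V,u)$ gives
\[
\max_{1\le i\le m}\sup_{\T^N}|u_i(\cdot,t)-V_i(\cdot,t)|
\ \le\ \max_{1\le i\le m}\|u_{0i}-v_i\|_\infty,
\]
which rearranges to $|u(x,t)+ct-v(x)|\le C_0$ and hence, since $v$ is bounded, to the first inequality $|u(x,t)+ct|\le C$.

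Second, for Lipschitz continuity in $t$, I would use that $u_0\in W^{1,\infty}(\T^N)^m$ together with continuity of the $d_{ij}$ and local boundedness of $H_i$ to produce a constant $M$ with $|H_i(x,Du_{0i})+\sum_j d_{ij}u_{0j}|\le M$ a.e.\ in $x$. Then $u_0(x)+Mt$ and $u_0(x)-Mt$ are a supersolution and a subsolution of~\eqref{HJEi}, respectively, so comparison yields $|u(x,h)-u_0(x)|\le Mh$. Plugging the two solutions $u(\cdot,\cdot)$ and $u(\cdot,\cdot+h)$ into Proposition~\ref{PrComp}(i) propagates this to every time: $|u(x,t+h)-u(x,t)|\le Mh$, so $\partial_t u$ is bounded in the viscosity sense with a constant depending only on $M$.

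Third, for Lipschitz continuity in $x$, I would invoke the coercivity of $H_i$ uniformly in $x$. By the previous two steps, $\partial_t u_i$ and $\sum_j d_{ij}u_j$ are bounded, so the equation yields $|H_i(x,Du_i)|\le C'$ in the viscosity sense, and uniform coercivity converts this into $|Du_i|\le C''$; a standard sup-convolution or doubling-of-variables argument then upgrades the viscosity gradient bound into a genuine Lipschitz estimate in $x$. Combining the $t$- and $x$-estimates gives the second inequality with a common constant $C$.

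The main obstacle is the last step: because $H_i$ and (in the general setting of Theorem~\ref{ergo3}) $D$ depend on $x$, one cannot simply compare $u(x,t)$ with translates $u(x+h,t)$ as one does on $\T^N$ for $x$-independent coefficients, so coercivity is indispensable and its passage from an a.e.\ pointwise bound to a Lipschitz estimate must be handled with care in the viscosity framework.
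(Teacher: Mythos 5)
Your proposal is correct and follows the standard route, which is essentially the one used in the cited reference~\cite{clln12}: (a) boundedness of $u+ct$ by comparing with the stationary solution $v(x)-ct$ (using $\sum_j d_{ij}(x)c_j=0$ so the linear-in-$t$ part drops out of the coupling); (b) time-Lipschitz continuity by showing $u_0\pm Mt$ are a sub/supersolution (the $Mt$ term again drops out of $\sum_j d_{ij}(x)(\cdot)$ since $\sum_j d_{ij}=0$) and then propagating the estimate forward via the comparison principle applied to $u(\cdot,\cdot)$ and $u(\cdot,\cdot+h)$; (c) space-Lipschitz continuity by plugging the two previous bounds into the equation and invoking uniform coercivity of $H_i$. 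One small point of precision: in step~(b), the bound $|H_i(x,Du_{0i})+\sum_j d_{ij}u_{0j}|\le M$ should be understood not just a.e.\ but on the whole sub/super-differential of $u_{0i}$, which works because for an $L$-Lipschitz function both $D^\pm u_{0i}(x)\subset\overline{B}_L$; and in step~(c), only the one-sided bound $H_i(x,p)\le C'$ on the subsolution side (i.e.\ for $p\in D^+ u_i$) is actually used, which together with coercivity bounds $|p|$ and is exactly what is needed for the Lipschitz estimate.
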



\section{Proof of the main result, Theorem \ref{mainresult}.}\label{bs1}
\smallskip

\subsection{Formal proof. }

In order to explain the new hypothesis \eqref{H10}, we redo the formal proof for the single equation made in \cite{bs00} and then try to mimic it for systems.

In~\cite{bs00}, the authors first show that
\begin{eqnarray}\label{main_rough} 
\min_{x \in \T^N}~~{\frac{\partial u}{\partial t}(x,t)}\to 0~~\text{as $t$ tends to infinity}.
\end{eqnarray}
The main consequence is that the $\omega$-limit set of $\{u(.,t),~~t \ge 0\}$ 
defined by
\begin{equation*}
    \o(u)=\{\psi \in C(\T^N):\,
\exists t_n\to\infty \text{ such that } \lim_{n\to\infty}u(.,t_n)=\psi\}
\end{equation*}
contains only subsolutions of \eqref{ergo_sing}. 

This fact with the compactness of $\T^N$ are enough to prove \eqref{cvHJ}. 
To prove \eqref{main_rough}, we perform a change of function of the form $ \exp(w)=u$.
It turns out that $w$ solves
\begin{eqnarray}\label{inter_sing}
\frac{\partial w}{\partial t}+F(x,w,Dw)=0,~~\text{with}~~F(x,w,p)= {\rm exp}(-w)H(x,{\rm exp}(w)p ),
\end{eqnarray}
and $F$ inherits the properties of $H$
\begin{eqnarray}\label{H9}
&& \left\{
\begin{array}{ll}
\text{There exists a, possibly empty, compact set $\mathcal{K}$ of $\mathbb{T}^{N}$ such that}\\[2mm]
\text{if}~~F(x,w,p) \ge \eta >0~~\text{and ${\rm dist}(x,\mathcal{K}) \ge \eta $},~~\text{then}~~F_w(x,w,p)\ge \psi(\eta)>0~~\text{a.e.}
\end{array}
\right.
\end{eqnarray}
An application of the maximum principle yields that $t \mapsto \min_{x \in \T^N}~~{\frac{\partial w}{\partial t}(x,t)}$ is nonincreasing so it converges. If the limit is nonnegative, we obtain easily the convergence of $w(x,t)$ as $t \to \infty$. Otherwise, there exists some $\eta>0,~~t_0>0$ such that for all $t \ge t_0$
\begin{eqnarray}\label{main_rough2}
\min_{x \in \T^N}~~{\frac{\partial w}{\partial t}(x,t)}\le -\eta.
\end{eqnarray}
Set $z=\frac{\partial w}{\partial t}$ and $m(t)=\min_{x \in \T^N}~~{z(x,t)}:=z(x_t,t)$. Differentiating \eqref{inter_sing} with respect to $t$, we obtain
\begin{eqnarray*}
\frac{\partial z}{\partial t}+F_w(x,w,Dw)z+F_p.Dz=0,~~\text{hence $m'+F_w(x_t,w,Dw)m=0$}.
\end{eqnarray*}
Using \eqref{H9} and \eqref{main_rough2}, we get
\begin{eqnarray*}
m'+\psi(\eta)m \ge 0,~~\text{thus}~~m(t)\ge m(t_0)e^{-\psi(\eta)(t-t_0)}
\end{eqnarray*}
Letting $t$ tends to infinity yields a contradiction with \eqref{main_rough2}.
\smallskip

Now, let us try to mimic the above formal proof for systems through the typical example
\begin{eqnarray*}
&& \left\{
\begin{array}{ll}
\frac{\partial u_1}{\partial t}+  H_1(x, Du_1)+u_1-u_2=0, &\\[3mm]
\frac{\partial u_2}{\partial t}+  H_2(x, Du_2)+u_2-u_1=0,
\end{array}
(x,t)\in \T^N\times (0,+\infty),
\right.
\end{eqnarray*} 
where the $H_i$'s satisfy \eqref{H7}. After the change of function $\exp(w_i)=u_i$, $w_i$ satisfies
\begin{eqnarray}\label{inter11_sing}
&& \left\{
\begin{array}{ll}
\frac{\partial w_1}{\partial t}+F_1(x,w_1,Dw_1 )+1-{\rm exp}(w_2-w_1)=0, &\\[3mm]
\frac{\partial w_2}{\partial t}+F_2(x,w_2,Dw_2 )+1-{\rm exp}(w_1-w_2)=0,
\end{array}
(x,t)\in \T^N\times (0,+\infty),
\right.
\end{eqnarray}
where $F_i(x,w,p)= {\rm exp}(-w)H_i(x,{\rm exp}(w)p )$. 
The goal is to prove that 
\begin{eqnarray}\label{main_roughsy} 
\min_{x \in \T^N,~~i=1,2}~~{\frac{\partial w_i}{\partial t}(x,t)}\to 0~~\text{as $t$ tends to infinity}.
\end{eqnarray}
An application of the maximum principle yields that $t \mapsto \min_{x \in \T^N,~~i=1,2}~~{\frac{\partial w_i}{\partial t}(x,t)}$ is nonincreasing so it converges. If the limit is nonnegative, we obtain easily the convergence of $w_i(x,t)$ as $t \to \infty$. Otherwise, there exists some $\eta>0,~~t_0>0$ such that for all $t \ge t_0$
\begin{eqnarray}\label{main_rough9}
\min_{x \in \T^N,~~i=1,2}~~{\frac{\partial w_i}{\partial t}(x,t)}\le -\eta.
\end{eqnarray}
Set $z_i=\frac{\partial w_i}{\partial t}$ and assume that 
\begin{eqnarray}\label{main_rough7}
m(t)=\min_{x \in \T^N,~~i=1,2}~~{z_i(x,t)}:=z_1(x_t,t).
\end{eqnarray} 
This fact, \eqref{inter11_sing} and \eqref{main_rough9} only give
\begin{eqnarray}\label{main_rough5}
F_1(x_t,w_1,Dw_1 )+1-{\rm exp}(w_2-w_1)(x_t,t)\ge \eta.
\end{eqnarray} 
We see that \eqref{H9} cannot apply here, since we cannot control the additional term $1-{\rm exp}(w_2-w_1)(x_t,t)$ using only the information given by \eqref{main_rough7}.

Surprisingly, under \eqref{H10} $(ii)$, we will show that we are able to choose a $x_t \in \T^N$ in \eqref{main_rough7} such that
\begin{eqnarray}\label{main_rough4}
m(t)=\min_{x \in \T^N,~~i=1,2}~~{z_i(x,t)}:=z_1(x_t,t)=z_2(x_t,t).
\end{eqnarray} 
This fact, \eqref{inter11_sing} and  \eqref{main_rough9} give us {\em two} inequalities
\begin{eqnarray*}
&& \left\{
\begin{array}{ll}
F_1(x_t,w_1,Dw_1 )+1-{\rm exp}(w_2-w_1)(x_t,t)\ge \eta, &\\[3mm]
F_2(x_t,w_2,Dw_2 )+1-{\rm exp}(w_1-w_2)(x_t,t)\ge \eta.
\end{array}
\right.
\end{eqnarray*}
So if $w_1(x_t,t) \le w_2(x_t,t)$ for instance, we have $F_1(x_t,w_1,Dw_1 )\ge \eta.$ Now, we can continue the proof accordingly. The fact that $z_1$ and $z_2$ attain their minima at the same point in \eqref{main_rough4} is a key idea to break the difficulty when passing from equations to systems.
\smallskip


\subsection{Proof of the main result. }
We first state a partial convergence result for \eqref{HJEi}.
\begin{lem}\label{par_con_resu}
For any $x \in K$ defined in \eqref{H10}, the solution $u(x,t)$ of \eqref{HJEi} converges as $t \to \infty$.
\end{lem}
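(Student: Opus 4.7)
\textit{Proof plan.} The strategy exploits the sign condition $H_i\geq 0$ on $K$ from \eqref{H10}(iii)(a) together with the spectral structure of $D$ provided by Lemma~\ref{lem-rang}: I will produce a Lyapunov-type quantity that is monotone in $t$ at every $x_0\in K$, and then use an $\o$-limit argument to reduce \eqref{HJEi} to a linear ODE on $\R$ whose only bounded solutions lie in $\ker D$. The main difficulty is that $K$ may have empty interior, so the sign condition cannot be exploited by restricting the PDE to $K$ classically; a quadratic-penalization argument, with a \emph{uniform} bound on the penalization gradient provided by the Lipschitz regularity of $u$ (Proposition~\ref{PrReg}), is what allows passing to the limit in the viscosity inequality.

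Let $\Lambda=(\Lambda_1,\dots,\Lambda_m)$ be the positive vector of Lemma~\ref{lem-rang} with $D^T\Lambda=0$, and set $U(x,t):=\sum_i\Lambda_iu_i(x,t)$. I first show that for every $x_0\in K$ the map $t\mapsto U(x_0,t)$ is nonincreasing. Fix $x_0\in K$ and $i$, and let $\phi(t)$ be a test function such that $u_i(x_0,\cdot)-\phi$ attains a local maximum at $t_0$; I penalize with $\tilde\phi(x,t):=\phi(t)+\e^{-1}|x-x_0|^2$ so that $u_i-\tilde\phi$ has a local maximum at some $(x_\e,t_\e)\to(x_0,t_0)$ with $p_\e:=2\e^{-1}(x_\e-x_0)$ bounded by the Lipschitz constant of $u$. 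The subsolution inequality at $(x_\e,t_\e)$ reads $\phi'(t_\e)+H_i(x_\e,p_\e)+\sum_jd_{ij}u_j(x_\e,t_\e)\leq 0$; extracting a limit point $p_\infty$ of $p_\e$ and using $H_i(x_0,p_\infty)\geq 0$ yields $\phi'(t_0)+\sum_jd_{ij}u_j(x_0,t_0)\leq 0$. Thus each $u_i(x_0,\cdot)$ is a one-dimensional viscosity subsolution of $v'(t)+\sum_jd_{ij}u_j(x_0,t)\leq 0$, which for Lipschitz $v$ is equivalent to the corresponding a.e.\ inequality. Multiplying by $\Lambda_i>0$, summing in $i$, and using $D^T\Lambda=0$ to annihilate the coupling term gives $\partial_tU(x_0,t)\leq 0$ a.e., so $U(x_0,\cdot)$ is nonincreasing and, being bounded by Proposition~\ref{PrReg}, converges to some $L(x_0)\in\R$.

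Along any sequence $t_n\to\infty$, Proposition~\ref{PrReg} together with stability of viscosity solutions yields a subsequence for which $u(\cdot,\cdot+t_n)\to u^\infty$ uniformly on compact subsets of $\T^N\times\R$, where $u^\infty$ is a bounded viscosity solution of~\eqref{HJEi} on $\T^N\times\R$ satisfying $\sum_i\Lambda_iu_i^\infty(x_0,t)=L(x_0)$ for all $x_0\in K$ and $t\in\R$. Fix $x_0\in K$ and set $w(t):=u^\infty(x_0,t)\in\R^m$; the previous step applied to $u^\infty$ shows that $w_i'(t)+(Dw(t))_i\leq 0$ a.e.\ on $\R$ for each $i$. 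On the other hand, the constancy in $t$ of $\sum_i\Lambda_iw_i$ combined with $D^T\Lambda=0$ forces $\sum_i\Lambda_i[w_i'+(Dw)_i]=0$; since each bracket is nonpositive and $\Lambda_i>0$, each bracket vanishes, so $w$ solves the linear ODE $w'+Dw=0$ on all of $\R$.

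By Lemma~\ref{lem-rang}, $\ker D=\mathrm{span}\{(1,\dots,1)\}$ and every nonzero eigenvalue of $D$ has positive real part, so the only solutions of $w'+Dw=0$ that remain bounded as $t\to-\infty$ are the constants in $\ker D$. Therefore $w_1(t)=\cdots=w_m(t)$ for every $t$, and the normalization $\sum_i\Lambda_iw_i=L(x_0)$ identifies the common value as $L(x_0)/\sum_k\Lambda_k$. Since every subsequential limit of $u(x_0,\cdot)$ takes the same value in $\R^m$, Arzel\`a--Ascoli yields convergence of $u(x_0,t)$ as $t\to\infty$, which is the claim.
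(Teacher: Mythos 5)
Your proof is correct, and it takes a genuinely different route from the one the paper has in mind. Both arguments open the same way: penalize in $x$ to exploit $H_i\ge 0$ on $K$, deducing the coupled ODE inequality $\partial_t u_i(x_0,\cdot)+\sum_j d_{ij}u_j(x_0,\cdot)\le 0$ in the viscosity sense, and then multiply by $\Lambda_i>0$ and sum to see that $U(x_0,\cdot)=\sum_i\Lambda_i u_i(x_0,\cdot)$ is nonincreasing, hence convergent. The divergence is in how one promotes convergence of this scalar to convergence of each component. The paper (which explicitly defers to Step~2 of the proof of Lemma~\ref{IM2}(ii)) stays at the level of elementary inequalities: using \eqref{row_nonzero} it selects a row $i$ and $\alpha>0$ with $d_{ij}+\alpha\Lambda_j<0$ for $j\neq i$, rewrites the subsolution inequality with this $\alpha$-shift, and passes to half-relaxed limits $\limsup^*$ and $\liminf_*$ to squeeze each component and conclude. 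You instead extract a two-sided-in-time $\omega$-limit $u^\infty$ on $\T^N\times\R$, use the frozen $\Lambda$-weighted sum along $u^\infty(x_0,\cdot)$ to upgrade the differential \emph{inequality} to the exact linear ODE $w'+Dw=0$ on all of $\R$, and invoke the spectral description of $D$ from Lemma~\ref{lem-rang}: every non-kernel mode blows up as $t\to-\infty$, so $w$ must be the constant kernel vector normalized by the limit of $U$. Your dynamical route is conceptually cleaner, identifies the limit value explicitly, and — notably — never uses \eqref{row_nonzero}, only irreducibility through Lemma~\ref{lem-rang}; the trade-off is that it leans on the $\omega$-limit construction and an entire-line boundedness argument for a linear system, where the paper needs nothing beyond one-sided relaxed-limit manipulations.
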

The proof of this lemma can be deduced from Step 2 in the proof of Lemma \ref{IM2} $(ii)$. So we skip it for shortness.


Define $\exp(w_i):=u_i$. It is clear that $w$ solves
\begin{eqnarray}\label{inter1}
\frac{\partial w_i}{\partial t}+F_i(x,w_i,Dw_i )+\sum_{j=1}^{m}d_{ij}{\rm exp}(w_j-w_i)=0,~~i=1,\dots,m,
\end{eqnarray} 
with $F_i(x,w,p)= {\rm exp}(-w)H_i(x,{\rm exp}(w)p )$. We get the properties of the $F_i$'s which are inherited from the $H_i$'s
\begin{eqnarray}\label{H13}
&& \left\{
\begin{array}{ll}
\text{$(i)~~(F_i)_w(x,w,p)\ge 0$ for a.e. $(x,w,p)$,}\\[2mm]
\text{$(ii)~$There exists a, possibly empty, compact set $\mathcal{K}$ of $\mathbb{T}^{N}$ such that}\\
\text{\quad if}~~F_i(x,w,p) \ge \eta >0~~\text{and ${\rm dist}(x,\mathcal{K}) \ge \eta $},~~\text{then}~~(F_i)_w(x,w,p)\ge \psi(\eta)>0~~\text{a.e.}
\end{array}
\right.
\end{eqnarray}
For any $\eta>0$ and  $\varphi\in BUC(\T^N\times [0,\infty))$, we define
\begin{eqnarray}\label{newfunc}
P_{\eta}[\varphi](t)&=&\sup_{x \in \T^N,~~s \ge t}[\varphi(x,t)-\varphi(x,s)-2\eta(s-t)].
\end{eqnarray}
From Proposition \ref{PrReg} and Ascoli theorem, we obtain easily the relative compactness of $\{u(.,.+t),~~t \ge 0\}$ in $C(\T^N)$ and hence of $\{w(.,.+t),~~t \ge 0\}$.  So we can choose a sequence $t_n\to +\infty$ such that $(w(\cdot,t_n+\cdot))_{n}$ converges uniformly to
some function $v\in W^{1,\infty}(\T^N\times [0,\infty))^m$. By the stability result (\cite{ bcd97,barles94, el91}),  $v$ 
is still a viscosity solution of~\eqref{inter1}. We state the key estimates on the functions $P_{\eta}[v_i]$'s.


\begin{thm}\label{IM1}
Consider the system \eqref{inter1}, where $D$ satisfies \eqref{irr} and \eqref{row_nonzero}. Assume that $F_i\in C(\T^N\times \R\times \R^N)$ satisfies \eqref{H13}, and $w_i(.,t) \in W^{1,\infty}(\T^N\times (0,+\infty))$ converges as $t \to \infty$ in $\mathcal{K}$. We have

$(i)~~P_{\eta}[v_i](t)=c(\eta)$, where $c(\eta)$ depends only on $\eta$. Moreover, the $P_{\eta}[v_i](t)$'s attain their maximum at the same point $(x_t,s_t)$ for all $i$, i.e.,
\begin{eqnarray}\label{tothink}
&&P_{\eta}[v_i](t)=v_i(x_t,t)-v_i(x_t,s_t)-2\eta(s_t-t)\\
&=&P_{\eta}[v_j](t)=v_j(x_t,t)-v_j(x_t,s_t)-2\eta(s_t-t)~~\text{for all $i,j=1,\dots,m$ and $t>0$.}\nonumber
\end{eqnarray}
$(ii)~~c(\eta)=0$ for any $\eta >0$.
\end{thm}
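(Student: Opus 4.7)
I would transpose the scalar argument of Barles--Souganidis \cite{bs00} to the system, with the new ingredients being the monotonicity \eqref{H13}(i) and the irreducibility of $D$. As a preliminary reduction, I would show that for each $i$, $\tau \mapsto P_\eta[w_i](\tau)$ has a limit as $\tau \to +\infty$; since $v$ is the uniform limit of $w(\cdot, t_n + \cdot)$, this limit equals $P_\eta[v_i](t)$ for every $t$, proving that $P_\eta[v_i]$ is a constant $c_i(\eta)$. Set $c(\eta) = \max_i c_i(\eta)$ and let $i_0$, $(x_0, s_0)$ with $s_0 \ge t_0$ realize $c(\eta)$ at some time $t_0$ (sups are attained because $v \in W^{1,\infty}$).

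For part (i), the heart of the argument is a doubling of variables applied to the auxiliary function $v_{i_0}(x,t) - v_{i_0}(y,s) - 2\eta(s-t) - |x-y|^2/\varepsilon^2 - |t - t_0|^2/\delta^2$. The viscosity subsolution and supersolution inequalities for $v_{i_0}$ at $(x_\varepsilon, t_\varepsilon)$ and $(y_\varepsilon, s_\varepsilon)$ respectively share the common spatial gradient $p_\varepsilon = 2(x_\varepsilon - y_\varepsilon)/\varepsilon^2$; subtracting them and sending $\varepsilon, \delta \to 0$ yields
\[
F_{i_0}(x_0, v_{i_0}(x_0, s_0), p_0) - F_{i_0}(x_0, v_{i_0}(x_0, t_0), p_0) + \sum_j d_{i_0 j}\bigl[\exp(v_j - v_{i_0})(x_0, s_0) - \exp(v_j - v_{i_0})(x_0, t_0)\bigr] \ge 0.
\]
Assumption \eqref{H13}(i) together with $v_{i_0}(x_0, t_0) > v_{i_0}(x_0, s_0)$ makes the $F$-difference $\le 0$; the a priori bound $P_\eta[v_j] \le c(\eta)$ yields $(v_j - v_{i_0})(x_0, t_0) \le (v_j - v_{i_0})(x_0, s_0)$, which combined with $d_{i_0 j} \le 0$ for $j \ne i_0$ makes the coupling sum $\le 0$ as well. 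Both quantities therefore vanish, and the zero-summand condition forces, for every $j$ with $d_{i_0 j} \ne 0$, that $(x_0, s_0)$ is also a maximizer of $P_\eta[v_j](t_0)$ with $c_j(\eta) = c(\eta)$. Irreducibility \eqref{irr} then propagates the conclusion to all indices, establishing \eqref{tothink}.

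For part (ii), assume $c(\eta) > 0$. By (i) the coupling terms at $(x_0, t_0)$ and $(x_0, s_0)$ are equal, so the two viscosity inequalities imply $F_{i_0}(x_0, v_{i_0}(x_0, s_0), p_0) \ge F_{i_0}(x_0, v_{i_0}(x_0, t_0), p_0)$, and \eqref{H13}(i) gives the reverse; hence the two $F_{i_0}$-values coincide. Re-select $i_0$ (possible by (i)) so that $v_{i_0}(x_0, s_0) = \min_j v_j(x_0, s_0)$: then the coupling at $(x_0, s_0)$ is $\le 0$, and the supersolution inequality forces $F_{i_0}(x_0, v_{i_0}(x_0, s_0), p_0) \ge 2\eta > 0$. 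By Lemma \ref{par_con_resu}, $v$ is $t$-stationary on $\mathcal{K}$, so $c(\eta) > 0$ forces $\mathrm{dist}(x_0, \mathcal{K}) > 0$; assumption \eqref{H13}(ii) then activates $(F_{i_0})_w \ge \psi(\eta') > 0$, giving $F_{i_0}(x_0, v_{i_0}(x_0, t_0), p_0) - F_{i_0}(x_0, v_{i_0}(x_0, s_0), p_0) \ge \psi(\eta') c(\eta) > 0$, contradicting the equality just established.

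\emph{Main obstacle.} The delicate technical point is performing the doubling of variables in a way that produces the same gradient $p_\varepsilon$ in both viscosity inequalities, so that \eqref{H13}(i) annihilates the $F_{i_0}$-difference and reduces the problem to an inequality on the coupling sum alone. The Lipschitz regularity from Proposition \ref{PrReg} is needed to keep $p_\varepsilon$ bounded as $\varepsilon \to 0$. Without \eqref{H13}(i) the $F$-difference could carry either sign and the propagation through the irreducible coupling would fail, which is exactly why this new assumption is imposed in \eqref{H10}.
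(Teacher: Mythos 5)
Your overall strategy is the right one, and the detailed mechanics (doubling of variables with a common gradient, using \eqref{H13}(i) to annihilate the $F_i$-difference, deducing that the coupling sum vanishes summand-by-summand, then in part (ii) minimizing over the index to pin down $F_{i_0}\geq 2\eta$ at $(x_0,s_0)$, exploiting $t$-stationarity on $\mathcal{K}$ and \eqref{H13}(ii) to reach $\psi(\eta)c(\eta)\le 0$) closely track the paper's proof of Theorem~\ref{IM1}. There are, however, two issues worth flagging.

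The first and more serious one is that you wave away the ``preliminary reduction'' that $P_\eta[w_i](\tau)$ has a limit as $\tau\to\infty$. This is precisely the content of Lemma~\ref{IM2}, and it is where most of the technical difficulty of the whole theorem resides: one has to prove that $N_{\eta,i}=\exp(P_\eta[w_i])$ satisfies a weakly coupled ODE-type subsolution system with \emph{discontinuous} switching coefficients $\rho_{ij}$, and then deal with that discontinuity by reordering the $N_{\eta,i}$ into their ranked versions $R_1\ge\dots\ge R_m$ so that the system becomes linear with constant coefficients, after which an ODE comparison using the positive left eigenvector $\Lambda$ of the reduced coupling (Lemma~\ref{lem-rang}, Lemma~\ref{LB7}) and irreducibility are needed to squeeze the $R_k$'s to a common limit. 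Nothing in your outline indicates how you would obtain convergence of $P_\eta[w_i]$; without it, the constancy of $P_\eta[v_i]$ is unproved and the rest of the argument cannot get started.

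The second point is more structural than fatal. If you do prove the preliminary reduction via the only argument that seems available (Lemma~\ref{IM2}), that lemma already delivers that all the $P_\eta[w_i]$ converge to the \emph{same} limit, hence $c_i(\eta)\equiv c(\eta)$ for all $i$. Your device of setting $c(\eta)=\max_i c_i(\eta)$ and running a propagation through the coupling graph to establish $c_j(\eta)=c(\eta)$ is therefore redundant. What the paper actually extracts from the viscosity inequalities, after Lemma~\ref{IM2}, is only that the common constant $c(\eta)$ is attained at a common point $(x_\tau,s_\tau)$: instead of your iterative propagation along edges of the irreducibility graph, the paper invokes \eqref{row_nonzero} to choose a single index $i$ with $d_{ij}\ne 0$ for \emph{every} $j$, so that the vanishing coupling sum forces $Q_{\eta,j}(\tau)=c(\eta)$ for all $j$ in one stroke, with no induction over the graph. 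Your propagation route is plausible (the graph associated with an irreducible $D$ is strongly connected, so the argmax information does spread to all indices), but it needs re-running the doubling argument with each newly reached index and a penalization steering the argmax back to $(x_0,t_0,s_0)$ at each stage; the paper's choice sidesteps this bookkeeping, and in any case \eqref{row_nonzero} is already needed inside Lemma~\ref{IM2}, so it costs nothing to use it again here.

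Part (ii) of your outline is essentially correct modulo these issues: given $t$-stationarity on $\mathcal{K}$ (Lemma~\ref{par_con_resu}) one gets $\operatorname{dist}(x_\tau,\mathcal{K})\ge\beta_\eta>0$ from the Lipschitz bound and $c(\eta)>0$, the minimizing-index trick forces $F_k\ge\eta$ at $(\bar y,\bar s)$ in the supersolution inequality, and \eqref{H13}(ii) then yields $\psi(\eta)c(\eta)\le 0$. This agrees with the paper.
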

\begin{rem}
Let us emphasize that \eqref{tothink} is the key fact to prove Theorem~\ref{mainresult}. It is the rigorous statement of \eqref{main_rough4}. To prove this, we need \eqref{H13}$(i)$ but not \eqref{H13}$(ii)$. The important condition \eqref{H13}$(ii)$ only plays a role in the proof of part $(ii)$ of Theorem \ref{IM1}.
\end{rem}
We first give the proof of Theorem \ref{mainresult} 
using Theorem~\ref{IM1}. And then we prove  Theorem~\ref{IM1}.

\begin{proof}[Proof of Theorem \ref{mainresult}]
{\it Step 1.} Since the solution $u$ of  \eqref{HJEi} is bounded, we can assume, by adding a big enough positive constant on the initial conditions if needed, that 
\begin{eqnarray*}
M\ge u_i(x,t) \ge 1,~~\text{$x \in \T^N,~~$ $t>0$, $i=1,\dots,m$}.
\end{eqnarray*}

Set
\begin{eqnarray}\label{chgt-var123}
\exp(w_i(x,t))=u_i(x,t)~~\text{for all $i=1,\dots,m$ and $(x,t) \in \T^N \times (0,\infty)$}.
\end{eqnarray} 
Then $w_i \in W^{1,\infty}(\T^N\times (0,+\infty))$  solves \eqref{inter1}
with $F_i(x,w,p)= {\rm exp}(-w)H_i(x,{\rm exp}(w)p ).$ We can check that $F_i$ 
satisfies \eqref{H13} with $\mathcal{K}:=K$ given in \eqref{H10}. Moreover, Lemma \ref{par_con_resu} gives the convergence of $u_i(x,t)$ and hence of $w_i(x,t)$, for all $x \in K$. Now, all the conditions of Theorem \ref{IM1} are fulfilled.

{\it Step 2.} From Theorem \ref{IM1} $(ii)$, we have
\begin{eqnarray*}
v_i(x,t)-v_i(x,s)-2\eta(s-t)\le0,~~\text{for all $i,~~s \ge t$ and $x \in \T^N$}.
\end{eqnarray*}
Letting $\eta$ tend to 0, we obtain
\begin{eqnarray*}
v_i(x,t)-v_i(x,s)\le 0,~~\text{for all $i$, $x \in \T^N,~~s \ge t \ge 0$}.
\end{eqnarray*}
Therefore, $v_i$ is nondecreasing in $t$, so $v_i(x,t) \to v_{i \infty}(x)$ as $t$ tends to infinity. And since $v_i$ is Lipschitz continuous, $v_{i \infty}$ is also Lipschitz continuous. Therefore, we can use Dini's Theorem to deduce that $v_i(x,t) \to v_{i \infty}(x)$ uniformly in $x$ as $t$ tends to infinity.

{\it Step 3.} The uniform convergence of $(w(\cdot,t_n+\cdot))_{n}$ to $v\in W^{1,\infty}(\T^N\times [0,+\infty))$ yields
\begin{eqnarray*}
o_n(1)+v_i(x,t)\le w_i(x,t+t_n) \le o_n(1)+v_i(x,t)~~\text{in $\T^N \times (0,\infty)$},~~\text{for all $i$}.
\end{eqnarray*}

Taking $\limsup^*$ and $\liminf_*$ with respect to $t$ both sides of the above estimate, we obtain
\begin{eqnarray*}
o_n(1)+v_{i \infty}(x)\le \liminf_{t\to +\infty}\phantom{ }_*\, w_i(x,t)\le \limsup_{t\to +\infty}\phantom{ }^*\,w_i(x,t) \le o_n(1)+v_{i \infty}(x)
\quad x\in\T^N,\text{ for all $i.$}
\end{eqnarray*}
Letting $n$ tend to infinity, we derive
\begin{eqnarray*}
\liminf_{t\to +\infty}\phantom{ }_*\, w_i(x,t)=\limsup_{t\to +\infty}\phantom{ }^*\, w_i(x,t)=v_{i \infty}(x), \quad x\in\T^N,\text{ for all $i,$}
\end{eqnarray*}
which yields the uniform convergence of $w_i(.,t)$ to $v_{i \infty}$ 
in $\T^N$ as $t$ tends to infinity. 

{\it Step 4.} By stability, $v_{i \infty}$ is a solution of \eqref{inter1}. 
Therefore, $u_i(.,t)={\rm exp}(w_i(.,t))$ converges uniformly, as $t$ tends to infinity,
to ${\rm exp}(v_{i \infty})$  which 
is a solution of \eqref{HJEista} with $(c_1,\dots,c_m)=0$ under our assumption. It ends the proof of Theorem \ref{mainresult}.
\end{proof}
\subsection{Proof of Theorem \ref{IM1}.}
We need the following result, the proof of which is given in subsection \ref{lemme_auxi}.
\begin{lem}\label{IM2}
Under the assumptions of Theorem \ref{IM1}, we have

$(i)~~N_{\eta,i}:={\exp(P_{\eta}[w_i])}$ are subsolution of
\begin{eqnarray}\label{A10}
N'_{\eta,i}+\sum_{j \neq i}^{m}d_{ij}\rho_{ij}(t)[N_{\eta,j}(t)-N_{\eta,i}(t)]\le 0,~~i=1,\dots,m,
\end{eqnarray}
where $\rho_{ij}$ are functions defined by
\begin{eqnarray*}
\rho_{ij}(t)=
&& \left\{
\begin{array}{ll}
m_1~~\text{if $N_{\eta,i}(t)\ge N_{\eta,j}(t)$},\\
m_2~~\text{if $N_{\eta,i}(t)< N_{\eta,j}(t)$},
\end{array}
\right.
\end{eqnarray*}
for all $i\neq j$, $t>0,$ and $m_1\le m_2$ are positive constants.

$(ii)~~$ The $N_{\eta,i}(t)$'s converge to the same limit as $t$ tends to infinity.
\end{lem}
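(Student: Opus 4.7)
Part $(i)$. I would prove the subsolution inequality for $N_{\eta,i}$ by a viscosity doubling-of-variables argument adapted to systems. Fix $i$, a $C^1$ test function $\psi(t)$, and $t_0 > 0$ at which $P_{\eta}[w_i] - \psi$ attains a strict local maximum. Since $w_i$ is bounded Lipschitz and $-2\eta(s-t_0) \to -\infty$ as $s \to \infty$, the supremum defining $P_{\eta}[w_i](t_0)$ is attained at some $(\bar x, \bar s) \in \T^N \times [t_0, \infty)$. The boundary case $\bar s = t_0$ forces $P_{\eta}[w_i](t_0) = 0$, hence $N_{\eta,i}(t_0) = 1 \le N_{\eta,j}(t_0)$, and the target inequality is verified trivially. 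For $\bar s > t_0$, I would study the maximizer of
\[
G(x,t,y,s) = w_i(x,t) - w_i(y,s) - 2\eta(s-t) - \frac{|x-y|^2}{2\alpha} - \psi(t) - \frac{(t-t_0)^2}{2\beta}, \quad s \ge t,
\]
which converges to $(\bar x, t_0, \bar x, \bar s)$ as $\alpha \to 0$ and then $\beta \to 0$. Writing the viscosity subsolution inequality for $w_i$ at the approximate maximizer $(x_*,t_*)$ and the supersolution inequality at $(y_*, s_*)$---both sharing the gradient $(x_* - y_*)/\alpha$---and subtracting, the Hamiltonian contribution reduces in the limit to $F_i(\bar x, w_i(\bar x, t_0), \bar p) - F_i(\bar x, w_i(\bar x, \bar s), \bar p)$. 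Since $P_{\eta}[w_i](t_0) \ge 0$ gives $w_i(\bar x, t_0) \ge w_i(\bar x, \bar s)$, the monotonicity \eqref{H13}$(i)$ makes this difference nonnegative and it can be dropped, leaving
\[
\psi'(t_0) + \sum_{j \neq i} d_{ij}\bigl\{e^{(w_j - w_i)(\bar x, t_0)} - e^{(w_j - w_i)(\bar x, \bar s)}\bigr\} \le 0.
\]

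The conversion to \eqref{A10} is algebraic. Shifting the initial data so that $1 \le u_i := \exp(w_i) \le M$, the identity $N_{\eta,i}(t_0) = u_i(\bar x, t_0)/[u_i(\bar x, \bar s)\, e^{2\eta(\bar s - t_0)}]$ and the feasibility inequality $N_{\eta,j}(t_0) \ge u_j(\bar x, t_0)/[u_j(\bar x, \bar s)\, e^{2\eta(\bar s - t_0)}]$ (since $(\bar x, \bar s)$ is only a candidate in the $j$-th sup problem) combine into the one-sided bound
\[
e^{(w_j - w_i)(\bar x, t_0)} - e^{(w_j - w_i)(\bar x, \bar s)} \le \frac{(u_j/u_i)(\bar x, \bar s)}{N_{\eta,i}(t_0)}\bigl[N_{\eta,j}(t_0) - N_{\eta,i}(t_0)\bigr],
\]
with ratio controlled by $1/M \le u_j/u_i \le M$. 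Multiplying by $d_{ij} \le 0$ (which flips the inequality) and by $N_{\eta,i}(t_0) > 0$, the two-sided bound on $u_j/u_i$ is exploited at its correct extremum according to the sign of $N_{\eta,j}(t_0) - N_{\eta,i}(t_0)$, producing $\rho_{ij} = m_2 := M$ when $N_{\eta,i} < N_{\eta,j}$ and $\rho_{ij} = m_1 := 1/M$ when $N_{\eta,i} \ge N_{\eta,j}$. Writing a test function $\tilde\psi$ for $N_{\eta,i} = \exp(P_{\eta}[w_i])$ as $\tilde\psi = \exp(\psi)$ via the chain rule ($\tilde\psi'(t_0) = N_{\eta,i}(t_0)\,\psi'(t_0)$) then delivers the stated subsolution inequality.

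Part $(ii)$. The Lipschitz function $M(t) := \max_i N_{\eta,i}(t)$ satisfies, at a.e.\ $t$ and at $k$ achieving the max (so that $\rho_{kj}(t) = m_1$ for every $j$),
\[
M'(t) \le -\,m_1 \sum_{j \neq k}|d_{kj}|\bigl(M(t) - N_{\eta,j}(t)\bigr) \le 0,
\]
so $M$ is nonincreasing, bounded below by $1$, and converges to some $M_\infty \ge 1$. From any sequence $t_n \to \infty$, Arzela--Ascoli extracts a subsequence along which $N_{\eta,i}(t_n + \cdot)$ converges uniformly on compact sets to Lipschitz limits $\tilde N_i$, which by stability remain subsolutions of \eqref{A10} and satisfy $\max_i \tilde N_i \equiv M_\infty$ on $[0,\infty)$. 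At any $\tau_0 \ge 0$ and any $i^*$ with $\tilde N_{i^*}(\tau_0) = M_\infty$, testing the subsolution inequality for $i^*$ against the constant $\psi \equiv M_\infty$ gives
\[
\sum_{j \neq i^*} |d_{i^* j}|\, m_1\, \bigl(M_\infty - \tilde N_j(\tau_0)\bigr) \le 0,
\]
forcing $\tilde N_j(\tau_0) = M_\infty$ whenever $d_{i^* j} \neq 0$. Iterating and invoking the irreducibility \eqref{irr} propagates the equality to every index, so $\tilde N_i \equiv M_\infty$ for all $i$. Evaluating at $\tau_0 = 0$ gives $N_{\eta,i}(t_n) \to M_\infty$ along the subsequence, and since every subsequence admits such an extraction, $N_{\eta,i}(t) \to M_\infty$ for each $i$.

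The main obstacle is the algebraic conversion in part $(i)$: the asymmetric constants $m_1 = 1/M$ and $m_2 = M$ and the sign-dependent $\rho_{ij}$ are dictated by the fact that the pair $(\bar x, \bar s)$ is optimal only for the $i$-th sup problem but merely feasible for the $j$-th ones, so the exponential ratio $u_j/u_i$ is controlled only through the two-sided normalization $1 \le u_i \le M$.
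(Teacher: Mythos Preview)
Your proof of part $(i)$ follows essentially the same route as the paper: a doubling-of-variables argument giving paired viscosity inequalities, the monotonicity $(F_i)_w\ge 0$ from \eqref{H13}$(i)$ to discard the Hamiltonian difference, and then the one-sided estimate on the coupling terms exploiting that $(\bar x,\bar s)$ is optimal for the $i$th supremum but only admissible for the $j$th ones. The paper records the intermediate inequality for $P_\eta[w_i]$ first (its equation \eqref{A9}) and then exponentiates, while you pass directly to $N_{\eta,i}$ via the chain rule; the content is the same, and your choice $m_1=1/M$, $m_2=M$ coincides with the paper's $m_1=\inf e^{w_j-w_i}$, $m_2=\sup e^{w_j-w_i}$ after the normalization $1\le u_i\le M$.

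Your part $(ii)$ is genuinely different and considerably simpler than the paper's. The paper proceeds by \emph{reordering} the $N_{\eta,i}$'s into $R_1\ge\cdots\ge R_m$, shows (through a somewhat delicate case analysis) that the $R_k$'s satisfy a new differential inequality with a \emph{constant} coupling matrix still enjoying \eqref{S3} and \eqref{row_nonzero}, and then invokes the associated Perron--Frobenius vector $\Lambda$ together with \eqref{row_nonzero} to run a $\limsup^*$/$\liminf_*$ argument. Your route---monotonicity of $\max_i N_{\eta,i}$, Arzel\`a--Ascoli extraction, stability of \eqref{A10} (the map $(a,b)\mapsto d_{ij}\rho_{ij}(b-a)$ being continuous despite the jump in $\rho_{ij}$), and then propagation of the identity $\tilde N_j(\tau_0)=M_\infty$ along the directed graph of nonzero $d_{ij}$'s---uses only irreducibility \eqref{irr} and bypasses the reordering and the $\Lambda$-vector entirely. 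What the paper's approach buys is an explicit ODE structure for the ordered quantities, reusable elsewhere; what yours buys is brevity and a weaker hypothesis for this particular lemma (you never invoke \eqref{row_nonzero} in part $(ii)$).
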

\begin{proof}[Proof of part $(i)$ of Theorem \ref{IM1}]

{\it Step 1.} We notice that $P_{\eta}[\varphi]$ introduced in \eqref{newfunc} is nonnegative, bounded uniformly continuous thanks to the fact that $\varphi \in W^{1,\infty}(\T^N\times (0,+\infty))$. Since $(w(\cdot,t_n+\cdot))_{n}$ converges uniformly to $v$ as $t_n \to \infty$, we easily obtain that $P_{\eta}[v_i](t)=c(\eta)$ for all $i$ and $t$, where $c(\eta)$ depends only on $\eta$ thanks to Lemma \ref{IM2}. Fix $\tau>0$ and choose $i \in \{1,\dots,m\}$ such that the $i${\it th} row is nonzero, this is possible thanks to~\eqref{row_nonzero}. If $P_{\eta}[v_i](\tau)= 0$, then we finish the proof since we can choose $s_\tau=\tau$ and any $x_\tau$ in \eqref{tothink} to fulfill the requirement. We then assume that $P_{\eta}[v_i](\tau)> 0$ and that $P_{\eta}[v_i](\tau)$ attains its maximum at $x_\tau,~~s_\tau$. It is worth mentioning at this stage of the proof that $x_\tau,~~s_\tau$ depend on $i$.

{\it Step 2.} Consider for $x,y \in \T^N,~~t \in (0,\infty)$ and $s \ge t$ the test function
\begin{eqnarray*}
\Psi(x,y,t,s)=v_i(x,t)-v_i(y,s)-2\eta(s-t)-|x-x_\tau|^2-|t-\tau|^2-|s-s_\tau|^2-\frac{|x-y|^2}{2\epsilon^2}.
\end{eqnarray*}
Assume that $\Psi$ achieves its maximum over $\T^N \times \T^N \times  \{(t,s) / 0 \le t \le s\}$ at  $(\bar{x},\bar{y},\bar{t},\bar{s})$. We obtain some classical estimates when $\epsilon \to 0$,
\begin{eqnarray}\label{Es2}
&& \left\{
\begin{array}{ll}
\Psi(\bar{x},\bar{y},\bar{t},\bar{s}) \to P_{\eta}[v_i](\tau),~~\frac{|\bar{x}-\bar{y}|^2}{2\epsilon^2} \to 0,~~
(\bar{x},\bar{s},\bar{t}) \to (x_\tau,s_\tau,\tau),\\

v_i(\bar{x},\bar{t})-v_i(\bar{y},\bar{s}) \ge c(\eta),\\
\bar{s}>\bar{t} \quad \text{since} \quad c(\eta)>  0.
\end{array}
\right.
\end{eqnarray}
{\it Step 3.} Since $v$ is the solution of \eqref{inter1}, we can write the viscosity inequalities
\begin{eqnarray*}
&& \left\{
\begin{array}{ll}
2(\bar{t}-\tau)-2\eta+F_i(\bar{x},v_i(\bar{x},\bar{t}),\frac{\bar{x}-\bar{y}}{\epsilon^2}+2(\bar{x}-x_\tau))+\sum_{j=1}^{m}d_{ij}{\rm exp}(v_j-v_i)(\bar{x},\bar{t})\le 0,\\
-2(\bar{s}-\tau)-2\eta+F_i(\bar{y},v_i(\bar{y},\bar{s}),\frac{\bar{x}-\bar{y}}{\epsilon^2})+\sum_{j=1}^{m}d_{ij}{\rm exp}(v_j-v_i)(\bar{y},\bar{s}) \ge 0,
\end{array}
\right.
\end{eqnarray*}
Since $v_i$ is bounded Lipschitz continuous,
$|F_i(\bar{x},v_i(\bar{x},\bar{t}),\frac{\bar{x}-\bar{y}}{\epsilon^2})
-F_i(x_\tau,v_i(\bar{x},\bar{t}),\frac{\bar{x}-\bar{y}}{\epsilon^2})|\le m(|\bar{x}-x_\tau|) \le o_\e(1),~~|F_i(\bar{x},v_i(\bar{y},\bar{s}),\frac{\bar{x}-\bar{y}}{\epsilon^2})
-F_i(x_\tau,v_i(\bar{y},\bar{s}),\frac{\bar{x}-\bar{y}}{\epsilon^2})| \le o_\e(1)$ 
thanks to the uniform continuity of $F_i$ over compact subsets. It follows from \eqref{Es2},
\begin{eqnarray}\label{Es10}
&& \left\{
\begin{array}{ll}
-2\eta+F_i(x_\tau,v_i(\bar{x},\bar{t}),\frac{\bar{x}-\bar{y}}{\epsilon^2})+\sum_{j=1}^{m}d_{ij}{\rm exp}(v_j-v_i)(\bar{x},\bar{t})\le o_\e(1),\\
-2\eta+F_i(x_\tau,v_i(\bar{y},\bar{s}),\frac{\bar{x}-\bar{y}}{\epsilon^2})+\sum_{j=1}^{m}d_{ij}{\rm exp}(v_j-v_i)(\bar{x},\bar{s}) \ge o_\e(1).
\end{array}
\right.
\end{eqnarray}

{\it Step 4.} Set $Q_{\eta,j}(\tau):=v_j(x_\tau,\tau)-v_j(x_\tau,s_\tau)-2\eta(s_\tau-\tau)$ for $j \neq i$, we have
\begin{eqnarray}\label{T1}
&&\sum_{j=1}^{m}d_{ij}{\rm exp}(v_j-v_i)(\bar{x},\bar{t})-\sum_{j=1}^{m}d_{ij}{\rm exp}(v_j-v_i)(\bar{x},\bar{s})\\
&=& \sum_{j=1,j \neq i}^{m}-d_{ij}{\rm exp}(v_j-v_i)(x_\tau,s_\tau)\big\{1-{\rm exp}[Q_{\eta,j}(\tau)-P_{\eta}[v_i](\tau)]\big\}+o_\e(1).\nonumber
\end{eqnarray}
Using \eqref{H13}$(i)$ and the fact that $v_i(\bar{x},\bar{t})-v_i(\bar{y},\bar{s})\ge P_{\eta}[v_i](\bar{t}) \ge 0$, we have
\begin{eqnarray*}
F_i(x_\tau,v_i(\bar{x},\bar{t}),\frac{\bar{x}-\bar{y}}{\epsilon^2})-F_i(x_\tau,v_i(\bar{y},\bar{s}),\frac{\bar{x}-\bar{y}}{\epsilon^2}) \ge 0.
\end{eqnarray*}
Therefore, by letting $\epsilon \to 0$ after subtracting both sides in \eqref{Es10}, we get
\begin{eqnarray*}
\sum_{j \neq i}-d_{ij}{\rm exp}(v_j-v_i)(x_\tau,s_\tau)\big\{1-{\rm exp}[Q_{\eta,j}(\tau)-P_{\eta}[v_i](\tau)]\big\}\le 0.
\end{eqnarray*}
Thanks to the choice of $i$ in Step 1 and~\eqref{row_nonzero}, we have $-d_{ij} > 0$ for $j\neq i$. And since $Q_{\eta,j}(\tau)\le P_{\eta}[v_i](\tau)$, it follows from the above inequality that $Q_{\eta,j}(\tau)=P_{\eta}[v_i](\tau)$, i.e., $P_{\eta,i}$'s attain their maximum at the same point $(x_\tau,s_\tau)$.

{\it Proof of part $(ii)$.} We only need to repeat Step 2, 3 with a refinement of Step~~4. We assume by contradiction that $P_{\eta}[v_i](\tau)=c(\eta)>0$.

{\it Step 5.} From the convergence of $w_i(x,t)$ on $\mathcal{K}$ and the definition of $v_i$'s, we have
\begin{eqnarray}\label{H14_better}
v_i(x,t)~~\text{is independent of $t$  for all $x \in \mathcal{K}$,}
\end{eqnarray}
where $\mathcal{K}$ is defined in \eqref{H13}. From part $(i)$, for any fixed $\tau>0$, there exists $x_\tau,~~s_\tau$ satisfying \eqref{tothink}. Unlike Step 1, $x_\tau,~~s_\tau$ at this stage does not depend on $i$ anymore. We next choose $k \in \{1,\dots,m\}$  such that 
\begin{eqnarray*}
v_k(x_\tau,s_\tau)=\min_{j=1,\dots,m}v_j(x_\tau,s_\tau).
\end{eqnarray*}
This gives
\begin{eqnarray*}
\sum_{j=1}^{m}d_{kj}{\rm exp}(v_j-v_k)(\bar{x},\bar{s})\le o_\e(1).
\end{eqnarray*}
Thanks to this estimate, we get from the second inequality of \eqref{Es10}
\begin{eqnarray*}
F_k(x_\tau,v_k(\bar{y},\bar{s}),\frac{\bar{x}-\bar{y}}{\epsilon^2}) \ge \eta>0~~\text{for $\epsilon$ small enough}.
\end{eqnarray*}
It follows from \eqref{H14_better}
\begin{eqnarray*}
d(x_\tau,\mathcal{K}) \ge \beta_\eta>0.
\end{eqnarray*}
Thanks to~\eqref{H13} $(ii)$ and the Lipschitz continuity of $F_k$ with respect to $w,$
we infer
\begin{eqnarray*}
&& F_k(x_\tau,u,\frac{\bar{x}-\bar{y}}{\epsilon^2}) \ge \eta>0, \quad {\rm for \ all } \  u \ge v_k(\bar{y},\bar{s}),\\
&& (F_k)_w(x_\tau,u,\frac{\bar{x}-\bar{y}}{\epsilon^2}) \ge \psi(\eta)>0, \quad\text{for almost all }u \ge v_k(\bar{y},\bar{s}).
\end{eqnarray*}
Since $v_k(\bar{x},\bar{t})\geq v_k(\bar{y},\bar{s}),$ we obtain
\begin{eqnarray*}
F_k(x_\tau,v_k(\bar{x},\bar{t}),\frac{\bar{x}-\bar{y}}{\epsilon^2})-F_k(x_\tau,v_k(\bar{y},\bar{s}),\frac{\bar{x}-\bar{y}}{\epsilon^2}) \ge \psi(\eta)(v_k(\bar{x},\bar{t})-v_k(\bar{y},\bar{s})) \ge \psi(\eta)c(\eta).
\end{eqnarray*}
As $P_{\eta}[v_k](.)=P_{\eta}[v_j](.)=c(\eta)$, it follows from \eqref{T1} that,
\begin{eqnarray*}
\sum_{j=1}^{m}d_{kj}{\rm exp}(v_j-v_k)(\bar{x},\bar{t})-\sum_{j=1}^{m}d_{kj}{\rm exp}(v_j-v_k)(\bar{x},\bar{s})
\ge o_\e(1).
\end{eqnarray*}
Therefore, by letting $\epsilon$ tend to 0 after subtracting both sides in \eqref{Es10}, we get
\begin{eqnarray*}
\psi(\eta)c(\eta)\le 0,~~\text{it is a contradiction}.
\end{eqnarray*}
\end{proof}

\subsection{Proof of Lemma \ref{IM2}.}\label{lemme_auxi}
\begin{proof}
{\it Proof of part $(i)$.}

\noindent{\it Step 1}. First of all, we show that
$P_{\eta}[w_i]$ are subsolution of
\begin{eqnarray}\label{A9}
P'_{\eta}[w_i]+\sum_{j=1,~~j \neq i}^{m}-d_{ij}\rho_{ij}(\tau)\big[1-{\rm exp}(P_{\eta}[w_j]-P_{\eta}[w_i])\big]\le 0,~~i=1,\dots,m,
\end{eqnarray}
where $\rho_{ij}$ are functions defined by
\begin{eqnarray}\label{def of rho}
\rho_{ij}(t)=
&& \left\{
\begin{array}{ll}
m_1~~\text{if $P_{\eta}[w_i](t)\ge P_{\eta}[w_j](t)$},\\
m_2~~\text{if $P_{\eta}[w_i](t)< P_{\eta}[w_j](t)$},
\end{array}
\right.
\end{eqnarray}
for all $i\neq j$, $t>0,$ and $m_1\le m_2$ are positive constants. To do so, let $\Phi \in C^1((0,\infty))$ and $\tau$ be a strict maximum point of $P_{\eta}[w_i]-\Phi$ over $[\tau-\delta,\tau+\delta]$ for some $\delta >0$. If $P_{\eta}[w_i](\tau)=0$ and since $P_{\eta}[w_i] \ge 0$, we see that $\tau$ is a minimum point of $\Phi$ and hence $\Phi'(\tau)=0$. To prove \eqref{A9}, it is enough to show that
\begin{eqnarray*}
\sum_{j=1,~~j \neq i}^{m}-d_{ij}\rho_{ij}(\tau)\big\{1-{\rm exp}[P_{\eta}[w_j](\tau)]\big\}\le 0,
\end{eqnarray*}
this is clearly true since $P_{\eta}[w_j^\delta] \ge 0$. We then assume that $P_{\eta}[w_i](\tau)> 0$ to continue.

\noindent{\it Step 2}. Consider, $x,y \in \T^N,t \in [\tau-\delta,\tau+\delta]$ and $s \ge t$ the test function
\begin{eqnarray*}
\Psi(x,y,t,s)=w_i(x,t)-w_i(y,s)-2\eta(s-t)-\frac{|x-y|^2}{2\epsilon^2}-\Phi(t).
\end{eqnarray*}
Assume that $\Psi$ achieves its maximum over $\T^N \times \T^N \times  \{(t,s) / t \le s ,t \in [\tau-\delta,\tau+\delta]$\} at  $(\bar{x},\bar{y},\bar{t},\bar{s})$. We obtain some classical estimates when $\epsilon \to 0$,
\begin{eqnarray*}
&& \left\{
\begin{array}{ll}
\Psi(\bar{x},\bar{y},\bar{t},\bar{s}) \to P_{\eta}[w_i](\tau)-\Phi(\tau),~~\frac{|\bar{x}-\bar{y}|^2}{2\epsilon^2} \to 0,\\ [1mm]
~~\text{$\bar{t} \to \tau$ since $\tau$ is a strict maximum point of $P_{\eta}[w_i]-\Phi$ in $[\tau-\delta,\tau+\delta]$},\\
w_i(\bar{x},\bar{t})-w_i(\bar{y},\bar{s}) \ge P_{\eta}[w_i](\bar{t}),~~\bar{s}>\bar{t} \quad \text{since} \quad P_{\eta}[w_i](\tau)> 0.
\end{array}
\right.
\end{eqnarray*}
\noindent{\it Step 3}. Since $w$ is the solution of \eqref{inter1}, we have
\begin{eqnarray}\label{Es20}
&& \left\{
\begin{array}{ll}
\Phi'(\bar{t})-2\eta+F_i(\bar{x},w_i(\bar{x},\bar{t}),\frac{\bar{x}-\bar{y}}{\epsilon^2})+\sum_{j=1}^{m}d_{ij}{\rm exp}(w_j-w_i)(\bar{x},\bar{t})\le 0,\\
-2\eta+F_i(\bar{x},w_i(\bar{y},\bar{s}),\frac{\bar{x}-\bar{y}}{\epsilon^2})+\sum_{j=1}^{m}d_{ij}{\rm exp}(w_j-w_i)(\bar{x},\bar{s})+o_\e(1) \ge 0.
\end{array}
\right.
\end{eqnarray}

Using \eqref{H13} and the fact that $w_i(\bar{x},\bar{t}) \ge w_i(\bar{y},\bar{s})\ge P_{\eta}[w_i](\bar{t}) \ge 0$, we get
\begin{eqnarray*}
F_i(\bar{x},w_i(\bar{x},\bar{t}),\frac{\bar{x}-\bar{y}}{\epsilon^2})-F_i(\bar{x},w_i(\bar{y},\bar{s}),\frac{\bar{x}-\bar{y}}{\epsilon^2}) \ge 0.
\end{eqnarray*}
Moreover,
\begin{eqnarray*}
&&\sum_{j=1}^{m}d_{ij}{\rm exp}(w_j-w_i)(\bar{x},\bar{t})-\sum_{j=1}^{m}d_{ij}{\rm exp}(w_j-w_i)(\bar{x},\bar{s})\\
&\ge& \sum_{j \neq i}-d_{ij}{\rm exp}(w_j-w_i)(\bar{x},\bar{s})\big\{1-{\rm exp}[P_{\eta}[w_j](\tau)-P_{\eta}[w_i](\tau)]\big\}+o_\e(1)\\
&\ge& \sum_{j \neq i}-d_{ij}\rho_{ij}(\tau)\big\{1-{\rm exp}[P_{\eta}[w_j](\tau)-P_{\eta}[w_i](\tau)]\big\}+o_\e(1),
\end{eqnarray*}
where $\rho_{ij}$ is defined as in~\eqref{def of rho}
with $$m_1=\inf_{x \in \T^N,~~s>0,~~1 \le i,j \le m}{\rm exp}(w_j-w_i)(x,s)>0,$$ $$m_2=\sup_{x \in \T^N,~~s>0,~~1 \le i,j \le m}{\rm exp}(w_j-w_i)(x,s)<\infty$$
which are well-defined thanks to the boundedness of $w_i$.

Therefore, by letting $\epsilon$ tend to 0 after subtracting both sides in \eqref{Es20}, we get: 
\begin{eqnarray*}
\Phi'(\tau)+\sum_{j=1,~~j \neq i}^{m}-d_{ij}\rho_{ij}(\tau)\big\{1-{\rm exp}[P_{\eta}[w_j](\tau)-P_{\eta}[w_i](\tau)]\big\}\le 0.
\end{eqnarray*}

{\it Proof of part $(ii)$.}
The proof is quite technical since we cannot deal directly with the discontinuity of the $\rho_{ij}$'s. The main idea is to reorder the $N_i$'s into the biggest, the second biggest ... and the smallest function. Surprisingly, the new functions satisfy a nicer system where the discontinuous functions $\rho_{ij}$ are replaced by constants. We then prove they converge to the same limit and, as a result, the $N_i$'s, which are bounded by the biggest function and the smallest function, must converge to the same limit. 
For simplicity of notations, we suppose that $m_1=1,~~m_2=2$.

{\it Step 1.} Define
\begin{eqnarray}\label{reorder}
&&R_1(t)=\max_{i \in \{1,\dots,m\}}N_i(t):=N_{i_1}(t),\\
&&R_2(t)=\max_{i \in \{1,\dots,m\}-\{i_1\}}N_i(t):=N_{i_2}(t),\nonumber\\
&&R_k(t)=\max_{i \in \{1,\dots,m\}-\{i_1,\dots,i_{k-1}\}}N_i(t):=N_{i_k}(t),~~k=2,\dots,m.\nonumber
\end{eqnarray}
We will prove at Step 3 that $R_i$ satisfies
\begin{eqnarray}\label{A11}
R'_i(t)+\sum_{j=1}^{m}d'_{ij}R_j(t)\le 0,~~i=1,\dots,m,
\end{eqnarray}
where $(d'_{ij})_{1 \le i,j \le m}$ satisfies \eqref{S3} and \eqref{row_nonzero}. 

\noindent{\it Step 2.} Call $\Lambda_i$ be the vector from Lemma \ref{lem-rang} for the coupling now is $(d'_{ij})_{1 \le i,j \le m}$. We have
\begin{lem}\label{LB7}{\rm (\cite[Lemma 5.5]{clln12})}
$\sum_{j=1}^m \Lambda_j R_j(t)$ is nonincreasing and converges as $t\to +\infty.$
\end{lem}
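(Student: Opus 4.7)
The plan is to exploit the left-eigenvector $\Lambda=(\Lambda_1,\dots,\Lambda_m)>0$ associated with $D'=(d'_{ij})$ which is provided by Lemma \ref{lem-rang}. By construction, $(D')^T\Lambda=0$, which in coordinates reads
\begin{equation*}
\sum_{i=1}^m \Lambda_i\, d'_{ij}=0 \quad \text{for every } j=1,\dots,m.
\end{equation*}
This identity is precisely what one needs in order to make the coupling terms in \eqref{A11} cancel when one takes a $\Lambda$-weighted sum of the inequalities for $R_1,\dots,R_m$.

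Before summing, I would first argue that each $R_i$ is bounded and continuous (in fact Lipschitz): indeed, $w_i$ is bounded on $\T^N\times(0,\infty)$ by Proposition \ref{PrReg} via the change of variable \eqref{chgt-var123}, so $P_\eta[w_i]$ is bounded, hence $N_{\eta,i}=\exp(P_\eta[w_i])$ is bounded away from $0$ and $\infty$; the $R_i$ are maxima of finitely many such functions and therefore inherit continuity and two-sided bounds $0<c\le R_i\le C$. For such a continuous function, the viscosity subsolution inequality \eqref{A11} with continuous right-hand side is equivalent to the integrated inequality
\begin{equation*}
R_i(t)-R_i(s) \;\le\; -\int_s^t \sum_{j=1}^m d'_{ij} R_j(r)\,dr, \qquad 0\le s\le t,\ i=1,\dots,m,
\end{equation*}
which is a standard equivalence for ODE-type viscosity subsolutions.

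The main step is then to multiply the $i$-th integrated inequality by $\Lambda_i>0$ and sum over $i$. Using Fubini and the left-eigenvector identity,
\begin{equation*}
\sum_{i=1}^m \Lambda_i R_i(t)-\sum_{i=1}^m \Lambda_i R_i(s) \;\le\; -\int_s^t \sum_{j=1}^m \Bigl(\sum_{i=1}^m \Lambda_i\, d'_{ij}\Bigr) R_j(r)\,dr \;=\; 0,
\end{equation*}
so $t\mapsto \sum_{j=1}^m \Lambda_j R_j(t)$ is nonincreasing. Combined with the uniform lower bound $\sum_j \Lambda_j R_j(t)\ge c\sum_j \Lambda_j>0$, the monotone bounded real-valued function $t\mapsto \sum_j \Lambda_j R_j(t)$ admits a finite limit as $t\to+\infty$, which is the claim.

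I do not expect any genuine obstacle here: the only point requiring a little care is the passage from the viscosity inequality \eqref{A11} to its integrated form, which is legitimate because the $R_i$ are Lipschitz and the coefficients $d'_{ij}$ are constants. Everything else is linear algebra built into Lemma \ref{lem-rang}. The proof is therefore essentially a one-line computation once the correct test quantity $\sum_j \Lambda_j R_j$ has been identified.
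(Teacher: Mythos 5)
Your proof is correct: multiplying the $i$-th inequality in \eqref{A11} by the positive weight $\Lambda_i$ and summing makes the coupling term cancel because $(D')^T\Lambda=0$, giving the monotonicity, while the convergence follows because the nonincreasing quantity $\sum_j\Lambda_j R_j$ is bounded below (indeed $R_j=\exp(P_\eta[w_j])\ge 1$). The paper itself only cites \cite[Lemma 5.5]{clln12} for this fact, and your argument — including the standard passage from the viscosity subsolution inequality for the ODE \eqref{A11} to its integrated form, which is legitimate since the $R_i$ are bounded continuous and the coupling term is continuous — is exactly the intended one.
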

From~\eqref{row_nonzero}, there exists $i\in\{1,\dots,m\}$ and $\alpha >0$ such that
\begin{eqnarray}\label{strict111}
 d'_{ij}+\alpha\Lambda_j < 0
\quad j=1,\dots ,m, \ j\not= i.
\end{eqnarray}
From \eqref{A11}, we obtain that $R_i$ is a subsolution of
\begin{equation*}
R'_i(t)+(d'_{ii}+\alpha\Lambda_i)R_i
\le \alpha \Lambda_i R_i +\sum_{j\not= i}(-d'_{ij})R_j,
\qquad t\in(0,+\infty).
\end{equation*}
From the stability result, 
$\ov R_i=\limsup_{t\to +\infty}\phantom{ }^*\, R_i(t)$ is a subsolution of
\begin{equation}\label{eq528}
(d'_{ii}+\alpha\Lambda_i)\ov R_i
\le \limsup_{t\to +\infty}\phantom{ }^*\,
\{ \alpha \Lambda_i R_i+\sum_{j\not= i}(-d'_{ij})R_j\}.
\end{equation}
But,
\begin{eqnarray*}
&& \limsup_{t\to +\infty}\phantom{ }^*\,
\{ \alpha \Lambda_i R_i+\sum_{j\not= i}(-d'_{ij})R_j\}\\
&\leq &
 \alpha \limsup_{t\to +\infty}\phantom{ }^*\,\{  \sum_{j=1}^m \Lambda_j R_j\}
+
\sum_{j\not= i}
\limsup_{t\to +\infty}\phantom{ }^*\,\{ 
(-d'_{ij}-\alpha\Lambda_j )R_j
\}\\
&\leq &
 \alpha \limsup_{t\to +\infty}\phantom{ }^*\,\{  \sum_{j=1}^m \Lambda_j R_j\}
+\sum_{j\not= i}(-d'_{ij}-\alpha\Lambda_j)
\ov R_j,
\end{eqnarray*}
since \eqref{strict111} holds.
The previous inequality and~\eqref{eq528} imply
\begin{eqnarray*}
\alpha\sum_{j= 1}^m\Lambda_j \ov R_j
&=& (d'_{ii}+\alpha\Lambda_i)\ov R_i 
+\sum_{j\not= i}(d'_{ij}+\alpha\Lambda_j)
\ov R_j
\le 
\alpha \limsup_{t\to +\infty}\phantom{ }^*\,\{  \sum_{j=1}^m \Lambda_j R_j\}.
\end{eqnarray*}
Using Lemma~\ref{LB7}, it follows
\begin{eqnarray}\label{equa345}
&&\sum_{j= 1}^m\Lambda_j\ov R_j 
= \limsup_{t\to +\infty}\phantom{ }^*\,\{  \sum_{j=1}^m \Lambda_j R_j\}
\leq \sum_{j=1}^m \Lambda_j R_j(t),
\quad t\in (0,+\infty).
\end{eqnarray}
Therefore, for all $k=1,\dots,m,$ we have
\begin{eqnarray*}
\Lambda_k (R_k(t)-\ov R_k)\geq 
\sum_{j\not= k}\Lambda_j \ov R_j 
- \sum_{j\not= k} \Lambda_j R_j(t).
\end{eqnarray*}
Moreover 
\begin{eqnarray*}
\Lambda_k( \underline  R_k - \ov R_k)
&=& \liminf_{t\to +\infty}\phantom{ }_*\,\{ \Lambda_k
(R_k(t)-\ov R_k)\}\\
&\geq&  \liminf_{t\to +\infty}\phantom{ }_*\,\{
\sum_{j\not= k}\Lambda_j\ov  R_j 
- \sum_{j\not= k} \Lambda_jR_j(t)\}
\geq
\sum_{j\not= k}\Lambda_j\ov R_j
-  \limsup_{t\to +\infty}\phantom{ }^*\,\{ \sum_{j\not= k} 
\Lambda_jR_j(t)\}
\geq 0
\end{eqnarray*}
by~\eqref{equa345}. Since $\Lambda_k>0$,
we conclude that $\ov R_k\leq \underline R_k$ and hence $R_k(t) \to r_k$ when $t$ tends to infinity. Taking into account these limits, we get from the stability result for~\eqref{A11}
\begin{eqnarray*}
\sum_{j=1}^{m}d'_{ij}r_j\le 0,~~\text{and hence }\sum_{j=1}^{m}d'_{ij}r_j= 0,~~i=1,\dots,m.
\end{eqnarray*}
An application of Lemma \ref{lem-rang} shows that $r_i=r_j$ for all $i,j=1,\dots,m,$ which is the desired result. Let us notice that the proof of this step gives the proof of Lemma~\ref{par_con_resu}.

\noindent{\it Step 3.} We finish with the proof of the claim \eqref{A11}. For simplicity, we assume first \eqref{row_nonzero} holds for all rows, i.e. $d_{ij} \neq 0$ for all $i,j=1,\dots,m$, see the general case at the end of this proof. We assume without loss of generality that 
\begin{eqnarray}\label{meme1}
\min_{i \neq j}-d_{ij}=1,~~\max_{i=1,\dots,m}d_{ii}=\frac{M}{2},~\text{where $M$ is a constant bigger than $2m-2$}.
\end{eqnarray} 
\noindent{\it 3.1.} We first prove the claim for $R_1$. Let $t_0>0$ and $\phi \in C^1(0,\infty)$ such that $R_1-\phi$ attains a maximum at $t_0$ and suppose that $R_1(t_0)=N_1(t_0)$. Since $N_1(t_0) \ge N_j(t_0),~~j\ge 2$, then $\rho_{1j}(t_0)=1,~~j\ge 2$. Thus
\begin{eqnarray*}
&&\phi'(t_0)+\sum_{j=2}^{m}-d_{1j}R_1(t_0)+\sum_{j=2}^{m}d_{1j}N_j(t_0)\le 0, \text{i.e.,}\\
&&\phi'(t_0)+(m-1)R_1(t_0)-\sum_{j=2}^{m}N_j(t_0)+\sum_{j=2}^{m}(1+d_{1j})[N_j(t_0)-R_1(t_0)]\le 0.
\end{eqnarray*}
Using \eqref{meme1} and \eqref{reorder}, we have $1+d_{1j}\le 0,~~N_j(t_0)-R_1(t_0)\le 0,~~j \ge 2$. The above inequality with the fact that $\sum_{j=2}^{m}N_j(t_0)=\sum_{j=2}^{m}R_j(t_0)$ lead to
\begin{eqnarray}\label{equa1}
\phi'(t_0)+(m-1)R_1(t_0)-\sum_{j=2}^{m}R_j(t_0)\le 0.
\end{eqnarray}
\noindent{\it 3.2.} We now prove the claim for $R_k,~~k \ge 2$. Let $t_0>0$ and $\phi \in C^1(0,\infty)$ such that $R_k-\phi$ attains a maximum at $t_0$. Suppose first that $N_1(t_0)=R_k(t_0)<R_{k-1}(t_0)$, so $N_1(t_0)-\phi(t_0)= R_k(t_0)-\phi(t_0)\ge N_1(t)-\phi(t)$ for $t$ near $t_0$. It follows from the definition of $\rho_{ij}(.)$ that
\begin{eqnarray*}
\phi'(t_0)+\sum_{j=2}^{m}d_{1j}\rho_{ij}(t_0)[N_j(t_0)-R_k(t_0)]\le 0,~~\text{i.e.,}
\end{eqnarray*}
\begin{eqnarray}\label{bai2_in1}
\phi'(t_0)+\sum_{j \in \mathcal{I}}d_{1j}[N_j(t_0)-R_k(t_0)]+2\sum_{j \in \mathcal{I}^c}d_{1j}[N_j(t_0)-R_k(t_0)]\le 0,
\end{eqnarray}
where $\mathcal{I}:=\{2 \le j \le m,~~R_k(t_0)\ge N_j(t_0) \}$ and $\mathcal{I}^c=\{2,\dots,m\}-\mathcal{I}$. Since $N_1(t_0)=R_k(t_0)<R_{k-1}(t_0)$, we obtain that ${\rm card}(\mathcal{I}^c)=k-1$. We have
\begin{eqnarray*}
\sum_{j \in \mathcal{I}}d_{1j}[N_j(t_0)-R_k(t_0)]&=&{\rm card}(\mathcal{I})R_k(t_0)-\sum_{j \in \mathcal{I}}N_j(t_0)+\sum_{j \in \mathcal{I}}(1+d_{1j})[N_j(t_0)-R_k(t_0)]\\
&\ge&(m-k)R_k(t_0)-\sum_{j \in \mathcal{I}}N_j(t_0)=(m-k)R_k(t_0)-\sum_{j=k+1}^{m}R_j(t_0),
\end{eqnarray*}
where the last inequality follows from the fact $1+d_{1j}\le 0$ and $N_j(t_0)-R_1(t_0)\le 0,~~j \in \mathcal{I}$. 
Noticing that ${\rm card}(\mathcal{I}^c)=k-1$, we have
\begin{eqnarray*}
&& 2\sum_{j \in \mathcal{I}^c}d_{1j}[N_j(t_0)-R_k(t_0)]\\
&=&M(k-1)R_k(t_0)-M\sum_{j \in \mathcal{I}^c}N_j(t_0)+\sum_{j \in \mathcal{I}^c}(M+2d_{1j})[N_j(t_0)-R_k(t_0)]\\
&\ge&M(k-1)R_k(t_0)-M\sum_{j \in \mathcal{I}^c}N_j(t_0)=M(k-1)R_k(t_0)-M\sum_{j=1}^{k-1}R_j(t_0),
\end{eqnarray*}
where the inequality follows from the fact $M+2d_{1j}\ge 0$ and $N_j(t_0)-R_1(t_0)\ge 0,~~j \in \mathcal{I}^c$.
Using these two above inequalities, it follows from \eqref{bai2_in1} that
\begin{eqnarray}\label{equak}
\phi'(t_0)+[M(k-1)+m-k]R_k(t_0)-M\sum_{j=1}^{k-1}R_j(t_0)-\sum_{j=k+1}^{m}R_j(t_0)\le 0.
\end{eqnarray}

It remains to deal with the case $N_1(t_0)=R_k(t_0)=R_{k-1}(t_0)$. We divide it into two subcases

\noindent{\it 3.3.} If $N_1(t_0)=R_k(t_0)=R_{k-1}(t_0)=...=R_l(t_0)<R_{l-1}(t_0)$ with $k-1 \ge l \ge 2$, then $N_1(t_0)-\phi(t_0)= R_l(t_0)-\phi(t_0)\ge N_1(t)-\phi(t)$. Applying the result in \eqref{equak}, we have
\begin{eqnarray*}
\phi'(t_0)+[M(l-1)+m-l]R_l(t_0)-M\sum_{j=1}^{l-1}R_j(t_0)-\sum_{j=l+1}^{m}R_j(t_0)\le 0.
\end{eqnarray*}
It is clear that
\begin{eqnarray*}
&&[M(k-1)+m-k]R_k(t_0)-M\sum_{j=1}^{k-1}R_j(t_0)-\sum_{j=k+1}^{m}R_j(t_0)\\
&\le& [M(l-1)+m-l]R_l(t_0)-M\sum_{j=1}^{l-1}R_j(t_0)-\sum_{j=l+1}^{m}R_j(t_0).
\end{eqnarray*}
It follows that \eqref{equak} holds in this case too.

\noindent{\it 3.4.} If $N_1(t_0)=R_k(t_0)=R_1(t_0)$, then we have the estimate \eqref{equa1}. It is clear that
\begin{eqnarray*}
&&[M(k-1)+m-k]R_k(t_0)-M\sum_{j=1}^{k-1}R_j(t_0)-\sum_{j=k+1}^{m}R_j(t_0)\\
&\le& [m-1]R_1(t_0)-\sum_{j=2}^{m}R_j(t_0).
\end{eqnarray*}
Then \eqref{equak} holds.

\noindent{\it Step 4. }If \eqref{row_nonzero} only holds for a row, we will take the minimum in \eqref{meme1} among the $d_{ij}$'s which are nonzero and we keep zero elements of the coupling. Proceeding in a similar way as above, we obtain a new coupling satisfying \eqref{S3} and \eqref{row_nonzero}.

\end{proof}
\section{Proof of Theorem \ref{exist_smoo}}\label{strict}
We reuse mainly the ideas used in the proof of Theorem \ref{mainresult}. We then fix a Lipschitz solution $V$ of \eqref{HJEista} such that $u_i+c_1t-V_i \ge 2$, where $u$ is the solution of \eqref{HJEi} and the associated ergodic constant $c=(c_1,\dots,c_1)$. Withous loss of generality, we can assume that $c_1=0$. We can find, for all $\delta>0$, a function $V^\delta \in C^1(\T^N)^m$ such that
\begin{eqnarray}\label{HJEista_ep}
H_i(x, DV^\delta_i )+\sum_{j=1}^{m}d_{ij}V^\delta_j\le \delta,~~\text{and}~~||V^\delta_i-V_i||_\infty \le \delta,~~\text{for}~~i=1,\dots,m.
\end{eqnarray}
The existence of $V^\delta$ is obtained by the convolution of $V$ with a standard  mollifier. It is worth noticing that the convexity of $H$ is important and that $V^\delta$ is still a $\T^N$ periodic function. 

 Similarly as in the proof of Theorem \ref{mainresult}, we perform the change of function $\exp(w_i^\delta)=u_i-V^\delta_i$. The function $w^\delta$ is the solution of the new system
\begin{eqnarray}\label{inter1_ep}
&&\frac{\partial w_i^\delta}{\partial t}+F_i^\delta(x,w_i^\delta,Dw_i^\delta )+\sum_{j=1}^{m}d_{ij}{\rm exp}(w_j^\delta-w_i^\delta)\\
&+&{\rm exp}(-w_i^\delta)[H_i(x,DV^\delta_i)+ \sum_{j=1}^{m}d_{ij}V^\delta_j]=0.\nonumber
\end{eqnarray} 
with $F_i^\delta(x,w,p)= {\rm exp}(-w)[H_i(x,{\rm exp}(w)p +DV^\delta_i)-H_i(x,DV^\delta_i)]$. We can check $F_i^\delta$ satisfies \eqref{H13} with $\K= \emptyset$. Moreover, the term $\psi(\eta)$ appearing in \eqref{H13} can be chosen independently with $\delta$. The proof relies on the upper semicontinuity of the subdifferentials of convex functions and the strict convexity of the Hamiltonians. The concrete calculation is left to the reader.

We choose a sequence $t_n\to +\infty$ such that $(u(\cdot,t_n+\cdot))_{n}$ converges uniformly to
some function in $W^{1,\infty}(\T^N\times [0,\infty))^m$. Thus $(w^\delta(\cdot,t_n+\cdot))_{n} \to v^\delta\in W^{1,\infty}(\T^N\times [0,\infty))^m$  for all $\delta>0$. Set $\exp(w_i)=u_i-V_i$, we also have $(w(\cdot,t_n+\cdot))_{n} \to v\in W^{1,\infty}(\T^N\times [0,\infty))^m$. It is clear that $v^\delta \to v$ uniformly as $\delta \to 0$. 
We state the key result on these functions. 
\begin{lem}\label{IM4_ep}
Under the conditions of Theorem \ref{exist_smoo} and with $P_{\eta}$'s defined in \eqref{newfunc}, we have
 
$(i)$~~$P_{\eta}[w_i](t)$ converges to the same limit, as a result $P_{\eta}[v_i](.)=c(\eta)$ for all $i=1,\dots,m$. 

$(ii)$~~The $P_{\eta}[w_i](t)$'s attain their maximum at the same point, see \eqref{tothink} for the definition.

$(iii)$ $c(\eta)=0$ for any $\eta >0$.

\end{lem}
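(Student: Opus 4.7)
The strategy is to mimic the proofs of Lemma~\ref{IM2}$(ii)$ and Theorem~\ref{IM1}, but carried out at the perturbed level $w^\delta$, and then to pass to the limit $\delta \to 0$. The system \eqref{inter1_ep} differs from \eqref{inter1} by the extra source term
\begin{equation*}
g_i^\delta(x,t) := \exp(-w_i^\delta)\Bigl[H_i(x,DV_i^\delta)+\sum_{j=1}^m d_{ij}V_j^\delta\Bigr],
\end{equation*}
which by \eqref{HJEista_ep} and the a priori bound $w_i^\delta \geq \log 2 > 0$ (arising from $u_i - V_i \geq 2$ and $\|V-V^\delta\|_\infty \leq \delta$) is uniformly bounded by $C\delta$. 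This is the only new ingredient: every viscosity inequality written for $w^\delta$ will carry an extra $O(\delta)$ error which we track and kill at the end.

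For $(i)$, I would redo Step 1 of the proof of Lemma~\ref{IM2}$(i)$ with $w_i^\delta$ in place of $w_i$, using the fact that $F_i^\delta$ satisfies \eqref{H13}$(i)$. The monotonicity in $w$ still kills the sign-bad term, and the argument produces the subsolution inequality
\begin{equation*}
P'_\eta[w_i^\delta](t)+\sum_{j\neq i}-d_{ij}\rho_{ij}^\delta(t)\bigl\{1-\exp(P_\eta[w_j^\delta]-P_\eta[w_i^\delta])\bigr\} \leq C\delta,
\end{equation*}
so that $N_{\eta,i}^\delta := \exp(P_\eta[w_i^\delta])$ satisfies a perturbed version of \eqref{A10}. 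Reordering the $N_{\eta,i}^\delta$ into $R_1^\delta\geq\dots\geq R_m^\delta$ as in Step~3 of the proof of Lemma~\ref{IM2}$(ii)$ gives a linear system \eqref{A11} with an $O(\delta)$ right-hand side. Invoking Lemma~\ref{LB7} (which remains valid up to an $O(\delta)$ perturbation because the Perron-Frobenius vector $\Lambda$ is independent of $\delta$) and repeating the semicontinuous envelope argument produces $\ov R_k^\delta - \underline R_k^\delta \leq C\delta$. Since $w^\delta \to w$ uniformly in $\delta$, letting $\delta\to 0$ yields that all $P_\eta[w_i]$ converge to a common limit, whence $P_\eta[v_i]\equiv c(\eta)$.

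For $(ii)$, I would copy Steps 1--4 of the proof of Theorem~\ref{IM1}$(i)$ applied to $v$ (using that $v$ solves \eqref{inter1} in the $\delta\to 0$ limit, with no extra source). The monotonicity of $F_i$ in $w$ (inherited from $H_i$ through \eqref{H13}$(i)$, valid because the change of variable $\exp(w_i)=u_i-V_i$ was made with a Lipschitz subsolution) gives
\begin{equation*}
F_i\bigl(x_\tau,v_i(\bar x,\bar t),\tfrac{\bar x-\bar y}{\epsilon^2}\bigr)-F_i\bigl(x_\tau,v_i(\bar y,\bar s),\tfrac{\bar x-\bar y}{\epsilon^2}\bigr)\geq 0,
\end{equation*}
and the coupling identity forces $Q_{\eta,j}(\tau)=P_\eta[v_i](\tau)$ for every $j$ with $d_{ij}\neq 0$, hence \eqref{tothink}.

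For $(iii)$, this is where the hypothesis $\K=\emptyset$ really pays off. The proof of Theorem~\ref{IM1}$(ii)$ required a partial convergence lemma on $\K$ in order to produce the distance bound $d(x_\tau,\K)\geq \beta_\eta$; with $\K=\emptyset$ that step is trivial. Arguing by contradiction with $c(\eta)>0$ at the $\delta$-level, I pick $k$ minimizing $v_k^\delta(x_\tau,s_\tau)$, so $\sum_j d_{kj}\exp(v_j^\delta-v_k^\delta)(\bar x,\bar s)\leq o_\epsilon(1)$, and the second inequality in the $w^\delta$ analog of \eqref{Es10} gives $F_k^\delta(x_\tau,v_k^\delta(\bar y,\bar s),\cdot)\geq \eta-C\delta$. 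Since the $\psi(\eta)$ in \eqref{H13}$(ii)$ for $F_i^\delta$ is uniform in $\delta$ (the point emphasized after \eqref{inter1_ep}, coming from upper semicontinuity of the subdifferentials of the strictly convex $H_i$), the same computation as in Step 5 of Theorem~\ref{IM1}$(ii)$ produces $\psi(\eta/2)c(\eta)\leq C\delta$; letting $\delta\to 0$ contradicts $c(\eta)>0$.

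The main obstacle is verifying that the constant $\psi(\eta)$ in \eqref{H13}$(ii)$ for $F_i^\delta$ can indeed be chosen independently of $\delta$, uniformly for $p$ in the bounded range dictated by the Lipschitz bound of $w^\delta$. This is the qualitative content of strict convexity plus the $C^1$-mollification of a Lipschitz subsolution, and it is the one calculation the text explicitly leaves to the reader; without uniformity in $\delta$ the contradiction step in $(iii)$ collapses.
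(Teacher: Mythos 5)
Your overall strategy -- regularize with $V^\delta$, track the $O(\delta)$ source term, run the arguments of Lemma~\ref{IM2} and Theorem~\ref{IM1}, and let $\delta\to0$ at the end -- matches the paper's, and your treatments of~$(iii)$ and of the uniformity-in-$\delta$ of $\psi(\eta)$ are on point. However there are two places where your order of limits differs from the paper's and causes a genuine gap.

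In part $(i)$ you keep $\delta$ fixed through the reordering step and assert that the argument of Lemma~\ref{IM2}$(ii)$ (Lemma~\ref{LB7} plus the semicontinuous-envelope computation) produces $\ov R_k^\delta-\underline R_k^\delta\le C\delta$ when the ODE subsolution inequality carries an $O(\delta)$ right-hand side. This does not follow: with the perturbation, the conclusion of Lemma~\ref{LB7} degrades to $\frac{d}{dt}\sum_j\Lambda_j R_j^\delta\le C\delta$, i.e.\ $\sum_j\Lambda_j R_j^\delta(t)-C\delta t$ is nonincreasing, and since that argument operates over all $t\to\infty$, the $O(\delta)$ drift can accumulate to an $O(1)$ oscillation of $\limsup-\liminf$ (compare $\sin(\delta t)$, whose derivative is $\le\delta$ but whose $\limsup-\liminf$ is $2$). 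The paper avoids this entirely by reversing the limit order: it first passes $\delta\to0$ \emph{at the subsolution level}. Because $N_{\eta,i}^\delta\to N_{\eta,i}$ uniformly as $\delta\to0$ (thanks to $\|V^\delta-V\|_\infty\le\delta$ and $u_i-V_i\ge2$), the stability result shows that $N_{\eta,i}$ itself is a subsolution of the clean system~\eqref{A10}, and Lemma~\ref{IM2}$(ii)$ applies directly with no perturbation.

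In part $(ii)$ you apply Steps 1--4 of the proof of Theorem~\ref{IM1}$(i)$ directly to $v$, justifying this by ``$v$ solves \eqref{inter1} in the $\delta\to0$ limit, with no extra source.'' But $v$ comes from $\exp(w_i)=u_i-V_i$ with $V$ merely Lipschitz, so the would-be Hamiltonian $F_i(x,w,p)=\exp(-w)\bigl[H_i(x,\exp(w)p+DV_i(x))-H_i(x,DV_i(x))\bigr]$ is not even continuous in $x$ and the equation for $w$ (hence for $v$) is not cleanly formulated as a viscosity PDE -- which is precisely why the paper introduces the $C^1$ mollification $V^\delta$ in the first place. The paper's Steps 5--6 therefore write the viscosity inequalities for $v^\delta$ and the genuine equation~\eqref{inter1_ep}, control the source $a(\cdot,\cdot)$ by~\eqref{rev_estim}, and send $\epsilon\to0$ and then $\delta\to0$ at the end of the inequality chain. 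You should rewrite $(ii)$ in the same way, carrying $\delta$ through the test-function argument rather than presuming a clean limiting PDE for $v$.
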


\begin{proof}[Proof of Theorem \ref{exist_smoo}]
Set  $S(t)u_0=u(x,t)$, it is clear that the semigroup $S(t)$ is nonexpansive in
$C(\T^N;\R^m)$. It is then sufficient to show the result for $u_0\in W^{1,\infty}(\T^N)^m$. Using Proposition \ref{PrReg} for $u_0\in W^{1,\infty}(\T^N)$, we get that $u \in W^{1,\infty}(\T^N \times [0,\infty))^m$. Now, having Lemma \ref{IM4_ep} $(iii)$ in hands, we repeat readily the proof of Theorem \ref{mainresult} to obtain the convergence as desired.
\end{proof}

 We end this section with the proof of Lemma \ref{IM4_ep}.

\begin{proof}[Proof of Lemma \ref{IM4_ep}]
{\it Proof of part $(i)$.} 

\noindent{\it Step 1.} We show that the $P_{\eta}[w_i^\delta]$'s are a subsolution of
\begin{eqnarray}\label{T11}
P'_{\eta}[w_i^\delta]+\sum_{j=1,~~j \neq i}^{m}-d_{ij}\rho^\delta_{ij}(\tau)\big\{1-{\rm exp}[P_{\eta}[w_j^\delta]-P_{\eta}[w_i^\delta]]\big\}-\delta\le 0,~~i=1,\dots,m,
\end{eqnarray}
where $\rho^\delta_{ij}$ are functions defined by
\begin{eqnarray}\label{def of rho2}
\rho^\delta_{ij}(t)=
&& \left\{
\begin{array}{ll}
m_1~~\text{if $P_{\eta}[w_i^\delta](t)\ge P_{\eta}[w_j^\delta](t)$},\\
m_2~~\text{if $P_{\eta}[w_i^\delta](t)< P_{\eta}[w_j^\delta](t)$},
\end{array}
\right.
\end{eqnarray}
for all $i\neq j$, $t>0,$ and $m_1\le m_2$ are positive constants. To do so, let $\Phi \in C^1((0,\infty))$ and $\tau$ be a strict maximum point of $P_{\eta}[w_i^\delta]-\Phi$ over $[\tau-\delta,\tau+\delta]$ for some $\delta >0$.  Argue exactly as Step 1 in the proof of Lemma \ref{IM2}, we may assume that $P_{\eta}[w_i^\delta](\tau)> 0$ to continue.

\noindent{\it Step 2}. Consider, $x,y \in \T^N,t \in [\tau-\delta,\tau+\delta]$ and $s \ge t$ the test function
\begin{eqnarray*}
\Psi(x,y,t,s)=w^\delta_i(x,t)-w^\delta_i(y,s)-2\eta(s-t)-\frac{|x-y|^2}{2\epsilon^2}-\Phi(t).
\end{eqnarray*}
Assume that $\Psi$ achieves its maximum over $\T^N \times \T^N \times  \{(t,s) / t \le s ,t \in [\tau-\delta,\tau+\delta]$\} at  $(\bar{x},\bar{y},\bar{t},\bar{s})$. We obtain some classical estimates when $\epsilon \to 0$,
\begin{eqnarray}\label{Esthu}
&& \left\{
\begin{array}{ll}
\Psi(\bar{x},\bar{y},\bar{t},\bar{s}) \to P_{\eta}[w_i^\delta](\tau)-\Phi(\tau),~~\frac{|\bar{x}-\bar{y}|^2}{2\epsilon^2} \to 0,\\ [1mm]
~~\text{$\bar{t} \to \tau$ since $\tau$ is a strict maximum point of $P_{\eta}[w_i^\delta]-\Phi$ in $[\tau-\delta,\tau+\delta]$},\\
w^\delta_i(\bar{x},\bar{t})-w^\delta_i(\bar{y},\bar{s}) \ge P_{\eta}[w_i^\delta](\bar{t}),~~\bar{s}>\bar{t} \quad \text{since} \quad P_{\eta}[w_i^\delta](\tau)> 0.
\end{array}
\right.
\end{eqnarray}

\noindent{\it Step 3}. Since $w^\delta$ is the solution of \eqref{inter1_ep}, we have
\begin{eqnarray}\label{A12}
&& \left\{
\begin{array}{ll}
\Phi'(\bar{t})-2\eta+F^\delta_i(\bar{x},w^\delta_i(\bar{x},\bar{t}),\frac{\bar{x}-\bar{y}}{\epsilon^2})+\sum_{j=1}^{m}d_{ij}{\rm exp}(w^\delta_j-w^\delta_i)(\bar{x},\bar{t})+a(\bar{x},\bar{t})\le 0,\\
-2\eta+F^\delta_i(\bar{y},w^\delta_i(\bar{y},\bar{s}),\frac{\bar{x}-\bar{y}}{\epsilon^2})+\sum_{j=1}^{m}d_{ij}{\rm exp}(w^\delta_j-w^\delta_i)(\bar{y},\bar{s})+a(\bar{y},\bar{s})\ge 0,
\end{array}
\right.
\end{eqnarray}
where
\begin{eqnarray}\label{axy}
a(x,t):={\rm exp}(-w_i^\delta(x,t))[H_i(x,DV^\delta_i(x))+ \sum_{j=1}^{m}d_{ij}V^\delta_j(x)].
\end{eqnarray}
We  estimate,
\begin{eqnarray*}
a(\bar{x},\bar{t})-a(\bar{y},\bar{s})&=&I_1+I_2\\
&=& [{\rm exp}(-w_i^\delta(\bar{x},\bar{t}))-{\rm exp}(-w_i^\delta(\bar{y},\bar{s}))][H_i(\bar{x},DV^\delta_i(\bar{x}))+ \sum_{j=1}^{m}d_{ij}V^\delta_j(\bar{x})]\\
&+& {\rm exp}(-w_i^\delta(\bar{y},\bar{s}))[H_i(\bar{x},DV^\delta_i(\bar{x}))-H_i(\bar{y},DV^\delta_i(\bar{y}))+ \sum_{j=1}^{m}d_{ij}(V^\delta_j(\bar{x})-V^\delta_j(\bar{y}))].
\end{eqnarray*}
Since $0 \ge {\rm exp}(-w_i^\delta(\bar{x},\bar{t}))-{\rm exp}(-w_i^\delta(\bar{y},\bar{s})) \ge -1$, and
$V^\delta$ is a subsolution of~\eqref{HJEista_ep}, we get that $I_1=[{\rm exp}(-w_i^\delta(\bar{x},\bar{t}))-{\rm exp}(-w_i^\delta(\bar{y},\bar{s}))][H_i(\bar{x},DV^\delta_i(\bar{x}))+ \sum_{j=1}^{m}d_{ij}V^\delta_j(\bar{x})] \ge -\delta$.
It is clear that $I_2$ tends to 0 when $\epsilon\to 0$ since $V^\delta \in C^1(\T^N)^m$. Hence, 
\begin{eqnarray}\label{rev_estim}
a(\bar{x},\bar{t})-a(\bar{y},\bar{s})\ge -\delta+o_\e(1).
\end{eqnarray}

Using \eqref{H13} and the fact that $w_i(\bar{x},\bar{t}) \ge w_i(\bar{y},\bar{s})\ge M_{\eta,i}(\bar{t}) \ge 0$, we have
\begin{eqnarray*}
F^\delta_i(\bar{x},w^\delta_i(\bar{x},\bar{t}),\frac{\bar{x}-\bar{y}}{\epsilon^2})-F^\delta_i(\bar{x},w^\delta_i(\bar{y},\bar{s}),\frac{\bar{x}-\bar{y}}{\epsilon^2}) \ge 0.
\end{eqnarray*}
Moreover,
\begin{eqnarray*}
&&\sum_{j=1}^{m}d_{ij}{\rm exp}(w^\delta_j-w^\delta_i)(\bar{x},\bar{t})-\sum_{j=1}^{m}d_{ij}{\rm exp}(w^\delta_j-w^\delta_i)(\bar{x},\bar{s})\\
&\ge& \sum_{j \neq i}-d_{ij}{\rm exp}(w^\delta_j-w^\delta_i)(\bar{x},\bar{s})\big\{1-{\rm exp}[P_{\eta}[w_j^\delta](\tau)-P_{\eta}[w_i^\delta](\tau)]\big\}+o_\e(1)\\
&\ge& \sum_{j \neq i}-d_{ij}\rho^\delta_{ij}(\tau)\big\{1-{\rm exp}[P_{\eta}[w_j^\delta](\tau)-P_{\eta}[w_i^\delta](\tau)]\big\}+o_\e(1),
\end{eqnarray*}
where $\rho^\delta_{ij}$ is defined as in~\eqref{def of rho2}
with $$m_1=\inf_{x \in \T^N,~~s>0,~~\delta>0,~~1 \le i,j \le m}{\rm exp}(w^\delta_j-w^\delta_i)(x,s)>0,$$ $$m_2=\sup_{x \in \T^N,~~s>0,~~\delta>0,~~1 \le i,j \le m}{\rm exp}(w^\delta_j-w^\delta_i)(x,s)<\infty,$$
which are well-defined thanks to the boundedness of $w_i$.

Therefore, by letting $\epsilon$ tend to 0 after subtracting both sides in \eqref{A12}, we get
\begin{eqnarray*}
\Phi'(\tau)+\sum_{j=1,~~j \neq i}^{m}-d_{ij}\rho^\delta_{ij}(\tau)\big\{1-{\rm exp}[P_{\eta}[w_j^\delta](\tau)-P_{\eta}[w_i^\delta](\tau)]\big\}-\delta\le 0.
\end{eqnarray*}
\noindent{\it Step 4}. We deduce easily that $N^\delta_{\eta,j}:={\exp(P_{\eta}[w_j^\delta])}$ is a subsolution of
\begin{eqnarray*}
N'^\delta_{\eta,i}+\sum_{j=1,j \neq i}^{m}d_{ij}\rho^\delta_{ij}[N^\delta_{\eta,j}-N^\delta_{\eta,i}]-C \delta \le 0,~~i=1,\dots,m.
\end{eqnarray*}

Since $N^\delta_{\eta,i}\to N_{\eta,i}:={\exp(P_{\eta}[w_i])}$ as $\delta \to 0$, by the stability result, $N_{\eta,i}$ is a subsolution of
\begin{eqnarray*}
N'_{\eta,i}+\sum_{j=1,j \neq i}^{m}d_{ij}\rho_{ij}[N_{\eta,j}-N_{\eta,i}]\le 0,~~i=1,\dots,m,
\end{eqnarray*} 
where $\rho_{ij}$ are functions defined by
\begin{eqnarray*}
\rho_{ij}(t)=
&& \left\{
\begin{array}{ll}
m_1~~\text{if $P_{\eta}[w_i](t)\ge P_{\eta}[w_j](t)$},\\
m_2~~\text{if $P_{\eta}[w_i](t)< P_{\eta}[w_j](t)$}.
\end{array}
\right.
\end{eqnarray*}
Thanks to Lemma \ref{IM2}, $N_{\eta,i}(t)$ converges to the same limit as $t$ tends to infinity. Hence $P_{\eta}[v_i](t)=c(\eta)$, where $c(\eta)$ is independent of $i$ and $t$.

{\it Proof of part $(ii)$.}

\noindent{\it Step 5.} Fix $\tau>0$. If $P_{\eta}[v_i](\tau)= 0$, then we finish the proof since we can choose $s_\tau=\tau$ and any $x_\tau$ to fulfill the requirement. We then assume that $P_{\eta}[v_i](\tau)> 0$ and that $P_{\eta}[v_i](\tau)$ attains its maximum at $x_\tau,~~s_\tau$. Consider, $x,y \in \T^N,t \in(0,\infty)$ and $s \ge t$ the test function
\begin{eqnarray*}
\Psi(x,y,t,s)=v^\delta_i(x,t)-v^\delta_i(y,s)-2\eta(s-t)-|x-x_\tau|^2-|t-\tau|^2-|s-s_\tau|^2-\frac{|x-y|^2}{2\epsilon^2}.
\end{eqnarray*}
The function $\Psi^{i,\epsilon}$ achieves its maximum over $\T^N \times \T^N \times  \{(t,s) / 0 \le t \le s $\} at  $(\bar{x},\bar{y},\bar{t},\bar{s})$. We obtain some classical estimates,
\begin{eqnarray*}
&& \left\{
\begin{array}{ll}
\frac{|\bar{x}-\bar{y}|^2}{2\epsilon^2} \to 0~~\text{ when $\epsilon \to 0$},\\

{\rm lim}_{\delta \to 0}{\rm lim}_{\e \to 0}(\bar{x},\bar{s},\bar{t})=(x_\tau,s_\tau,\tau),\\

v^\delta_i(\bar{x},\bar{t})\ge v^\delta_i(\bar{y},\bar{s}),~~\bar{s}>\bar{t} \quad \text{for $\e,~~\delta$ are small enough since}~~P_{\eta}[v_i](\tau)> 0.
\end{array}
\right.
\end{eqnarray*}
{\it Step 6.} Since $v^\delta$ is the solution of \eqref{inter1_ep}, we have
\begin{eqnarray}\label{Es21}
&& \left\{
\begin{array}{ll}
-2\eta+F^\delta_i(x_\tau,v^\delta_i(\bar{x},\bar{t}),\frac{\bar{x}-\bar{y}}{\epsilon^2})+\sum_{j=1}^{m}d_{ij}{\rm exp}(v^\delta_j-v^\delta_i)(\bar{x},\bar{t})
+a(\bar{x},\bar{t})\le o_\e(1),\\
-2\eta+F^\delta_i(x_\tau,v^\delta_i(\bar{y},\bar{s}),\frac{\bar{x}-\bar{y}}{\epsilon^2})+\sum_{j=1}^{m}d_{ij}{\rm exp}(v^\delta_j-v^\delta_i)(\bar{x},\bar{s})+a(\bar{y},\bar{s}) \ge o_\e(1),
\end{array}
\right.
\end{eqnarray}
where $a(x,t)$ is defined in \eqref{axy}. Arguing as Step 4 in the proof of Theorem \ref{IM1} with taking \eqref{rev_estim} into account (let $\epsilon$ tend to 0 and then $\delta$ to 0), we obtain that $Q_{\eta,j}(\tau)=P_{\eta}[v_i](\tau)$, for all $j$, with $Q_{\eta,j}(\tau):=v_j(x_\tau,\tau)-v_j(x_\tau,s_\tau)-2\eta(s_\tau-\tau)$.

\noindent {\it Proof of part $(iii)$.} We repeat Step 5, 6 with a small refinement. For any fixed $\tau>0$, we call $(x_\tau,s_\tau)$ the common minimum point of the $P_{\eta}[v_i](\tau)$'s. We choose $i \in \{1,\dots,m\}$ such that 
\begin{eqnarray*}
v_i(x_\tau,s_\tau)=\min_{j=1,\dots,m}v_j(x_\tau,s_\tau).
\end{eqnarray*}
Since ${\rm lim}_{\delta \to 0}{\rm lim}_{\e \to 0}(\bar{x},\bar{s})=(x_\tau,s_\tau)$, we get
\begin{eqnarray*}
\sum_{j=1}^{m}d_{ij}{\rm exp}(v_j-v_i)(\bar{x},\bar{s})\le o_\e(1)+o_\delta(1).
\end{eqnarray*}
This gives
\begin{eqnarray*}
F^\delta_i(\bar{x},v^\delta_i(\bar{y},\bar{s}),\frac{\bar{x}-\bar{y}}{\epsilon^2}) \ge \eta/2>0~~\text{for $\epsilon,~~\delta$ are small enough}.
\end{eqnarray*}
From \eqref{H13}$(ii)$, we obtain
\begin{eqnarray*}
F^\delta_i(\bar{x},v^\delta_i(\bar{x},\bar{t}),\frac{\bar{x}-\bar{y}}{\epsilon^2})-F^\delta_i(\bar{x},v^\delta_i(\bar{y},\bar{s}),\frac{\bar{x}-\bar{y}}{\epsilon^2}) \ge \psi(\eta)(v^\delta_i(\bar{x},\bar{t})-v^\delta_i(\bar{y},\bar{s})).
\end{eqnarray*}
Recall that $\psi(\eta)$ does not depend on $\delta$. With the same computation as \eqref{T1}, we have
\begin{eqnarray*}
\sum_{j=1}^{m}d_{ij}{\rm exp}(v^\delta_j-v^\delta_i)(\bar{x},\bar{t})-\sum_{j=1}^{m}d_{ij}{\rm exp}(v^\delta_j-v^\delta_i)(\bar{x},\bar{s})\ge o_\e(1)+o_\delta(1).
\end{eqnarray*}
Therefore, subtracting both sides in \eqref{Es21} and taking \eqref{rev_estim} into account, we get
\begin{eqnarray*}
\psi(\eta)(v^\delta_i(\bar{x},\bar{t})-v^\delta_i(\bar{y},\bar{s}))\le o_\e(1)+o_\delta(1).
\end{eqnarray*}
Letting $\epsilon$ tend to 0 and then $\delta$ to 0, we obtain that $\psi(\eta)c(\eta)\le 0$. It is a contradiction.
\end{proof}

\section{Proof of Theorem \ref{largenew}}\label{nr}
With $\mathcal{S}$ defined in \eqref{samemin}, we will show that 
\begin{eqnarray}\label{erco}
c_1=-\min_{x \in \T}\frac{\sum_{i=1}^m \Lambda_i f_i(x)}{\sum_{i=1}^m \Lambda_i}=-\frac{\sum_{i=1}^m \Lambda_i f_i(x_0)}{\sum_{i=1}^m \Lambda_i}~~\text{for any $x_0 \in \mathcal{S}$},
\end{eqnarray}
where the vector $\Lambda=(\Lambda_1,\dots,\Lambda_m) >0$ is given by Lemma~\ref{lem-rang}. To do so, we need
\begin{lem}\label{del_inver}
For any matrix $D$ satisfying \eqref{irr}, the matrix $E$ obtaining from $D$ after canceling the $m$th row and $m$th column is invertible.
\end{lem}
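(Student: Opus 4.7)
The strategy is a discrete maximum-principle argument, exploiting the sign pattern of $D$ and irreducibility, exactly in the spirit of the arguments used throughout the paper.

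Suppose that $Ex = 0$ for some $x = (x_1, \dots, x_{m-1}) \in \R^{m-1}$. I want to show $x = 0$. Extend $x$ by setting $x_m = 0$ and pick an index $i_0 \in \{1, \dots, m-1\}$ where $x_{i_0} = \max_{1 \le j \le m-1} x_j$. Using the row-sum condition $d_{i_0 i_0} = -\sum_{j \neq i_0} d_{i_0 j}$ in the equation $(Ex)_{i_0} = 0$, one can rewrite
\begin{equation*}
-d_{i_0 m}\, x_{i_0} \; + \; \sum_{\substack{1 \le j \le m-1 \\ j \neq i_0}} d_{i_0 j}\,(x_j - x_{i_0}) \;=\; 0.
\end{equation*}
Every term on the left is nonnegative: $-d_{i_0 m} \ge 0$ by \eqref{S3}, and for $j \neq i_0$ one has $d_{i_0 j} \le 0$ while $x_j - x_{i_0} \le 0$ by maximality. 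Hence each term must vanish individually.

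Now assume for contradiction that $x_{i_0} > 0$. The vanishing of each summand then forces $d_{i_0 m} = 0$ and $x_j = x_{i_0}$ for every $j \le m-1$ with $d_{i_0 j} \neq 0$. Define
\begin{equation*}
\mathcal{I} \;=\; \bigl\{\, j \in \{1,\dots,m-1\} \,:\, x_j = x_{i_0}\,\bigr\}.
\end{equation*}
Then $i_0 \in \mathcal{I}$ and the same maximum-principle argument applied to each $i \in \mathcal{I}$ shows that $d_{im} = 0$ for all $i \in \mathcal{I}$ and that any index $j \le m-1$ with $d_{ij} \neq 0$ for some $i \in \mathcal{I}$ already belongs to $\mathcal{I}$. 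This means $d_{ij} = 0$ for every $i \in \mathcal{I}$ and every $j \notin \mathcal{I}$, contradicting the irreducibility assumption \eqref{irr} applied to the nonempty proper subset $\mathcal{I} \subsetneq \{1, \dots, m\}$.

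Therefore $\max_j x_j \le 0$, and applying the same reasoning to $-x$ (i.e. to the index where $x$ attains its minimum) gives $\min_j x_j \ge 0$. Hence $x = 0$, proving that $E$ is invertible. The only slightly delicate step is the iterative propagation: one must verify that the "level set" $\mathcal{I}$ is stable under the connectivity graph of $D$ restricted to $\{1,\dots,m-1\}$ and is disconnected from $\{m\}$, so that irreducibility can be triggered; everything else is a routine sign-count.
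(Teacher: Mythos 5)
Your proof is correct, but it takes a genuinely different route from the paper's. The paper's argument is short: given $Ex=0$ it sets $y=(x,0)$ and appeals to Lemma~\ref{lem-rang}, which already packages the rank-$(m-1)$ and kernel information about $D$ (under the hood one needs the positive left null vector $\Lambda$ from that lemma to see that the $m$th row of $Dy$ also vanishes, and then the kernel being spanned by $(1,\dots,1)$ together with $y_m=0$ forces $y=0$). Your argument, by contrast, is a self-contained discrete maximum principle: pick the index where the extended vector $(x,0)$ is maximal, rewrite the $i_0$th equation using the zero-row-sum property, observe that after assuming $x_{i_0}>0$ every summand is nonnegative hence zero, propagate the level set $\mathcal{I}$ of maximizers along the connectivity graph, and contradict irreducibility because $\mathcal{I}$ is a proper nonempty subset not containing $m$. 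What your approach buys is independence from Lemma~\ref{lem-rang}: you only use the sign structure \eqref{S3} and irreducibility \eqref{irr} directly, which makes the lemma reusable in settings where the Perron--Frobenius machinery behind Lemma~\ref{lem-rang} might not have been established first. One small presentational slip: you assert ``every term on the left is nonnegative'' \emph{before} you assume $x_{i_0}>0$, but the nonnegativity of $-d_{i_0 m}\,x_{i_0}$ requires knowing the sign of $x_{i_0}$; just move the contradiction hypothesis $x_{i_0}>0$ one sentence earlier and the logic is clean.
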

\begin{proof}[Proof of Lemma \ref{del_inver}]
Assume by contradiction that there exists $x=(x_1,...,x_{m-1}) \neq 0$ such that $Ex=0$. It is clear that $y=(x,0)$ satisfies $Dy=0$, then Lemma \ref{lem-rang} yields that $x=0$. This is a contradiction.
\end{proof}
\begin{lem}\label{construct}
There exists a constant solution of the system 
\begin{eqnarray}\label{constant}
\sum_{j=1}^{m}d_{ij}u_j=b_i-\frac{\sum_{i=1}^m \Lambda_i b_i}{\sum_{i=1}^m \Lambda_i},
\quad i=1,\dots,m.
\end{eqnarray}
\end{lem}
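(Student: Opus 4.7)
The plan is to recognize this as a pure linear algebra statement: finding a constant solution amounts to solving the linear system $Du=c$ in $\R^m$, where $c=(c_1,\dots,c_m)$ denotes the right-hand side vector
\[
c_i:=b_i-\frac{\sum_{k=1}^m\Lambda_k b_k}{\sum_{k=1}^m\Lambda_k},\qquad i=1,\dots,m.
\]
The right-hand side was precisely designed so that the compatibility condition for the range of $D$ is satisfied. Indeed,
\[
\sum_{i=1}^m\Lambda_i c_i=\sum_{i=1}^m\Lambda_i b_i-\frac{\sum_{k=1}^m\Lambda_k b_k}{\sum_{k=1}^m\Lambda_k}\sum_{i=1}^m\Lambda_i=0,
\]
i.e.\ $\Lambda^T c=0$, so $c$ is orthogonal to $\ker D^T=\mathrm{span}\{\Lambda\}$ (by Lemma~\ref{lem-rang}), hence belongs to $\mathrm{range}(D)$ by the Fredholm alternative.

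To make this concrete (and to invoke the just-proved Lemma~\ref{del_inver} explicitly, as the author clearly intends), I would impose the normalization $u_m=0$ and consider the remaining $(m-1)\times(m-1)$ system
\[
\sum_{j=1}^{m-1}d_{ij}u_j=c_i,\qquad i=1,\dots,m-1.
\]
The matrix of this reduced system is exactly the matrix $E$ of Lemma~\ref{del_inver}, which is invertible; thus $(u_1,\dots,u_{m-1})$ is uniquely determined.

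It remains to check that the $m$-th equation $\sum_{j=1}^{m-1}d_{mj}u_j=c_m$ is automatically satisfied. For this, multiply the equation $Du=c$ (now verified only for its first $m-1$ components) by $\Lambda$ on the left. Since $D^T\Lambda=0$ gives $\Lambda^T D=0$, we get
\[
0=\Lambda^T(Du)=\sum_{i=1}^{m-1}\Lambda_i c_i+\Lambda_m(Du)_m,
\]
while the compatibility $\Lambda^T c=0$ rewrites as $\sum_{i=1}^{m-1}\Lambda_i c_i=-\Lambda_m c_m$. Comparing these identities and using $\Lambda_m>0$ yields $(Du)_m=c_m$, which is the missing $m$-th equation. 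The constant vector $(u_1,\dots,u_{m-1},0)$ is thus a solution of~\eqref{constant}. There is no serious obstacle here; the only subtlety is noticing that the right-hand side was built precisely to lie in $\mathrm{range}(D)$, after which everything is linear algebra.
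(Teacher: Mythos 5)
Your proof is correct and follows essentially the same route as the paper: both set $u_m=0$, invert the reduced $(m-1)\times(m-1)$ matrix via Lemma~\ref{del_inver}, and then recover the $m$-th equation by multiplying through by $\Lambda$ and using $D^T\Lambda=0$ together with the built-in compatibility $\Lambda^T c=0$. Your opening Fredholm-alternative remark is a pleasant conceptual framing, but the concrete computation is the same one the paper carries out.
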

\begin{proof}[Proof of Lemma \ref{construct}]
Set $a:=\frac{\sum_{i=1}^m \Lambda_i b_i}{\sum_{i=1}^m \Lambda_i}.$ From Lemma \ref{del_inver}, we can find $(u_1,...,u_{m-1})$ satisfying
\begin{eqnarray*}
\sum_{j=1}^{m-1}d_{ij}u_j=b_i-a,~~ i=1,\dots,m-1.
\end{eqnarray*}
Set $u_m=0$, we have
\begin{eqnarray}\label{m-1}
\sum_{j=1}^{m}d_{ij}u_j=b_i-a,~~ i=1,\dots,m-1.
\end{eqnarray}
We claim that \eqref{m-1} holds for $i=m$. Multiplying the $i$th equation in \eqref{m-1} 
by $\Lambda_i$ and summing all equations for $i=1,\dots,m-1$, we obtain 
\begin{eqnarray*}
\sum_{i=1}^{m-1} (\sum_{j=1}^{m}\Lambda_i d_{ij}u_j)
=\sum_{i=1}^{m-1}\Lambda_i b_i-\sum_{i=1}^{m-1}\Lambda_i a,
\quad \text{i.e,} \quad
\sum_{j=1}^{m}-\Lambda_m d_{mj} u_j=\Lambda_m(a- b_m),
\end{eqnarray*}
which is exactly what we need.
\end{proof}
From the uniqueness of the ergodic constant and thanks to Lemma \ref{construct}, we obtain the formula \eqref{erco} for the ergodic constant. We now prove Theorem \ref{largenew}.


\begin{proof}[Proof of Theorem \ref{largenew}]
Arguing as in the proof of Theorem \ref{exist_smoo},  we can assume that $u \in W^{1,\infty}(\T^N \times [0,\infty))^m$. From Lemma \ref{construct}, there exists a constant solution $(a_1,...,a_m)$ for~\eqref{constant} with $b_i=f_i(x_0),~~x_0 \in \mathcal{S}$ for $i=1,\dots,m$. So, $w_i(x,t)=u_i(x,t)+c_1t-a_i$ solves
\begin{eqnarray*}
\frac{\partial w_i}{\partial t}+  F_i(x, Dw_i )+\sum_{j=1}^{m}d_{ij}w_j=f_i(x)-f_i(x_0)
& (x,t)\in \T^N\times (0,+\infty),~~x_0\in \mathcal{S},~~i=1,\dots,m.
\end{eqnarray*}
Since this new system satisfies the conditions of Theorem \ref{mainresult}, we get the convergence of $w_i(.,t)$ and hence of $u_i(.,t)$.
\end{proof}

\section{Proof of Theorem \ref{identical}}\label{bs2}

We require $D$ to be {\em nonzero}, i.e.,
\begin{eqnarray}\label{nonzero}
\text{For any~~$i,j = 1,\dots,m$, there exists $k\in\{1,\cdots ,m\}$ such that $d_{ik}d_{jk}\neq 0$.}
\end{eqnarray}
The main consequence of systems whose hamiltonians are identical is the following result, the proof of which is given in Appendix.
\begin{lem}\label{samelimit}
Under the assumptions of Theorem \ref{identical}, we have
\begin{eqnarray*}
\limsup_{t \to \infty}\max_{x \in \T^N,~~i,j=1,\dots,m}|u_i(x,t)-u_j(x,t)| =0.
\end{eqnarray*}
\end{lem}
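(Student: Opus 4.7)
The plan is to prove the stronger quantitative statement that
\begin{equation*}
M(t) := \max_{x \in \T^N,\, i,j \in \{1,\dots,m\}} \bigl(u_i(x,t) - u_j(x,t)\bigr)
\end{equation*}
is nonincreasing and tends to $0$ as $t \to \infty$; since swapping the roles of $i$ and $j$ gives $\max_{x,i,j}|u_i - u_j|=M$, this implies the lemma.

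First I would establish the monotonicity of $M$ by a maximum-principle argument at an attainment point $(x_0,i_0,j_0)$ of $M(t_0)$. The maximizing property forces $u_{i_0}(x_0,t_0)=\max_k u_k(x_0,t_0)$ and $u_{j_0}(x_0,t_0)=\min_k u_k(x_0,t_0)$. At the spatial maximum of $u_{i_0}-u_{j_0}$ the gradients of the two solutions formally coincide, so the identical Hamiltonians cancel when the two equations of~\eqref{HJEi} are subtracted; using $\sum_k d_{ik}=0$, the surviving coupling term rewrites as
\begin{eqnarray*}
\sum_k (d_{i_0 k}-d_{j_0 k})(x_0)\,u_k(x_0,t_0) &=& \sum_{k \neq i_0}|d_{i_0 k}(x_0)|\bigl(u_{i_0}-u_k\bigr)(x_0,t_0)\\
&&+\; \sum_{k \neq j_0}|d_{j_0 k}(x_0)|\bigl(u_k-u_{j_0}\bigr)(x_0,t_0) \;\ge\; 0,
\end{eqnarray*}
each summand being a product of two quantities of the same sign. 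Hence $\partial_t(u_{i_0}-u_{j_0})(x_0,t_0)\le 0$, from which $M'(t_0)\le 0$ follows by a standard envelope argument.

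Next, $M$ being nonincreasing and nonnegative, $M(t)\to M_\infty\ge 0$. By uniform equicontinuity of the $u_i$'s, the family $\{u(\cdot,t_n+\cdot)\}$ is relatively compact, so along some $t_n\to\infty$ it converges uniformly on compacts to a viscosity solution $\omega$ of the same system with $\max_{x,i,j}(\omega_i-\omega_j)(x,t)\equiv M_\infty$. Now assume $M_\infty>0$ for contradiction. Applied to $\omega$, the monotonicity must be an equality at every time, which forces the two sums above to vanish \emph{termwise} at each attainment point $(x_0,i_0,j_0)$. Thus whenever $d_{i_0 k}(x_0)\ne 0$ one has $\omega_k(x_0,t)=\omega_{i_0}(x_0,t)$, and symmetrically $d_{j_0 k}(x_0)\ne 0$ forces $\omega_k(x_0,t)=\omega_{j_0}(x_0,t)$. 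By~\eqref{nonzero} some $k$ satisfies $d_{i_0 k}(x_0)d_{j_0 k}(x_0)\ne 0$; regardless of whether $k\in\{i_0,j_0\}$ or not, combining the two identities yields $\omega_{i_0}(x_0,t)=\omega_{j_0}(x_0,t)$, contradicting $M_\infty>0$.

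The hard part will be making the formal ``gradients coincide at the spatial max'' precise in the viscosity sense: since $u_{j_0}$ is not differentiable, one cannot plug it into a test function for $u_{i_0}$. I expect the standard remedy, a doubling of variables in $x$ (and possibly $t$), to go through routinely, producing the inequality $M'(t_0)\le 0$ in the distributional sense. Note that the scalar-equation convergence hypothesis of Theorem~\ref{identical} plays no role in the lemma itself; it enters only at the level of the theorem, to promote $u_i-u_j\to 0$ into convergence of each $u_i$ via the asymptotically-scalar equation satisfied by any common limit.
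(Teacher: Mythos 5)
Your proof is correct in substance and takes a genuinely different route from the paper. The paper proves the stronger quantitative statement that $\Phi(t):=\max_{i\neq j,\,x}(u_i-u_j)(x,t)$ is a viscosity subsolution of the ODE $\Phi' + \delta\Phi \le 0$, with an explicit constant
$\delta=\min_{x,\,i\neq j,\,\mathcal{I}\ni j,\,i\notin\mathcal{I}}-\big[\sum_{k\in\mathcal{I}}d_{ik}(x)+\sum_{k\in\mathcal{I}^c}d_{jk}(x)\big]>0$, whose positivity is exactly where \eqref{nonzero} enters; it then concludes $\Phi(t)\le\Phi(0)e^{-\delta t}$ by comparison. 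Your argument instead establishes only the soft fact $\Phi'\le 0$ (your monotonicity step, which is the same formal cancellation of identical Hamiltonians at a space--index maximum), and then compensates with a compactness--rigidity argument: extract a limiting solution $\omega$ along $t_n\to\infty$ with constant gap $M_\infty$, observe that constancy forces termwise vanishing of the nonnegative coupling contributions at some attainment point, and invoke \eqref{nonzero} to produce an index $k$ linking $\omega_{i_0}$ and $\omega_{j_0}$ and hence a contradiction. So both proofs hinge on the same algebraic identity $\sum_k(d_{i_0k}-d_{j_0k})u_k=\sum_{k\neq i_0}|d_{i_0k}|(u_{i_0}-u_k)+\sum_{k\neq j_0}|d_{j_0k}|(u_k-u_{j_0})\ge 0$, and both invoke \eqref{nonzero}, but where the paper converts it into a strict Gronwall constant (giving an exponential rate), you use it only as a strong-maximum-principle-type nondegeneracy for the limiting profile (giving convergence with no rate). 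Your version is thus weaker quantitatively but arguably more robust, since it avoids computing $\delta$. Two small caveats: the termwise vanishing holds ``at some attainment point'' (whichever the viscosity test produces), not a priori ``at each attainment point''---which is what you need anyway; and your deferred doubling-of-variables step is genuinely the same technical debt the paper also acknowledges with ``formal proof / approximation techniques,'' so you should expect comparable (not trivial) work there because the max runs over the pair of indices as well as over $x$.
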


\begin{proof}[Proof of Theorem \ref{identical}]
For each $n \ge 0$, call $\Phi^n$ be the solution to the equation: 
\begin{equation}\label{evo_equa}
\left\{
  \begin{array}{ll}\displaystyle
\frac{\partial \Phi^n}{\partial t}+  H(x, D\Phi^n )=0
   & (x,t)\in  \T^N\times (0,+\infty), \\[5pt]
  \Phi^n(x,0)=u_1(x,n)&x\in \T^N,
  \end{array}
\right.
\end{equation}
Note that $(\Phi^n,\dots,\Phi^n)$ is a solution of \eqref{HJEi} with initial conditions $(u_1(.,n),\dots,u_1(.,n))$. Applying the comparison principle for the system \eqref{HJEi}, we obtain
\begin{eqnarray}
&&\sup_{i=1,\dots,m,~~x \in \T^N,~~t\ge 0}|u_i(x,t+n)-\Phi^n(x,t)| \le \sup_{1\le i \le m,~~x \in \T^N}|u_i(x,n)-u_1(x,n)|. \label{aa}\\
&&\sup_{x \in \mathbb{T}^N,~~t \ge 0}|\Phi^n(x,t)-\Phi^{n+1}(x,t)| \le \sup_{x \in \mathbb{T}^N}|u_1(x,n)-u_1(x,n+1)|. \label{cc}
\end{eqnarray}

From the convergence of the solutions of \eqref{evo_equa}, we have
\begin{eqnarray}
\Phi^n(.,t) \to V^n(.)~~\text{in}~~ C(\T^N)~~\text{as}~t \to \infty,~~ \text{for some} ~~V^n\in {\rm BUC}(\T^N)\label{ee}
\end{eqnarray}
and $V^n$ is a solution of \eqref{ergo_sing}. From \eqref{cc} and \eqref{ee} we infer that $(V^n)_n$ is a Cauchy sequence in ${\rm BUC}(\T^N)$ and therefore
\begin{eqnarray}\label{ff}
V^n(.) \to V(.)~~ \text{in}~~ C(\T^N),~~ \text{for some} ~~V\in {\rm BUC}(\T^N) .
\end{eqnarray}
By the stability result, $V$ is still a solution of \eqref{ergo_sing}. Using \eqref{ee}, we take $\limsup^*$ with respect to $t$ on both sides of \eqref{aa}
\begin{eqnarray*}
\overline{u_i}(x) \le V^n(x)+\sup_{i=1,\dots,m,~~x \in \T^N}|u_i(x,n)-u_1(x,n)|, ~~\text{for all $i$ and}~~ x \in \T^N.
\end{eqnarray*}
From \eqref{ff} and Lemma \ref{samelimit}, we let $n$ tend to infinity in the above inequality to obtain
\begin{eqnarray*}
\overline{u_i}(x) \le V(x)~~\text{for all $i$ and}~~ x \in \T^N.
\end{eqnarray*}
Similarly, we get $\underline{u_i}(x) \ge V(x)$. Hence, $u_i(.,t) \to V(.)$ as $t$ tends to infinity.
\end{proof}
\section{Appendix}
\subsection{The control interpretation.}
For the reader's convenience, we give a formal link between optimal
control of hybrid systems with pathwise deterministic
trajectories with random switching and
Hamilton-Jacobi systems~\eqref{HJEi} with convex Hamiltonians.

Consider the controlled random evolution process
$(X_t,\nu_t)$ with dynamics
\begin{equation}\label{REV}
\left\{
\begin{array}{l}
\dot X_t =  b_{\nu_t}(X_t ,a_t), \ \ t> 0,\\
(X_0,\nu_0)=(x,i) \in \T^N\times \{1,\dots,m\},
\end{array}
\right.
\end{equation}
where the control law  $a:[0,\infty)\to A$ is a measurable function
($A$ is a subset of some metric space),
$b_i\in L^\infty(\T^N\times A; \R^N),$  satisfies
\begin{eqnarray}
|b_i(x,a)-b_i(y,a)|\leq C|x-y|,\qquad x,y\in\T^N, \ a\in A, \ 1\leq i\leq m.
\end{eqnarray}
For every $a_t$ and matrix of probability transition
$G=(\g_{ij})_{i,j}$ satisfying $\sum_{j\not= i}\g_{ij}=1$
for $i\not= j$ and $\g_{ii}=-1,$
there exists a solution $(X_t,\nu_t),$ where $X_t:[0,\infty)\to \T^N$
is piecewise $C^1$ and $\nu(t)$  is a continuous-time Markov chain
with state space $\{1,\dots,m\}$ and probability transitions given by
\[
\P\{\nu_{t+\Delta t}=j\,|\,
\nu_t=i\}=\g_{ij}\Delta t+o(\Delta t)
\]
for $j\neq i.$

We introduce the value functions of the optimal control problems
\begin{equation}\label{Value}
    u_i(x,t)=\inf_{a_t\in L^\infty([0,t],A)}\E_{x,i}\big\{\int_0^t \ell_{\nu_s}(X_s,a_s)ds
+u_{0,\nu_t} (X_t)\big\},
\quad i=1,\dots m,
\end{equation}
where $\E_{x,i}$ denote the expectation of a trajectory starting at
$x$ in the mode $i,$ and the functions $u_{0,i}: \T^N\to \R$, $\ell_i: \T^N \times A\to \R$ are continuous.

It is possible to show that the following dynamic programming principle
holds:
\begin{eqnarray*}
u_i (x,t)= \mathop{\rm inf}_{a_t\in L^\infty([0,t],A)} \E_{x,i} \big\{
\int_0^h \ell_{\nu_s}(X_s,a_s)ds + u_{\nu_h}(X_h,t-h)
\big\} \qquad 0<h\leq t.
\end{eqnarray*}
Then the functions $u_i$ satisfy the system
\begin{equation*}
\left\{
  \begin{array}{ll}\displaystyle
\frac{\partial u_i}{\partial t}+ \mathop{\rm sup}_{a\in A}
[-\langle b_i(x,a), Du_i\rangle- \ell_i(x,a)]
+\sum_{j\not= i}\g_{ij}(u_i-u_j)=0
   & (x,t)\in  \T^N\times (0,+\infty), \\[5pt]
  u_{i}(x,0)=u_{0,i}(x)&x\in \T^N,
  \end{array}
i=1, \cdots m,
\right.
\end{equation*}
which has the form \eqref{HJEi} by setting
$H_i(x,p)= \mathop{\rm sup}_{a\in A}
[-\langle b_i(x,a), p\rangle-\ell_i(x,a)]$ and $d_{ii}=\sum_{j\not= i} \g_{ij}=1$
and $d_{ij}= -\g_{ij}$ for $j\not=i.$
\begin{rem} \ \\
(i) Assume $\ell_i(x,a)=f_i(x)$ where the $f_i$'s satisfy \eqref{samemin}. If the following controllability assumption is satisfied: 
for every $i,$
there exists $r>0$ such that for any $x\in\T^N$, the ball $B(0,r)$ is contained in $\overline{\textrm{co}}\{b_i(x,A)\}$.
Then, Theorem~\ref{largenew} holds. Roughly speaking, it means that
the optimal strategy is to drive the trajectories towards a point $x^*$ of
$\mathcal{S}$ and then not to move anymore (except maybe a small time before $t$).
This is suggested by the fact that all the $f_i$'s attain their minimum 
at $x^*$ and, at such point, the running cost is smallest.\\
(ii) It is also possible to consider differential games with random switchings to encompass system~~\eqref{HJEi} with nonconvex Hamiltonians.\\
(iii) More rigorous dynamical interpretations of system~~\eqref{HJEi} are given in \cite{mt12b}.
\end{rem}

\subsection{Proof of the ergodic problem. }
\begin{proof}[Proof of Theorem \ref{ergo3}]
\noindent{\it Step 1. Ergodic approximation.}
We consider the ergodic approximation to \eqref{HJEista}:
for  $\l \in (0,1)$, let  $v^\l=(v_1^\l,\dots,v^\l_m)$ be the solution  of
\begin{equation}\label{HJLm}
 \l v_i+ H_i(x,Dv_i)+\sum_{j=1}^{m}d_{ij}(x)v_j=0
\qquad x\in\T^N, \ 1\leq i\leq m.
\end{equation}

\begin{lem} {\rm (\cite[Lemma 4.1]{clln12})}
There exist a unique solution $v^\l$ of~\eqref{HJLm} and a constant $M>0$ independent of $\l$ such that
\begin{eqnarray}\label{est}
0\leq v_i^\l\leq \frac{M}{\l}
\quad {and} \quad
||Dv_i^\l(.)||_{\infty}\leq M,
\qquad i=1,\dots,m.
\end{eqnarray}
\end{lem}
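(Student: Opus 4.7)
My plan is to prove the four assertions of the lemma---uniqueness, existence, $L^\infty$ bound, and Lipschitz bound---by standard viscosity-theoretic tools for weakly coupled systems, with only the last genuinely delicate because of the $x$-dependence of the coupling.

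First, uniqueness and the $L^\infty$ bound I would handle by comparison. If $u$ is a subsolution and $\tilde u$ a supersolution of \eqref{HJLm} with $\max_i \sup_{\T^N}(u_i-\tilde u_i) > 0$, pick $i^*, \bar x$ realizing this positive maximum, double the $x$-variable, and subtract the resulting viscosity inequalities: the $H_{i^*}$ contributions cancel, the monotonicity of $D(\bar x)$ combined with the maximality of $u_{i^*} - \tilde u_{i^*}$ at $(i^*, \bar x)$ makes the coupling difference $\sum_j d_{i^*j}(\bar x)(u_j - \tilde u_j)(\bar x)$ nonpositive, and the strictly positive term $\lambda(u_{i^*} - \tilde u_{i^*})(\bar x)$ produces a contradiction. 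Setting $M_0 := \max_i \|H_i(\cdot,0)\|_\infty$ and using that $(1,\ldots,1) \in \ker D(x)$ for every $x$ by Lemma~\ref{lem-rang}, the constant vectors $\pm M_0/\lambda$ are respectively an admissible supersolution and subsolution of \eqref{HJLm}, so comparison yields $|v_i^\lambda| \leq M_0/\lambda$; the nonnegativity $v_i^\lambda \geq 0$ follows after a harmless preliminary normalization of each $H_i$ by an additive constant ensuring $H_i(\cdot,0) \leq 0$ (so that the zero vector becomes a subsolution). Once these barriers are in hand, existence of a continuous solution follows from Perron's method adapted to monotone systems.

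The hard part will be the uniform gradient bound $\|Dv_i^\lambda\|_\infty \leq M$. I would use the classical duplication-of-variables argument: for $L > 0$ to be chosen large, consider
\[
\Phi_L(x,y) := \max_{1 \leq i \leq m} \bigl[ v_i^\lambda(x) - v_i^\lambda(y) \bigr] - L\,|x-y|,
\]
assume $\Phi_L$ attains a strictly positive maximum at $i^*$ and $(\bar x, \bar y)$ with $\bar x \neq \bar y$, and write the sub/super viscosity inequalities for $v_{i^*}^\lambda$ at these two points with momentum $L(\bar x - \bar y)/|\bar x - \bar y|$. Subtracting and rewriting the coupling defect as
\[
\sum_j d_{i^*j}(\bar x)\bigl(v_j^\lambda(\bar x) - v_j^\lambda(\bar y)\bigr) + \sum_j \bigl(d_{i^*j}(\bar x) - d_{i^*j}(\bar y)\bigr)\bigl(v_j^\lambda(\bar y) - v_{i^*}^\lambda(\bar y)\bigr)
\]
(using $\sum_j d_{i^*j}(\cdot) \equiv 0$ to subtract off $v_{i^*}^\lambda(\bar y)$ in the second sum), the coercivity of $H_{i^*}$ in $p$ absorbs the $x$-modulus of $H_{i^*}$ and contradicts $L$ large, provided both sums are $O(|\bar x - \bar y|)$. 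The first sum is controlled by the sign conditions on $D(\bar x)$ together with the maximality of $i^*$ at $(\bar x, \bar y)$; the second sum is the genuine obstacle caused by the $x$-dependence of $D$, because the crude bound $|v_j^\lambda| \leq M_0/\lambda$ blows up with $\lambda$ and does not directly control the pointwise oscillation $v_j^\lambda(\bar y) - v_{i^*}^\lambda(\bar y)$. To break this circle I would follow the hint in the paragraph preceding the statement and establish a uniform-in-$\lambda$ oscillation estimate for $w_i^\lambda := v_i^\lambda - v_1^\lambda(x_0)$ as a separate step (the $w_i^\lambda$ solve the same system up to the bounded additive constant $-\lambda v_1^\lambda(x_0)$, since $(1,\ldots,1) \in \ker D(x)$, so no spurious first-order terms appear); feeding this bound into the duplication argument then closes the loop and delivers the Lipschitz estimate $\|Dv_i^\lambda\|_\infty \leq M$ uniformly in $\lambda$.
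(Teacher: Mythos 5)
First, note that the lemma you set out to prove is only \emph{cited} here from \cite[Lemma 4.1]{clln12}; the present paper does not prove it, so there is no in-paper proof to compare against. Your comparison argument for uniqueness and the $L^\infty$ bound is essentially standard (with the minor slip that the coupling difference $\sum_j d_{i^*j}(\bar x)(u_j-\tilde u_j)(\bar x)$ at the maximizing index $i^*$ is \emph{nonnegative}, not nonpositive --- it is the sum of this nonnegative term with the strictly positive term $\lambda(u_{i^*}-\tilde u_{i^*})(\bar x)$ being $\le 0$ that yields the contradiction).

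The genuine gap is in the gradient-bound step, and it is a circularity. You correctly identify the obstruction term $\sum_j(d_{i^*j}(\bar x)-d_{i^*j}(\bar y))(v_j^\lambda(\bar y)-v_{i^*}^\lambda(\bar y))$ created by the $x$-dependence of $D$, and you propose to dispose of it by \emph{first} establishing the uniform oscillation estimate $|v_i^\lambda-v_1^\lambda(x_0)|\le C$ and \emph{then} feeding it into the duplication argument. But in the Appendix, Step~4 of the proof of Theorem~\ref{ergo3} derives exactly this oscillation estimate~\eqref{keyes} \emph{from}~\eqref{est}: the gradient bound is used to get $|w_1|\le M$ and to bound $|H_i(x,Dw_i)|$ on a bounded set, which is what makes $|\sum_{j\ge 2}d_{ij}w_j|\le C$ and starts the induction there. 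So the paper's logical order is: gradient bound (imported from \cite{clln12}) $\Rightarrow$ oscillation bound $\Rightarrow$ Ascoli. Your plan inverts it without supplying any independent route to the oscillation bound. The sentence you read as a "hint" in fact makes the opposite point: it says the gradient bound does not \emph{trivially} give $|v_i^\lambda-v_1^\lambda(x_0)|\le C$ (different indices!), yet it is an essential input to Step~4. It is also worth knowing that in \cite{clln12} the coupling $D$ is constant, so your problematic term vanishes identically and the duplication argument closes directly with no circularity; the present paper applies the cited lemma with $x$-dependent $D(\cdot)$, an extension it does not spell out, and a self-contained proof in that generality would require a genuinely new device for the gradient bound rather than a reordering of the steps.
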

\noindent{\it Step 2. Some uniform bounds.} We will prove at Step 4 that there exists a constant $M$ such that for all $x \in \T^N,~~i =1,\dots,m$, we have
\begin{eqnarray}\label{keyes}
|v_i^\l(x)-v_1^\l(x^*)|\le M~~\text{for any fixed $x^* \in \T^N$.}
\end{eqnarray}
From Ascoli's theorem, there exist
$c=(c_1,\dots,c_m)\in\R^m$ and $v\in C(\T^N)^m$
such that, up to subsequences, for $i=1,\dots,m$
\begin{eqnarray*}
\l v_i^\l(x^*)\to -c_i
\quad {\rm and} \quad
v^\l_i-v^\l_1(x^*)\to v_i \ {\rm in} \ C(\T^N)
~~\ {\rm as} \ \l\to 0.
\end{eqnarray*}
Notice that $c_i$ does not depend on the choice of $x^*$ since, for any $x^*,y^*\in\T^N$
\begin{eqnarray}\label{c-indep-x}
|-\l v_i^\l(x^*)+\l v_i^\l(y^*)|\leq \l M |x^*-y^*|\to 0~~{\rm as} \ \l\to 0.
\end{eqnarray}
Moreover, multiplying~\eqref{HJLm} by $\l$ for all $i$ and sending $\l\to 0,$
we obtain $-\sum_j d_{ij}(x)c_i=0$ which gives $D(x)c=0$ and therefore $c\in {\rm ker}\,D(x)$ for all $x\in \T^N$.

\noindent{\it Step 3. Stability result for viscosity solutions and conclusion.}
We rewrite \eqref{HJLm} as
\begin{eqnarray}\label{eq432}
&&\l v^\l_i+  H_i(x,D(v^\l_i-v^\l_1(x^*)))+\sum_{j=1}^m d_{ij}(v^\l_j-v^\l_1(x^*))=0
\quad {\rm in} \ \T^N,
\end{eqnarray}
by noting that $\sum_{j=1}^m d_{ij}(x)v^\l_1(x^*)=0$ for all $x \in \T^N$ thanks to \eqref{S3}.

We then use the stability result and pass to the limit in~\eqref{eq432} to get
\begin{eqnarray}
H_i(x,Dv_i)+\sum_{j=1}^m d_{ij}v_j(x)=c_i
\quad {\rm in} \ \T^N, \ 1\leq i\leq m.
\end{eqnarray}
Then $(c,v(\cdot))$ is solution to~\eqref{HJEista}. The function $v$ depends on $x^*$ but $c$ does not.

From Lemma~\ref{lem-rang}, the kernel of $D$ is spanned by $(1,\dots,1).$
Thus, any $c\in {\rm ker}\,D$ has the form $(c_1,\dots,c_1).$
The proof of uniqueness of $c$ is classical and can be found in \cite{clln12}.

\noindent{\it Step 4. Proof of \eqref{keyes}}.
We set 
\begin{eqnarray*}
w_i(x)=w_i^\l(x)=v_i^\l(x)-v_1^\l(x^*).
\end{eqnarray*} 
Thanks to~\eqref{est}, obviously $|w_1|\leq M.$
Using \eqref{est} again for \eqref{eq432}, we have
\begin{eqnarray*}
|\sum_{j=2}^m d_{ij}(x)w_j(x)|\le C ~~\text{for all $i=1,\dots,m$ and $x \in \T^N$},
\end{eqnarray*} 
where $C$ is independent of $x$ and $\lambda$.
For any $i \ge 2$, we have
\begin{eqnarray*}
d_{ii}|w_i|&=&|\sum_{j=2}^m d_{ij}w_j-\sum_{j=2,~~j \neq i}^m d_{ij}w_j|\le C+|\sum_{j=2,~~j \neq i}^m d_{ij}w_j|\le C-\sum_{j=2,~~j \neq i}^m d_{ij}|w_j|,
\end{eqnarray*} 
i.e.,
\begin{eqnarray}\label{est3}
\sum_{j=2}^m d_{ij}|w_j|\le C~~\text{for any $i \ge 2$}.
\end{eqnarray}
Call $(\Lambda_1,\dots,\Lambda_m)$ the function given by Lemma \ref{lem-rang}. We have
\begin{eqnarray*}
-\sum_{j=2}^m|w_j|d_{1j}\Lambda_1=
\sum_{j=2}^m|w_j|(\sum_{i=2}^m d_{ij}\Lambda_i)=\sum_{i=2}^m\Lambda_i\sum_{j=2}^m d_{ij}|w_j|\le C(\sum_{i=2}^m\Lambda_i).
\end{eqnarray*}
Assume $|w_2|=\min_{j \ge 2}|w_j|$, we have
\begin{eqnarray*}
d_{11}|w_2|=-\sum_{j=2}^md_{1j}|w_2|\le -\sum_{j=2}^m|w_j|d_{1j} 
\le C\frac{\sum_{i=2}^m\Lambda_i}{\Lambda_1}.
\end{eqnarray*} 

Thanks to \eqref{irr}, we have $d_{11}>0$. Using the compactness of $\T^N$ and continuity of the coupling, there exists $\delta_0>0$ such that $d_{11}(x)\ge \delta_0$ for all $x \in \T^N$. Therefore, we have
\begin{eqnarray*}
|w_2|\le C\frac{\sum_{i=2}^m\Lambda_i}{\Lambda_1 \delta_0}.
\end{eqnarray*} 
We finish the proof by a reduction argument, i.e., we assume that
\begin{eqnarray*}
\sum_{j=k}^m d_{ij}|w_j|\le C~~\text{for any $3 \le k \le m$ and $|w_l| \le C$ for $1 \le l \le k-1$},
\end{eqnarray*}
and we will show that $|w_k| \le C'$. By similar arguments like those to 
obtain the bound for $|w_2|$, we then assume that $|w_k|=\min_{j \ge k}|w_j|$.
We have
\begin{eqnarray*}
(\sum_{i=1}^{k-1}\Lambda_i\sum_{j=1}^{k-1}d_{ij})|w_k|=(-\sum_{j=k}^m\sum_{i=1}^{k-1}d_{ij}\Lambda_i)|w_k|\le-\sum_{j=k}^m|w_j|\sum_{i=1}^{k-1}d_{ij}\Lambda_i\le C(\sum_{i=k}^m\Lambda_i).
\end{eqnarray*} 
If $\sum_{i=1}^{k-1}\Lambda_i\sum_{j=1}^{k-1}d_{ij}(x)>0$ for all $x \in \T^N$, the conclusion follows easily by the compactness of $\T^N$ and 
the continuity of the coupling. We then assume by contradiction that $\sum_{i=1}^{k-1}\Lambda_i\sum_{j=1}^{k-1}d_{ij}(x_0)=0$ for some $x_0 \in \T^N$, \eqref{S3} yields $d_{ij}(x_0)=0$ for all $1 \le i \le k-1,~~k \le j \le m$. 
We get a contradiction with the choice
 $\mathcal{I}=\{1,\dots,k-1\}$ in \eqref{irr}.
\end{proof}

\subsection{Proof of Lemma \ref{samelimit}}
This proof is a modified version of the one in \cite{mt12} so that it can be adapted to general systems which is a little more tricky.

\noindent{\it Step 1. Some first estimates.} Thanks to \eqref{nonzero}, we have
\begin{eqnarray*}
\delta=\min_{x \in \T^N,~~i,j=1,\dots,m,~~\mathcal{I} \subset \{1,\dots,m\}}~~-[\sum_{k \in \mathcal{I}}d_{ik}(x)+\sum_{k \in \mathcal{I}^c}d_{jk}(x)]>0. 
\end{eqnarray*}
where $\mathcal{I}$ contains $j$ but not $i$.

Set $\Phi(t)=\max_{i \neq j,~~x \in \mathbb{T}^N}[u_i(x,t)-u_j(x,t)]\ge 0$ for each $t>0$. Our purpose is to prove that $\Phi$ is a subsolution to the equation
\begin{eqnarray} 
\Phi'(t)+\delta \Phi(t)=0.\label{ordina}
\end{eqnarray}
Assume without loss of generality that $\Phi(t)=u_1(x_0,t)-u_2(x_0,t)$ and all functions are smooth to do a formal proof. It can be done rigorously by approximation techniques.

We have $\Phi'(t)=\frac{\partial u_1}{\partial t}(x_0,t)-\frac{\partial u_2}{\partial t}(x_0,t),~~Du_1(x_0,t)=Du_2(x_0,t)$.
Subtracting two first equations in \eqref{HJEi}, we have 
\begin{eqnarray*}
\Phi'(t)+\sum_{j=1}^{m}d_{1j}(x_0)u_j(x_0,t)-\sum_{j=1}^{m}d_{2j}(x_0)u_j(x_0,t)=0.\\
\end{eqnarray*}
To obtain the conclusion, we only need to prove that 
\begin{eqnarray*}
\sum_{j=1}^{m}[d_{1j}(x_0)-d_{2j}(x_0)]u_j(x_0,t) \ge \delta(u_1(x_0,t)-u_2(x_0,t)) 
\end{eqnarray*}\\
or
\begin{eqnarray}\label{inequa}
(d_{11}-d_{21}-\delta)u_1 \ge (d_{22}-d_{12}-\delta)u_2+\sum_{j=3}^{m}(d_{2j}-d_{1j})u_j.
\end{eqnarray}
At the point $(x_0,t)$, we have 
\begin{eqnarray}
u_1 \ge u_3,...,u_m \ge u_2,~~d_{11}-d_{21}-\delta=d_{22}-d_{12}-\delta+\sum_{j=3}^{m}(d_{2j}-d_{1j})\label{stric}
\end{eqnarray}
but the signs of $d_{2j}-d_{1j}, j \ge 3$ are unknown. 

\noindent{\it Step 2. Separate the signs of $d_{2j}-d_{1j}, j \ge 3$ .} Let $J^+$ be the set consisting of all $j\in \{3,...,m\}$ such that $d_{2j}-d_{1j} \ge 0$ and $J^-:=\{3,...,m\}-J^+$. We rewrite \eqref{inequa} as
\begin{eqnarray*}
\quad(d_{11}-d_{21}-\delta)u_1-\sum_{j \in J^+}(d_{2j}-d_{1j})u_j \ge (d_{22}-d_{12}-\delta)u_2+\sum_{j \in J^-}(d_{2j}-d_{1j})u_j 
\end{eqnarray*}
Actually, we can prove a stronger inequality
\begin{eqnarray*}
\quad(d_{11}-d_{21}-\delta)u_1-\sum_{j \in J^+}(d_{2j}-d_{1j})u_1 \ge (d_{22}-d_{12}-\delta)u_2+\sum_{j \in J^-}(d_{2j}-d_{1j})u_2 
\end{eqnarray*}
It is clear by \eqref{stric} that 
\begin{eqnarray*}
d_{11}-d_{21}-\delta-\sum_{j \in J^+}(d_{2j}-d_{1j})=d_{22}-d_{12}-\delta +\sum_{j \in J^-}(d_{2j}-d_{1j})
\end{eqnarray*}
From this equality and $u_1 \ge u_2$, we only need to prove that 
\begin{eqnarray*}
d_{11}-d_{21}-\delta-\sum_{j \in J^+}(d_{2j}-d_{1j}) \ge 0.
\end{eqnarray*}
This is true by the definition of $\delta$.
\begin{eqnarray*}
d_{11}-d_{21}-\sum_{j \in J^+}(d_{2j}-d_{1j})= -[d_{12}+d_{21}+\sum_{j \in J^+}d_{2j}+\sum_{j \in J^-}d_{1j}] \ge \delta.
\end{eqnarray*}

Since $\Phi(0)e^{-\delta t}$ is a supersolution of \eqref{ordina} with the initial value $\Phi(0)$, the comparison principle yields $0 \le \Phi(t) \le \Phi(0)e^{-\delta t}$ for all $t$. Therefore, $\Phi(t)$ converges to 0 as $t \to \infty$.

\end{document}